\documentclass[11pt]{amsart}
\usepackage{amsmath,amssymb,amsbsy,amsfonts,amsthm,latexsym,amsopn,amstext,amsxtra,epic, euscript,amscd,indentfirst, verbatim}
\usepackage{hyperref}
\usepackage{graphicx}
\setlength{\unitlength}{12pt}
\usepackage[margin=1.4in]{geometry}

\usepackage{tikz,xcolor}

\usetikzlibrary{arrows,decorations.pathmorphing,backgrounds,fit,automata,calc}
\usetikzlibrary{positioning}

\newtheorem{theorem}{Theorem}
\newtheorem*{theorem*}{Theorem}
\newtheorem*{lemma*}{Lemma}
\newtheorem{conjecture}[theorem]{Conjecture}

\newtheorem{lemma}[theorem]{Lemma}
\newtheorem{cor}[theorem]{Corollary}
\newtheorem*{obs*}{Observation}

\newcommand{\comments}[1]{} 
\newcommand{\appropto}{\mathrel{\vcenter{
  \offinterlineskip\halign{\hfil$##$\cr
    \propto\cr\noalign{\kern2pt}\sim\cr\noalign{\kern-2pt}}}}}


\def\R{\mathbb R}

\def\dt{\frac{d}{dt}}
\def\twodt{\frac{d^2}{dt^2}}


\begin{document}

\title[On Hall's conjecture]{Hall's Conjecture on Extremal Sets \\ for Random Triangles}
 
\author[University of Michigan]{Gabriel Khan} 
 
\email{gabekhan@umich.edu}
 
\date{\today}
 
\maketitle

\begin{abstract}
In this paper we partially resolve Hall's conjecture~\cite{GH} on random triangles.  We consider the probability that three points chosen uniformly at random from a bounded convex region of the plane form an acute triangle. Hall's conjecture is states that this probability is maximized by the disk. This can be interpreted as a probabilistic version of the isoperimetric inequality.
 
We first prove that the disk is a weak local maximum among bounded domains in $\R^2$ and that the ball is a weak local maximum in $\R^3$. In $\R^2$, we then prove a local $C^{2,\frac{1}{2}}$-type estimate on the probability in the Hausdorff topology. This enables us to prove that the disk is a strong local maximum in the Gromov-Hausdorff topology (modulo congruences).
Finally, we give an explicit upper bound on the isoperimetric ratio for a region which maximizes the probability and show how this reduces verifying the full conjecture to a finite, though currently intractable, calculation.
\end{abstract}



\section{What is the probability that a randomly chosen triangle is acute? }

A classic problem in geometric probability, dating back to the 19th century, is determining the shape of a random triangle. This problem was popularized by Charles Dodgson (better known as Lewis Carroll) in his lesser known book  \emph{Pillow Problems}~\cite{LC}, when he asked for the probability that a random triangle is acute. 

After some initial thought, it becomes clear that the answer depends on the definition of ``random" triangles. In his book, Carroll assumed that the longest side of the triangle is a known line segment, and that the third vertex is chosen uniformly at random given that segment. Others have proposed different methods for choosing random triangles; such as randomly choosing the side lengths or angles or in terms or in terms of a measure on the moduli space of triangles (see~\cite{GR},~\cite{ES},~\cite{CNSS}). Stephen Portnoy~\cite{SP} wrote a good overview of the history and explained how many of the natural methods for choosing random triangles have unsettling properties.

For our work, we consider random triangle defined by choosing the three vertices at random. This approach was first considered by W.S.B. Woolhouse~\cite{WW} in 1861. He claimed that the probability of choosing an acute triangle was $(4\pi^{-2} -1/8)$, but did not specify the probability distribution from which he picked the points. Consequently, the problem was ill-posed~\cite{ES}. However if the three vertices are chosen uniformly at random from a disk, then the answer is indeed correct.
Other people have studied the question when the vertices are chosen using different distributions. Explicit calculations have been done in the following cases:
\begin{enumerate}
\item The vertices are chosen from a Gaussian distribution (in which case the probability is exactly $1/4$)~ \cite{KM}.
\item The vertices are chosen uniformly at random in a rectangle~\cite{EL}.
\item The vertices are chosen uniformly at random in a triangle~\cite{VA}.
\end{enumerate}

\subsection{Hall's conjecture}

  In 1982, Glen Hall ~\cite{GH} computed the probability that three points chosen uniformly at random from the $n$-ball form an acute triangle. He did so by applying Baddeley's generalization of Crofton's differential equation \cite{AB} to simplify the relevant integrals.
He also observed that the $n$-ball is a critical point for this probability and that as $n$ increases, the probability of choosing an acute triangle increases, converging to 1 in the limit. In light of this, he conjectured among convex domains in $\mathbb{R}^n$, the probability that three randomly chosen points form an acute triangle is maximized when the domain is the $n$-ball. 
 
Heuristically, Hall's conjecture seems very likely to be true. In order for a domain to maximize the probability, the first variation of the probability must vanish. From Hall's work, it appears that this can only occur when the domain has $SO(n)$-symmetry. Furthermore, increasing the dimension increases the probability, so there is no apparent degeneracy (such as the probability being maximized on a lower dimensional subset).  Nevertheless, the conjecture remains open since translating statements about probability into geometric data is a difficult problem. 
  
 
   \subsection{Our results}
 
 
 Before stating the results precisely, we introduce our notation and conventions on local extrema.
  We will typically denote a convex region in $\R^n$ by $S$. When we are considering a 1-parameter family of such regions, with parameter $t$, we will denote it by $S(t)$. In this context $S(0)$ will denote the $n$-ball centered at the origin. We now consider the triangle $\triangle T$ obtained by choosing three vertices from $S$ i.i.d. uniformly. We will use $p(S)$ to denote the probability that $\triangle T$ is an acute triangle, and we use $F_S(\phi)$ to denote the probability that the largest angle of $\triangle T$ is less than or equal to $\phi$.

For this work, we distinguish between local extrema and weak local extrema. Following the convention of \cite{GF}, we say that the functional $p(S)$ has a weak local maximum for $S = S_0$ if there exists an $\epsilon > 0$ such that $p(S)-p(S_0) < 0$ for all $S$ in the domain of $p$ which satisfy the $d_1( S, S_0)  < \epsilon$, where $d_1$ corresponds to distance in the $C^1$ norm. 
   
   We also consider the extrema in the $C^0$ topology, which we call local extrema. We say that the functional $p(S)$ has a local maximum for $S = S_0$ if there exists an $\epsilon > 0$ such that $p(S)-p(S_0) < 0$ for all $S$ in the domain of $p$ which satisfy $d_{H.}( S, S_0)  < \epsilon$. Here, $d_{H.}$ is the Hausdorff distance, which yields a coarser topology. 

With these preliminaries covered, we can introduce our main results.
 
 \begin{theorem}
Among bounded convex domains in $\R^2$, the disk $D$ is a weak local maximum for $p$.
 \end{theorem}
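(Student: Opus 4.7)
First I parametrize a $C^1$-small family of convex regions $S(t)$ with $S(0) = D$ by writing the boundary in polar coordinates as $r(\theta, t) = 1 + t h(\theta) + O(t^2)$, where $h \in C^1(S^1)$ has small $C^1$ norm. Since $p$ is invariant under translations, rotations, and dilations, perturbations with $h \in \mathrm{span}\{1,\cos\theta,\sin\theta\}$ leave $p$ unchanged, so I restrict to $h$ that is $L^2$-orthogonal to this span, i.e.\ $h(\theta) = \sum_{k\geq 2}(a_k\cos k\theta + b_k\sin k\theta)$. The plan is then to show the second variation of $p$ along every such admissible perturbation is strictly negative, and to integrate this along the family.

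Next I use Thales' theorem: a non-degenerate triangle is non-acute iff exactly one of its vertices lies in the closed disk whose diameter is the opposite side. Together with exchangeability of the three vertices this gives
$$1 - p(S) = \frac{3}{|S|^3}\int_{S\times S} m\bigl(S \cap D_{BC}\bigr)\, dB\, dC,$$
where $D_{BC}$ is the closed Thales disk on $BC$ and $m$ is Lebesgue area. Applying Baddeley's generalization of Crofton's differential equation to this ratio expresses the derivatives of $p(S(t))$ as boundary integrals over $\partial D$ with kernels depending on $h$ and $\dot h$. Rotational symmetry of $D$ forces the first-order Crofton kernel to be constant along $\partial D$, so the first variation vanishes identically on admissible $h$; this recovers Hall's observation that $D$ is a critical point of $p$.

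The second variation then yields a symmetric quadratic form $Q(h)$ on the admissible perturbations. Because $Q$ commutes with the $SO(2)$-action on Fourier coefficients, it diagonalizes as $Q(h) = \sum_{k\geq 2}\lambda_k(a_k^2 + b_k^2)$, reducing everything to showing $\lambda_k < 0$ for each $k \geq 2$. Each $\lambda_k$ can be written as an explicit integral over two boundary points $B, C \in \partial D$ and one interior point $A \in D$, paired against $\cos(k\theta)$ and tracking how Thales-disk membership responds as vertices slide along the perturbed boundary. The main obstacle will be carrying out this evaluation cleanly enough to establish negativity uniformly in $k$: the low modes, especially $k=2$ (the ellipsoidal direction), must be handled by direct computation, while for large $k$ one needs a quantitative lower bound on $|\lambda_k|$ to rule out positivity or vanishing. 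Once negativity of all $\lambda_k$ is established, $Q(h) < 0$ for every admissible nonzero $h$, and integrating along the family yields $p(S(t)) - p(D) < 0$ for $0 < |t| < \epsilon$, which proves the theorem.
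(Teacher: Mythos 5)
Your setup is essentially the paper's: parametrize the boundary in polar coordinates, observe that the first variation vanishes by rotational symmetry plus the volume constraint, and diagonalize the second variation over Fourier modes so that everything reduces to showing $\lambda_k<0$ for $k\geq 2$. Your Thales-disk reformulation of $1-p(S)$ is correct and is the same geometric picture the paper uses (the kernel $A_2(\theta)$ there is exactly the area of the unit disk lying outside the Thales disk on $BC$ and between the two perpendiculars). But the proof stops exactly where the theorem begins: you state that establishing $\lambda_k<0$ for every $k\geq 2$, uniformly, is ``the main obstacle'' and leave it there. That negativity is the entire content of the result, and it is not something one can wave at --- a priori some $\lambda_k$ could be positive or could accumulate at $0$, which would destroy both weak local maximality and the strong negativity needed later in the paper.

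Two concrete things are missing. First, the eigenvalue is not simply the pairing of $\cos(k\theta)$ against the Thales-area kernel: the second variation contains, in addition to the ``two points on the boundary'' autocorrelation term, a singular term coming from the distributional derivative of the indicator function (one boundary vertex crossing the right-angle locus), and this contributes a $k$-independent constant $-L$ to every mode, so that $\lambda_k=a_k-L$. The paper evaluates $L$ without computing any hard integral by noting that $\mu=\cos\theta-t\sin^2\theta/\sqrt{1-t^2\sin^2\theta}$ is an exact translation, forcing the second variation to vanish on the $k=1$ mode and hence $L=a_1$ up to normalization; your proposal does not account for this term at all, and without it the mode-by-mode computation would give the wrong answer. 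Second, the paper proves the sign pattern by computing the Fourier coefficients of $A_2(\theta)$ in closed form, $a_n=\frac{1}{2\pi}\frac{(-1)^n(-2-4n^2+(2-8n^2)(-1)^n)}{2n^2(n^2-1)}$ for $n\geq 2$, from which $a_n<0$ and $a_n-L\to -L<0$, giving the uniform bound $\sum_{k\geq 2}(a_k-L)(c_k^2+d_k^2)<-L\|\mu\|_{L^2}^2$. You would need to supply both the correct identification of the quadratic form (including the $-L$ shift) and this explicit computation, or an equivalent argument, before the proposal constitutes a proof.
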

More precisely, we show that the second variation of the probability is strongly negative at $D$. An immediate consequence is that given a $C^2$ one-parameter family of convex regions $S(t)$ such that $S(0)$ is the disk and whose first variation modulo congruences is non-zero, $p(S(t))$ has a local maximum at 0.  We also prove the corresponding result in $\R^3$. 

    \begin{theorem}
  Among bounded convex domains in $\R^3$, the ball $B$ is a weak local maximum for $p$. 
\end{theorem}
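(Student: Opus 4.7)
The plan is to mirror the second-variation argument established for Theorem 1, adapted to three dimensions. Since $p$ is invariant under rigid motions and dilations, I parametrize convex bodies near $B$ by a smooth normal perturbation $h : S^2 \to \R$, writing $S(t)$ as the body with radial function $1 + t\, h(\omega) + O(t^2)$. After gauge-fixing translations and scale, I may restrict to $h$ that is $L^2(S^2)$-orthogonal to the spherical harmonics of degrees $0$ and $1$. The ball is already a critical point: the first variation $\delta p(B)[\cdot]$ is an $SO(3)$-invariant linear functional on such $h$ and therefore vanishes, recovering Hall's observation. Writing
\[
p(S(t)) = p(B) + \tfrac{t^2}{2}\, Q(h,h) + o(t^2),
\]
the theorem reduces to a coercive upper bound $Q(h,h) \leq -c\,\|h\|^2$ in a norm controlled by $\|h\|_{C^1}$, together with a standard remainder estimate.

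Next I would use Baddeley's generalization of Crofton's differential equation (as Hall did) to express $p$ and its second variation as iterated boundary integrals over $S^2$. Because $Q$ is $SO(3)$-invariant, the spherical-harmonic decomposition $h = \sum_{\ell \geq 2} h_\ell$ diagonalizes it, yielding
\[
Q(h,h) = \sum_{\ell \geq 2} \lambda_\ell\, \|h_\ell\|_{L^2(S^2)}^2.
\]
Each eigenvalue $\lambda_\ell$ is extracted by Funk-Hecke integration of a rotationally invariant geometric kernel $K(\omega_1 \cdot \omega_2)$ against the Legendre polynomial $P_\ell$. The kernel $K$ encodes the first-order change in the measure of right triangles with vertices on $\partial B$ under the boundary flow generated by $h$.

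The core task is then to show $\lambda_\ell \leq -c < 0$ for all $\ell \geq 2$. For small $\ell$, in particular the $\ell = 2$ ellipsoidal modes, I would compute $\lambda_\ell$ explicitly; this is the borderline case, analogous to the $\ell = 2$ modes for the disk in Theorem 1. For large $\ell$, the sign and size of $\lambda_\ell$ are controlled by the oscillation-cancellation inherent in Legendre expansions, combined with monotonicity and regularity properties of $K$ inherited from its planar analogue. The principal obstacle is the explicit identification and sign analysis of $K$ in $\R^3$: the three-point configuration space is six-dimensional, and the Crofton reduction introduces additional layers of integration compared to the planar case, making the low-$\ell$ calculation considerably more delicate. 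Once $K$ is in hand and the spectral estimates are completed, coercivity of $Q$ (modulo the gauge directions) yields the weak local maximum in the $C^1$ topology.
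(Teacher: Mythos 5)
Your outline follows the same strategy as the paper: gauge-fix translations and dilations, expand the perturbation in spherical harmonics, use $SO(3)$-invariance to diagonalize the second variation, and reduce to the signs of the Legendre coefficients of a zonal kernel. But the proposal stops exactly where the proof begins. The entire content of the theorem is the claim $\lambda_\ell \leq -c < 0$ for all $\ell \geq 2$, and you do not establish it: for large $\ell$ you appeal to ``oscillation-cancellation inherent in Legendre expansions, combined with monotonicity and regularity properties of $K$,'' which is not an argument --- smooth zonal kernels generically have Legendre coefficients of both signs, and nothing you say rules that out. The paper resolves this by writing down the kernel explicitly ($A_3(\theta)$, the mass of acute triangles given two boundary points at angular separation $\theta$, taken from Hall's formula) and computing $\int_0^\pi A_3(\theta)P_n(\cos\theta)\sin\theta\,d\theta$ in closed form as an explicit rational-in-$n$ expression, from which the signs ($>0$ for $n=0,1$, $\leq 0$ for $n\geq 2$) are read off directly. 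Without that computation, or some substitute for it, there is no proof.

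A second, related gap: you need not merely $\lambda_\ell < 0$ but a uniform gap $\lambda_\ell \leq -c$, and you do not say where $c$ comes from. In the paper the second variation splits into several terms; the distributional $\frac{\partial f}{\partial r}$ term (the locus of right triangles crossing the moving boundary) contributes a constant $-L$ to \emph{every} mode, and $L$ is pinned down without computing any new integrals by the observation that a rigid translation of the ball (realized by a degree-one perturbation) must have vanishing second variation, so $-L$ exactly cancels the positive $\ell = 1$ coefficient of $A_3$. The uniform negativity then follows from $a_\ell \leq 0$ for $\ell \geq 2$ together with this constant $-L$. Your single-kernel formulation obscures this structure, and in particular gives you no mechanism for the uniform bound. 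To complete the argument along your lines you would need to (i) separate out the right-triangle boundary term and evaluate its constant contribution (the translation trick is the efficient way), and (ii) actually prove the sign pattern of the Legendre coefficients of $A_3$, which in the paper requires the explicit formula for $A_3$ and a nontrivial symbolic integration.
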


In two dimensions, we are able to show that the same result holds in the stronger sense. 

\begin{theorem}
The disk is a local maximum for $p$ for bounded convex domains in $\R^2$. More precisely, there exists $\epsilon$ such that for any convex region $S$ with $d_{H.}(D,S) < \epsilon$, then $p(S) < p(D)$. 
\end{theorem}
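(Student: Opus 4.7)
The plan is to bootstrap the weak local maximum from Theorem 1 to a Hausdorff-local maximum using the $C^{2,\frac{1}{2}}$-type estimate for $p$ in the Hausdorff topology announced in the introduction. Heuristically, Theorem 1 provides a quadratic decrease $p(S) - p(D) \leq -c \|S - D\|^2$ in a norm that sees first derivatives of the perturbation, while a $C^{2,\frac{1}{2}}$-type estimate provides a Taylor expansion in the Hausdorff distance whose remainder is strictly better than quadratic; combining the two should force $p(S) < p(D)$ throughout a genuine Hausdorff neighborhood of $D$.

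Concretely, I would parameterize convex regions near $D$ by their support functions, writing $h_S = 1 + u$ with $u : S^1 \to \R$, so that $d_{H.}(S,D) = \|u\|_\infty$ and convexity becomes the distributional inequality $1 + u + u'' \geq 0$. I would then introduce a smoothed approximant $S_\delta$ whose support function is the mollification of $h_S$ at angular scale $\delta$. Mollification preserves this one-sided inequality, so $S_\delta$ is a smooth convex region with $\|h_{S_\delta} - 1\|_{C^1} \lesssim \|u\|_\infty/\delta$, while the convexity-driven regularity of $u$ gives quantitative control on $d_{H.}(S, S_\delta)$ as $\delta \to 0$ that is sharper than one would obtain for a generic continuous function. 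Applying Theorem 1 to $S_\delta$ yields $p(S_\delta) - p(D) \leq -c \|u\|^2$ in the appropriate norm (after quotienting by rigid motions, as in Theorem 1), and applying the $C^{2,\frac{1}{2}}$ estimate bounds $|p(S) - p(S_\delta)|$ by a super-quadratic power of $d_{H.}(S, S_\delta)$. Choosing $\delta = \delta(\|u\|_\infty)$ to balance the two errors then gives $p(S) < p(D)$.

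The main obstacle is reconciling the three norms in play: the $L^\infty$ (Hausdorff) norm on $u$, the stronger $C^1$ norm in which Theorem 1 operates, and the intermediate norm controlling the remainder in the $C^{2,\frac{1}{2}}$-type estimate. Using convexity as a substitute for derivative control, so that the super-quadratic Hausdorff remainder is beaten by the quadratic $C^1$ decrease along the smoothed approximation, is the heart of the argument; the construction of $S_\delta$ must be calibrated carefully enough that the negativity gained along the smoothed family is not swamped by the loss incurred in passing from $S$ to $S_\delta$.
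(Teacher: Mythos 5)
There is a genuine gap, in two places.

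First, your argument leans on Theorem 1 to deliver a quantitative inequality $p(S_\delta) - p(D) \leq -c\,\|u\|^2$ for every smooth convex $S_\delta$ in a $C^1$-neighborhood of $D$. Theorem 1 does not say this: it says the second variation of $p$ at $t=0$ is strongly negative, which controls $p$ along a one-parameter family only to second order \emph{at the disk}, with an unquantified remainder. Passing from ``second variation strongly negative at $D$'' to ``$p(S_\delta) \leq p(D) - c\|u\|^2$ for all nearby $S_\delta$'' requires uniform control of the second derivative of $p$ along some path joining $D$ to $S_\delta$ --- and supplying exactly that control is the entire content of the paper's proof of this theorem (the canonical homotopy $S(t) = \{re^{i\theta} : r \leq 1 + \tfrac{t}{\|g\|_2}g(\theta)\}$, the uniform H\"older-$1/2$ estimate on $\tfrac{d^2M}{dt^2}$ along it, and the super-solution $\bar p(t) = p(D) - \tfrac{L}{12\pi^3}t^2$). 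Your plan defers the hard step to a theorem that does not contain it. Relatedly, the normalization ``after quotienting by rigid motions'' is itself nontrivial here: the paper needs a degree-theoretic argument (Lemma 10) to produce an embedding of $S$ killing the zeroth and first Fourier modes of $g$, since dilations and translations do not act on those modes independently.

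Second, the claim that a $C^{2,\frac12}$-type estimate ``bounds $|p(S)-p(S_\delta)|$ by a super-quadratic power of $d_{H.}(S,S_\delta)$'' is false. The estimate in question is a H\"older-$1/2$ bound on the \emph{second derivative} of $M$ along a homotopy; it does not upgrade the modulus of continuity of $p$ itself. As a function of the domain in Hausdorff distance, $p$ is only Lipschitz (moving the boundary by $\eta$ changes the volume, hence $p$, at first order in $\eta$; this is the paper's Lemma 11). So the comparison term $|p(S)-p(S_\delta)|$ is of order $d_{H.}(S,S_\delta)$, not $d_{H.}(S,S_\delta)^{2+\alpha}$, and your balancing of a linear loss against a quadratic gain does not close: you would need $d_{H.}(S,S_\delta) \ll \|u\|_\infty^2$ while simultaneously keeping $S_\delta$ in the (unquantified) regime where the quadratic decrease is valid, and nothing in the proposal establishes this. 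Your observation that convexity forces $\|u'\|_\infty \lesssim \sqrt{\|u\|_\infty}$ is correct and useful, but it does not substitute for the missing uniform second-order control; the paper's route is to avoid mollification entirely and instead prove the H\"older regularity of the second variation directly for non-smooth convex regions along the linear radial homotopy.
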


 In Section 6, we prove that regions that are far from the disk have small probability of choosing acute triangles.
 
 \begin{theorem}
Let $S$ be a convex subset of $\R^2$ whose isoperimetric ratio is greater than $\frac{7688}{15}$. Then $p(S) < p(D)$. Furthermore, if we denote the isoperimetric ratio of $S$ by $R$, then $p(S) \lesssim R^{-1}$.
\end{theorem}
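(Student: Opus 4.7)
The plan is to convert the large-isoperimetric-ratio hypothesis into geometric thinness of $S$ and then bound the acute-triangle probability using this thinness. For the first step, let $d = \operatorname{diam}(S)$ and choose coordinates so that the diameter lies along the $x$-axis; then $S \subseteq [0, d] \times [0, W]$, where $W$ is the width of $S$ perpendicular to the diameter. The two boundary arcs joining the endpoints of the diameter each have length at least $d$, so $L \geq 2d$, and since $S$ is contained in its bounding box, $A \leq dW$. Combining,
\[
R \;=\; \frac{L^2}{A} \;\geq\; \frac{4d^2}{dW} \;=\; \frac{4d}{W},
\]
so $W \leq 4d/R$, and $S$ is very thin when $R$ is large.

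Next I isolate a necessary condition for acuteness. Order the three random vertices by $x$-coordinate as $P_{(1)}, P_{(2)}, P_{(3)}$ and set $a = x_{(2)} - x_{(1)}$, $b = x_{(3)} - x_{(2)}$. A direct computation of the dot product of the two edges meeting at the middle vertex shows that the angle at $P_{(2)}$ is acute if and only if
\[
(y_{(1)} - y_{(2)})(y_{(3)} - y_{(2)}) \;>\; ab.
\]
Since all three $y$-coordinates lie in $[0, W]$, the left-hand side is at most $W^2$, so an acute triangle must in particular satisfy $ab < W^2$.

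To estimate $P(ab < W^2)$, I use that the marginal density of the $x$-coordinate of a uniform point in $S$ is $h(x)/A$, where the slice length satisfies $h(x) \leq W$. Integrating out the $y$-variables, symmetrizing over the $3!$ orderings, and introducing the change of variables $c = x_{(1)}$, $a, b$ gives
\[
P(ab < W^2) \;\leq\; \frac{6 W^3}{A^3} \int_{\substack{a, b \geq 0 \\ a + b \leq d \\ ab \leq W^2}} (d - a - b)\, da\, db \;\leq\; \frac{6 d W^5 \bigl(1 + 2 \log(d/W)\bigr)}{A^3},
\]
where the last step uses the standard area estimate $|\{(a,b) \in [0,d]^2 : ab \leq W^2\}| = W^2 (1 + 2 \log(d/W))$. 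Combining with $A \geq L^2/R \geq 4d^2/R$ and $W \leq 4d/R$ yields
\[
p(S) \;\leq\; \frac{96 \bigl(1 + 2 \log R \bigr)}{R^2} \;\lesssim\; \frac{1}{R},
\]
which is the claimed asymptotic.

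To reach the explicit threshold $R > 7688/15$, the constants must be tracked precisely and the resulting upper bound compared to Woolhouse's value $p(D) = 4/\pi^2 - 1/8$. The hardest part is that the crude bounds $h(x) \leq W$ and $|y_{(i)} - y_{(j)}| \leq W$ are wasteful; getting exactly $7688/15$ likely requires sharpening them---for example, by averaging the middle-vertex analysis over the three choices of ``middle'', by using the true slice profile $h(x)$ in place of its supremum, or by imposing the acuteness condition at more than one vertex simultaneously. These refinements are the main technical work, though the asymptotic statement is already implied by the calculation above.
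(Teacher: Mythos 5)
Your overall strategy is essentially the paper's: both arguments reduce to the observation that in a long thin region the angle at the vertex with the middle $x$-coordinate is forced to be obtuse because the product of the horizontal gaps exceeds the product of the vertical offsets. (The paper discretizes this into $N$ equal-area strips and counts non-adjacent triples; you integrate the criterion $ab<W^2$ directly, which, done correctly, even yields the stronger asymptotic $O(\log R/R^2)$.) However, there is a genuine error at the very first step, and it invalidates the quantitative conclusions. From $R = L^2/A \geq 4d^2/(dW) = 4d/W$ you conclude $W \leq 4d/R$, but this inequality is reversed: $R \geq 4d/W$ is equivalent to $W \geq 4d/R$, which is a \emph{lower} bound on the width and says nothing about thinness. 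The disk itself is a counterexample to your claim: there $W=d$ and $R=4\pi$, so $4d/R = d/\pi < W$. The point is that a lower bound on $R$ combined with $L\geq 2d$ and $A\leq dW$ can never force $S$ to be thin; to get thinness you need the opposite pair of estimates, namely an \emph{upper} bound on the perimeter ($L \leq 2(d+W)$, since $S$ sits inside the $d\times W$ box) and a \emph{lower} bound on the area ($A \geq dW/2$, from the convex hull of the diameter and a width-realizing segment, which is exactly the quadrilateral the paper constructs). These give $R \leq 8(d+W)^2/(dW) \leq 32d/W$, hence $W \leq 32d/R$.

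With that correction your asymptotic claim survives: substituting $W \leq 32d/R$ and $A \geq dW/2$ into your integral bound gives $p(S) = O(\log R / R^2) = O(R^{-1})$, so the second half of the theorem is recoverable. But the explicit threshold does not follow: your constant $96$ in the final display rests on the false inequality $W\leq 4d/R$, and redoing the computation with the corrected constants yields roughly $p(S) \leq 5\cdot 10^4 (1+2\log(R/4))/R^2$, which only beats $p(D) = 4/\pi^2 - 1/8 \approx 0.28$ for $R$ on the order of $1500$, well above $7688/15 \approx 512.5$. You correctly flag that the explicit threshold needs more work, but the needed work is not merely ``tracking constants'' in your existing chain of inequalities --- the chain itself must first be repaired, and then substantially sharpened (or replaced by the paper's discrete strip-counting argument, which is engineered to produce exactly the bound $R \leq 8(1+2\bar h)^2/\bar h$ with $\bar h = 1/60$, i.e.\ $7688/15$).
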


Finally, in Section 7, we discuss how, given Theorems 3 and 4, the proof of Hall's original conjecture is reduced to a finite computation. Unfortunately, at this point the computations are infeasible. 
With care, it might be possible to reduce the problem to the point that a computer can check the remaining cases and so prove the full conjecture.

\subsection{A discussion of the proofs}

Our approach is variational, but is directly inspired by Baddeley's generalization of Crofton's differential equation \cite{AB}.  Crofton's differential equation was first derived by Morgan Crofton in 1885. It was originally used as a tool to simplify complicated geometric integrals that appear in geometric probability. However, it is also a prototype for modern variational techniques in geometric probability. 
    
    In Section 2, we prove Theorem 1. To do so, we use calculus of variations to express the second variation of $p(S)$ along some deformation of $D$. In order to compute this variation explicitly, we use the Fourier transform of the deformation of $S$. When we do so, a term appears that integrates the autocorrelation of the deformation against a particular function $A_2(\theta)$. The Fourier coefficients of $A_2(\theta)$ display an interesting pattern that, when combined with the Plancherel theorem and correlation theorem, forces the second variation to be strongly negative.

 Section 3 contains the proof of Theorem 2, which is similar to that of Theorem 1 but more technical. We again decompose the deformation, except we must do so in terms of spherical harmonics. The autocorrelation integral is now defined in terms of elements of $SO(3)$, so we use the Plancherel theorem and correlation theorem for this group to calculate the second variation of the probability. Furthermore, the integrals needed are too involved to calculate by hand and so require Mathematica. It is likely possible to repeat the calculation in higher dimensions, but the calculations quickly become intractable. For the three dimensional case, we have written a Mathematica notebook which is available for download \cite{Website}. Theorems 1 and 2 hold in weaker regularity than $C^2$, and the specific regularity needed for the variation is discussed at the start of the proof of Theorem 1.

In Section 4, we remove the regularity assumptions of Theorem 1 and proves that the disk is a local maximum for the probability in the Hausdorff topology. We consider an arbitrary convex region which is Gromov-Hausdorff close to the disk. We first prove some estimates and use the homotopy invariance of the degree of sphere maps in order to establish the existence of a canonical homotopy from $D$ to $S$. Along this homotopy, we prove that the second derivative of the probability is uniformly H\"older-$1/2$ continuous. This can be thought of as a local $C^{2, 1/2}$ estimate on the probability near the disk.  Interestingly, for general convex regions near the disk, it seems we cannot use this approach to prove a uniform $C^3$ estimate along the canonical homotopy. As such, a $C^{2,\alpha}$ estimate is likely optimal. By Theorem 1, the second derivative of the probability is strictly negative at $0$, and so is strictly negative along the entire homotopy when $S$ is sufficiently close to $D$. Using this, we find a super-solution for the probability to find an upper bound on the probability which is smaller than that of the disk.
    
In Section 5, we prove that if a region if a convex domain is far from a disk in $C^0$ topology, then the probability of choosing an acute triangle is very small. A qualitative version of this result is well-known in the literature. Convex regions with large isoperimetric ratio are long and thin, so the three points are nearly collinear with large probability and thus have very small probability of forming an acute triangle. We make this intuition precise with an explicit estimate, which is helpful for the last section. Apart from providing further evidence for Hall's conjecture, this shows that the supremum of the probability occurs in a compact set of isoperimetric ratios. Since the only non-compact parts of the moduli space of convex figures correspond to blow-ups of the isoperimetric ratio, this shows that the supremum is achieved on a compact subset in the $C^0$ topology. Theoretically, this reduces the proof of the full conjecture to a finite computation, although it is not currently tractable. We discuss how this can be done in Section 6.

     \subsection{Isoprobabilistic inequalities and other applications}

It is instructive to consider Hall's conjecture in the context of general isoperimetric inequalities. The classic isoperimetric inequality states that of all unit volume convex regions, the measure of the boundary is minimized when the convex region is an $n$-ball. The idea has since been generalized greatly and has important implications in geometry, physics, and functional analysis (for a broad overview, see ~\cite{EP}). We propose that Hall's conjectured inequality can be interpreted as a probabilistic version of such an inequality.  

We say that an isoperimetric inequality is an {\em isoprobabilistic inequality} when it shows that the expected value of a geometric random variable defined for a planar domain realizes an extrema when the domain is the disk. These inequalities form a special case of the more general isoperimetry phenomena and contain important examples and non-examples. 
 
 As a preliminary example, we provide a isoprobabilistic interpretation of the Rayleigh-Faber-Krahn inequality. This inequality states that given a bounded domain in $\R^n$, the first Dirichlet eigenvalue is no less than the corresponding Dirichlet eigenvalue of a Euclidean ball with the same volume. If we consider the first Dirichlet eigenvalue of a domain in terms of the exponential rate of exit times of Brownian motion, then this gives an isoprobabilistic version of the Rayleigh-Faber-Krahn theorem. 
           
Another isoprobabilistic inequality appears in Sylvester's 4-point problem~\cite{NP}, which studies the probability that the convex hull of four points chosen at random in a planar region is a quadrilateral. For this problem, Blaschke showed that the disk maximizes the probability modulo affine transformations ~\cite{WB}. As such, Blaschke's result can be thought of as an affine geometric version of an isoprobabilistic inequality. For more examples, we refer the reader to the works of Paouris and Pivovarov \cite{PP} as well as Bauer and Schneider \cite{CBRS} (who refer to such inequalities as ``extremal problems").
     
    
  
  However, such inequalities are not universal. There are natural geometric problems, such as the so-called ``grass-hopper problem," for which the disk is not a maxima \cite{GK}. This problem was featured in a recent FiveThirtyEight Riddler \cite{538}. It is of interest to determine conditions on a geometric problem so that an isoprobabilistic inequality holds. 

    Apart from isoprobabilistic inequalities, random triangles have been studied in various spatial processes and applications. One classic example is from Broadbent's 1980 paper ``Simulating the Ley-Hunter," \cite{SB} which statistically tested the hypothesis that ancient megalithic sites were built along ley-lines. That is to say, giant stone structures in England such as Stonehenge were purposely placed collinearly. Broadbent was unable to calculate precise distributions of random triangles, so he used computer simulations instead. His results suggested that although there was greater collinearity than expected, it was probably due to the clustering of the megalithic sites.

 D.G. Kendall \cite{DK} further studied the problem, and used a method similar to Crofton's differential equation to understand the distribution of the shape of the triangles when three points are chosen from a convex region. He was able to calculate this distribution explicitly when the region is a disk and show that the distribution of shapes is very close to being uniformly distributed. The importance of this is that it allows collinearity tests to be applied assuming a uniform prior, without introducing too much error. As one deviates from a disk, the assumption of uniformity introduces more error, and may fail to be useful in those cases.

         \subsection{A Generalization of Hall's Conjecture}
         
        For simplicity, all the results in this paper are in terms of Hall's original conjecture on acute triangles. However, there is a natural generalization which we refer to as the Strong Hall's conjecture. 
       \begin{conjecture}[Strong Hall's Conjecture]
\begin{equation}
F_S(\phi) \leq F_{n-{\rm ball}}(\phi).
\end{equation}
 \end{conjecture}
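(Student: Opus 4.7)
The plan is to follow the three-step program that succeeded for the acute-triangle version, but now tracking a continuous parameter $\phi\in(0,\pi)$. First, I would set up the variational problem: fix $\phi$, take a $C^2$ one-parameter family $S(t)$ of perturbations of the $n$-ball $D$, and write $F_{S(t)}(\phi)$ as an integral over $S(t)^3$ of the indicator of the event ``the largest angle is $\leq \phi$.'' Differentiating at $t=0$ and applying the coarea formula reduces the first variation to a boundary integral against a kernel $K_\phi$, and the $SO(n)$-invariance of $D$ forces this variation to vanish for every $\phi$. Thus the ball is a critical point of each functional $F_{\bullet}(\phi)$, exactly as in Hall's original observation for $\phi=\pi/2$.

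Second, I would compute the second variation. Mirroring Section 2, I would expand the normal component of the deformation on $\partial D$ in Fourier (resp.\ spherical harmonic) modes and reduce the second variation to an autocorrelation integral of the form
\[
\delta^2 F_{S(t)}(\phi)\big|_{t=0} \;=\; \int A_n(\theta,\phi)\,(\rho\star\rho)(\theta)\,d\theta,
\]
where $A_n(\theta,\phi)$ is a $\phi$-dependent analog of the kernel $A_2(\theta)$ appearing in Theorem~1. By Plancherel, strict negativity of the second variation modulo congruences reduces to showing that the Fourier coefficients $\widehat{A_n}(k,\phi)$ have the correct sign for all $k\geq 2$ and all $\phi\in(0,\pi)$. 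Theorems~1 and~2 already establish this at $\phi=\pi/2$, and continuity in $\phi$ gives the bound on an open interval around $\pi/2$. To extend to the full range one would need an explicit contour or residue computation of $\widehat{A_n}(k,\phi)$, verifying the sign pattern uniformly.

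Third, the remaining steps would follow the architecture of Sections~4--6. The $C^{2,1/2}$-type Hausdorff estimate should admit the additional parameter $\phi$ with constants that remain uniform on compact subintervals of $(0,\pi)$, upgrading the weak local maximum to a Hausdorff-local maximum for each fixed $\phi$. For domains far from the ball in Hausdorff distance one needs a $\phi$-parametrized version of Theorem~4; this appears tractable because needle/pancake degenerations force the largest angle of a random triangle to concentrate near $\pi$, so $F_S(\phi)$ decays with the isoperimetric ratio at a rate that can be estimated uniformly in $\phi$ bounded away from $\pi$. Combined, these three ingredients would reduce the strong conjecture, in principle, to a finite computation on a compact moduli space of convex bodies, one $\phi$-slice at a time.

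The main obstacle I anticipate is the uniform negativity of $\widehat{A_n}(k,\phi)$ in $\phi$. For $\phi=\pi/2$ the Fourier coefficients exhibited a special algebraic pattern that drove the argument, and there is no a priori reason this pattern should persist as $\phi\to 0^+$ or $\phi\to \pi^-$, where the kernel $A_n(\cdot,\phi)$ degenerates and the conjectured inequality becomes very tight relative to the ambient probability. A secondary obstacle is that the Hausdorff step is currently tuned to the acute kernel; for $\phi$ near the endpoints, the level set $\{\text{largest angle}=\phi\}$ has geometry that depends singularly on $\phi$, which may force sharper regularity control on the deformation than the $C^{2,1/2}$ bound provides. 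Overcoming these would likely require combining the Fourier argument with a monotonicity-in-$\phi$ principle for $F_D(\phi)-F_S(\phi)$, rather than treating each $\phi$ independently.
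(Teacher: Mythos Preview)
The statement you are addressing is labeled a \emph{conjecture} in the paper and is not proved there; the Strong Hall's Conjecture is left open, with only a remark that the paper's methods extend to a local version. Your submission is accordingly a research program rather than a proof, and you say as much by flagging the obstacles. There is no ``paper's own proof'' to compare against.

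That said, your program tracks the paper's own suggestions closely. After the proof of Theorem~1 the paper notes that the weak-local argument goes through for general $\phi$ provided the zeroth and first Fourier coefficients of the $\phi$-dependent kernel $A(\theta,\phi)$ are positive and all higher ones negative, and it reports having verified this \emph{numerically} (via Mathematica and the formulas of Eisenberg--Sullivan) for $\pi/2\le\phi<\pi$, not analytically. So your first anticipated obstacle---uniform control of the sign pattern of $\widehat{A_n}(k,\phi)$ in $\phi$---is exactly the step the paper could not close rigorously. Your second obstacle, degeneration of the Hausdorff-local argument near the endpoints, is also anticipated (in a passage the author left commented out in the source): the radius of the neighborhood on which the super-solution bound works shrinks to zero as $\phi\to\pi$ or $\phi\to\pi/3$, so the method cannot yield a uniform-in-$\phi$ local result.

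One correction: the nontrivial range is $\phi\in(\pi/3,\pi)$, not $(0,\pi)$, since the largest angle of any triangle is at least $\pi/3$ and hence $F_S(\phi)\equiv 0$ for $\phi\le\pi/3$, making the inequality trivial there. Your proposed monotonicity-in-$\phi$ principle for $F_D(\phi)-F_S(\phi)$ goes beyond anything the paper attempts and would be a genuinely new ingredient if it could be established.
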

 
 Hall's paper uses $P(S)$ to be $F_S(\frac{\pi}2)$, so the original Hall's conjecture is the sub-conjecture that $F_S(\frac{\pi}2) \leq F_{n-{\rm ball}}(\frac{\pi}2)$. It is worth noting that the Strong Hall's Conjecture fails if $S$ is allowed to be non-convex. For instance, if we pick three points uniformly from three disks centered at the vertices of a large equilateral triangle, then there is a relatively large probability of forming a triangle that is very close to equilateral. Our work can be extended to prove a local version of the Strong Hall's conjecture conjecture as well as an estimate on $F_S(\phi)$ in terms of the isoperimetric ratio.

\section{The disk is a weak local maximum}

In this section, we present a proof of Theorem 1, which we restate here for convenience.

\begin{theorem*}
Let $S$ be a bounded convex subset of $\R^2$ and pick three points uniformly i.i.d. from $S$. We define $p(S)$ as the probability that these three points form an acute triangle.  The disk is a weak local maximum for the function. 
\end{theorem*}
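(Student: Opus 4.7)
The plan is variational, following the blueprint of Baddeley's generalization of Crofton's differential equation. I parameterize convex regions $S(t)$ near $D$ by a radial boundary perturbation $r(\theta,t) = 1 + t\,h(\theta) + O(t^2)$, so that $S(0)=D$ and $h\in C^1(S^1)$ plays the role of the normal deformation speed. Since translations and rescalings leave $p$ invariant, I normalize $h$ to have vanishing $n=0$ and $n=\pm 1$ Fourier modes (equivalently, fixing area and centroid of $S(t)$ to first order in $t$). Hall already observed that $D$ is a critical point of $p$, so $\dot p(0)=0$ for every admissible $h$, and the theorem reduces to proving $\ddot p(0)<0$ for every non-zero admissible $h$, with a coercive quadratic lower bound strong enough to dominate the cubic Taylor remainder under a $C^1$-small perturbation.

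The next step is to express $\ddot p(0)$ as a boundary integral. Writing
\begin{equation}
p(S(t)) = |S(t)|^{-3}\int_{S(t)^3}\mathbf{1}_{\text{acute}}(x,y,z)\,dx\,dy\,dz,
\end{equation}
I differentiate twice in $t$ via a Baddeley-type identity that converts each interior integration into a boundary integration weighted by $h$. The resulting expression splits into three pieces: a purely linear-in-$h$ boundary term that vanishes after the centroid and area normalization; contributions from the derivatives of $|S(t)|^{-3}$, which on the round disk recombine into rotationally invariant remainders; and the essential piece, a double boundary integral whose integrand, by the rotational symmetry of $D$, depends only on the angular gap between the two boundary points. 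Collecting these contributions yields
\begin{equation}
\ddot p(0) = \int_0^{2\pi}\!\!\int_0^{2\pi} h(\alpha)\,h(\beta)\,A_2(\alpha-\beta)\,d\alpha\,d\beta
\end{equation}
for an explicit $2\pi$-periodic kernel $A_2$, essentially the density that a third uniform point completes an acute triangle with two prescribed boundary points separated by angle $\theta$.

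To sign this quadratic form, I expand $h(\theta)=\sum_n a_n e^{in\theta}$ and apply the correlation theorem together with Plancherel to obtain
\begin{equation}
\ddot p(0) = 4\pi^2\sum_{n\in\Z} |a_n|^2\,\widehat{A_2}(n).
\end{equation}
Only modes with $|n|\ge 2$ contribute, by the normalization on $h$. It therefore suffices to evaluate $\widehat{A_2}(n)$ and show strict negativity with a quantitative decay $|\widehat{A_2}(n)|\gtrsim n^{-\gamma}$. This yields a Sobolev coercivity estimate $\ddot p(0)\le -c\,\|h\|_{H^s}^2$, and under a $C^1$-bound on $h$ this dominates the cubic Taylor remainder, giving the weak local maximum.

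The main obstacle will be the explicit evaluation and uniform negativity of $\widehat{A_2}(n)$ for all $|n|\ge 2$. The kernel $A_2$ is built from arcsines and circular-segment areas arising from intersections of half-planes inside $D$, and its individual Fourier integrals look unpromising mode by mode; the paper signals that the sequence $\widehat{A_2}(n)$ in fact displays a clean pattern, and locating this pattern — rather than fighting through each integral separately — is the crux of the computation. A secondary, mostly bookkeeping, difficulty is carefully isolating the $A_2$ piece from the lower-order contributions coming from second-order boundary corrections and from the expansion of the $|S(t)|^{-3}$ prefactor, and verifying that these really do cancel or drop out under the admissibility normalization on $h$.
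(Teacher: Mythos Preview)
Your outline matches the paper's variational strategy closely, but your expression for the second variation is incomplete, and the missing term is not cosmetic.

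When you differentiate $M(t)=\int_{S(t)^3}\mathbf{1}_{\text{acute}}$ twice, the chain rule produces not only the double-boundary term with two points on $\partial D$ (your $A_2$ piece) and the rotationally invariant $\|h\|^2$ terms coming from the arc-length Jacobian and from $|S(t)|^{-3}$, but also a term in which a single boundary point moves radially and this motion flips the triangle from obtuse to acute: formally
\[
3\int_0^{2\pi}\!\int_D\!\int_D \frac{\partial}{\partial r}\,\mathbf{1}_{\text{acute}}\bigl(r e^{i\theta_1},y,z\bigr)\Big|_{r=1}\,h(\theta_1)^2\,dy\,dz\,d\theta_1.
\]
By rotational symmetry this is a constant $-L$ times $\|h\|_{L^2}^2$, so it looks like one of the ``rotationally invariant remainders'' you intend to discard, but it does \emph{not} cancel against the $|S(t)|^{-3}$ contributions (those cancel exactly against the arc-length Jacobian term). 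The correct formula is therefore
\[
\pi^3\,\ddot p(0)\;=\;\sum_{n}(\,\widehat{A_2}(n)-L\,)\,|a_n|^2,
\]
not $\sum_n \widehat{A_2}(n)\,|a_n|^2$. A quick consistency check exposes the gap: for a pure translation $h(\theta)=\cos\theta$ one must have $\ddot p(0)=0$, yet $\widehat{A_2}(1)=\frac{1}{8\pi}(\pi^2-8)>0$, so your formula gives a strictly positive answer. In the paper the constant $L$ is identified precisely by imposing this translation invariance, which forces $L=\widehat{A_2}(1)$; for $|n|\ge 2$ one then has $\widehat{A_2}(n)<0<L$, so the combined coefficients $\widehat{A_2}(n)-L$ are uniformly bounded away from zero. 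This yields genuine $L^2$-coercivity $\ddot p(0)\le -c\,\|h\|_{L^2}^2$, which is what makes the weak local maximum argument go through; the $n^{-\gamma}$ decay of $\widehat{A_2}(n)$ alone would give only a negative Sobolev norm and would not obviously dominate the remainder.
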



 \begin{proof}

The general strategy is to show that the second variation of the probability is strongly negative.

\subsection{Some definitions}

In order to define the probability of choosing a convex triangle in a particular region $S$, we consider the following function.

\[ f(x,y,z) =   \left\{
\begin{array}{ll}
    1 & \triangle(x,y,z) \textrm{ acute} \\
    0 & 0\textrm{ otherwise} \\
\end{array} 
\right. \]

Then, the probability of choosing an acute triangle is 
\[ p(S) =  \frac{\int_S \int_S \int_S f(x,y,z)\,  dx \,  dy \,  dz}{V(S)^3} \]

This definition should be intuitively reasonable; it is the total mass of acute triangles in $S^3$ divided by the total volume of $S^3$. Note that $p(S)= F_S(\frac{\pi}2)$, by definition.

\subsection{The set-up}
Consider a $C^2$ family of convex regions $S(t)$ with $S(0)$ a unit disk centered at the origin. By applying a time dependent similarity, we assume that the volume remains constant and that it is centered at the origin throughout the variation. We also want to assume that the variation is not identically zero at $t=0$.  
For small $t$, this ensures we have enough regularity to write $S(t)$ in terms of polar coordinates as $S(t) = \{ re^{i \theta} \in \mathbb{R}^2 | r < r(\theta, t)\}$. Note that $r(\theta, 0) \equiv 1$.

We consider the density $\mu(\theta, t)$ (which is of mixed sign), so that 
$$r(\theta, t) = 1 + \mu(\theta, t)t.$$ 
Then, the second order Taylor expansion of $r(\theta, t)$ at $0$ is 
$$1 + t \mu(\theta, 0) + \frac{t^2}{2} \frac{d \mu(\theta, 0)}{ d t}.$$
As we have assumed that the variation is non-zero, $\mu(\theta, 0) \not \equiv 0$. Since $S(t)$ has constant volume to first order, $\mu(\theta, 0)$ has total integral zero.

There are a few technical points to make before proceeding.

\begin{enumerate}
\item We do not linearize the variation. If we do so, we cannot force the second variation of the area of the region to vanish. When we prove that the ball is a local maximum in the $C^0$ norm (Theorem 3), we will linearize the variation in a particular way.
\item Generally, one considers the variation as a function, and not in terms of a measure. However, as a historical connection to the original approach of moving manifolds and to make the Fourier analysis more natural, we instead consider $\mu(\theta, t)$ as a time-dependent density on the boundary of our region. We will often abuse notation and ``draw points" from the density $\mu$. In practice, this corresponds to drawing points proportional to the density $ | \mu |$ and then using the fact that $S$ is shrinking at the points where $\mu$ is negative. Finally, we will often omit either the $\theta$ or $t$ from $\mu(\theta, t)$ for conciseness.
\item The most natural assumption on the regularity is that the variation is $C^2$ smooth. The minimal regularity necessary is that $\frac{\partial^2 r}{\partial t^2}$ is $L^2$ in $\theta$ and $\frac{\partial r}{\partial t}$ is $L^2$ in $\theta$ and is uniformly integrable in $t$ for fixed $\theta$.
\end{enumerate}

 \subsection{Calculating the first and second variations}

Since we have fixed the volume throughout the variation, we calculate the variation of the total mass of acute triangles. We use polar coordinates in order to simplify the calculation.


\begin{flalign*}
 &M =  \int_{S \times S \times S} f(x,y,z)  \,dx \, dy \, dz   =  \\
 &\int_0^{2 \pi} \int_0^{2 \pi} \int_0^{2 \pi} \int_0^{r(\theta_1, t)}  \int_0^{r(\theta_2, t)}  \int_0^{r(\theta_2, t)}  f(r_1e^{i \theta_1},r_2e^{i \theta_2},r_3e^{i \theta_3}) r_1 r_2 r_3 \, dr_1 \,dr_2\, dr_3  \,d \theta_1\, d \theta_2\, d \theta_3.
\end{flalign*}

\noindent The first variation of $f$ is the following:
\begin{flalign*}
 \frac{d}{dt} \left(\int_{S \times S \times S} f(x,y,z)  \,dx \, dy \, dz \right)& = & 
\end{flalign*}
\begin{flalign*}
 \frac{d}{dt} \left(  \int_0^{2 \pi} \int_0^{2 \pi} \int_0^{2 \pi}  \int_0^{r(\theta_3, t)}   \int_0^{r(\theta_2, t)}  \int_0^{r(\theta_1, t)}  f(r_1e^{i \theta_1},r_2e^{i \theta_2},r_3e^{i \theta_3}) r_1 r_2 r_3 \,dr_1 \,dr_2 \,dr_3  \,d \theta_1 \,d \theta_2\, d \theta_3  \right)
\end{flalign*}
\begin{flalign*}
= 3 \int_0^{2 \pi} \int_0^{2 \pi}  \int_0^{2 \pi}   \int_0^{r( \theta_3, t)}  \int_0^{r( \theta_2, t)} &  f(r( \theta_1, t) e^{i \theta_1},r_2e^{i \theta_2},r_3e^{i \theta_3}) r(\theta_1 ,t) \mu(\theta_1, t) r_2 r_3\, dr_2\, dr_3\,  d \theta_1\, d \theta_2 \,d \theta_3.
\end{flalign*}

For convenience, we now prove that the disk is a critical point of the probability. Although Hall's work does not explicitly state it, his approach can be used to show this lemma. From personal correspondence, he was aware of this fact, so we do not claim the following result as new.
 
 \begin{lemma}
 The disk is a weak critical point of the probability functional.
 \end{lemma}
 
 \begin{proof}
 We use the variational formula to compute the first variation of $M$ at $t=0$.
 
 \begin{flalign*}
 \frac{d}{dt} \left(\int_{S \times S \times S} f(x,y,z)  \,dx \, dy \, dz \right)\Bigg|_{t=0} &  &
\end{flalign*}
\begin{flalign*} 
= 3 \int_0^{2 \pi} \int_0^{2 \pi}  \int_0^{2 \pi}   \int_0^{1}  \int_0^{1} &  f(r(0, \theta_1) e^{i \theta_1},r_2e^{i \theta_2},r_3e^{i \theta_3}) \mu(\theta_1, 0) r_2 r_3\, dr_2\, dr_3\,  d \theta_1\, d \theta_2 \,d \theta_3  & &
\end{flalign*}
\begin{flalign*}
= 3 \int_0^{2 \pi} \int_0^{2 \pi}  \int_0^{2 \pi}   \int_0^{1}  \int_0^{1} &  f(e^{i \theta_1},r_2e^{i \theta_2},r_3e^{i \theta_3})  r_2 r_3\, dr_2\, dr_3\, d \theta_2 \,d \theta_3 \, \mu(\theta_1, 0) d \theta_1. & &
\end{flalign*}

 By rotational symmetry of the circle, the following term is invariant in $\theta_1$:
  \begin{flalign*}
\int_0^{2 \pi}  \int_0^{2 \pi}   \int_0^{1}  \int_0^{1} &  f(e^{i \theta_1},r_2e^{i \theta_2},r_3e^{i \theta_3})  r_2 r_3\, dr_2\, dr_3\, d \theta_2 \,d \theta_3 
 \end{flalign*}
 
We call this term $\mathcal{A}$  as it is the mass of acute triangles when we fix one point on the boundary and choose the other two points uniformly at random. We can now substitute it back into our variational equation.  
 
$$
 \frac{d}{dt} \left( \int_{S \times S \times S} f(x,y,z)  \,dx \, dy \, dz \right) _{t=0}    
= 3 \int_0^{2 \pi} \mathcal{A} \mu(\theta_1, 0) \, d \theta_1  = 0.
$$

The final equality holds as the total integral of $\mu(\theta, 0)$ is zero since the variation has constant volume.

\end{proof}



 The second variation of $M$ is the following:

\begin{flalign}\label{eq:2nd var} \nonumber 
 \twodt \left( \int_{S \times S \times S} f(x,y,z)  \,dx \, dy \, dz \right) = &  & \\
\nonumber 3 \int_0^{2 \pi}  \int_S  \int_S &  f(r( \theta_1, t) e^{i \theta_1},y,z) \dt  \mu(\theta_1, t) r(\theta_1, t) \,dy \, dz \, d \theta_1 & & \\
\nonumber +  6 \int_0^{2 \pi} \int_0^{2 \pi} \int_S &  f(r(\theta_1, t) e^{i \theta_1},r(\theta_2, t) e^{i \theta_2},z) \mu(\theta_1, t) \mu(\theta_2, t) r(\theta_1, t) r(\theta_2, t)  \, dz \,  d \theta_1 \, d \theta_2 & & \\
\nonumber +  3 \int_0^{2 \pi} \int_S \int_S  \frac{\partial}{\partial r}& f(r(\theta_1, t) e^{i \theta_1},y,z) \mu(\theta_1, t)^2 r(\theta_1, t) \, dy \,d z \, d \theta_1 & & \\
\nonumber +  3 \int_0^{2 \pi} \int_S \int_S  & f(r(\theta_1, t) e^{i \theta_1},y,z) \mu(\theta_1, t)^2 \, dy \,d z \, d \theta_1 & & 
\end{flalign}

 Since $f$ is an indicator function, $\frac{\partial f}{\partial r}$ must be interpreted distributionally. We address this issue in detail when we analyze this term. We now analyze each of these four terms.

\subsection{The first term and fourth term}

Since the disk is a critical point, at $t=0$ (when $r(\theta, t) \equiv 1$), the following is independent of $\theta_1$: 
$$ \int_{S(0)}  \int_{S(0)}  f(r( \theta_1, t) e^{i \theta_1},y,z) \,dy\, dz$$

As a consequence of the results in \cite{GH}, this term is equal to $M$. As such, we have the following: 

\begin{eqnarray*}
 3 \int_0^{2 \pi}  \int_{S(0)}  \int_{S(0)}  f(e^{i \theta_1},y,z) \dt \mu(\theta_1, t) \,dy \,dz \,  d \theta_1 &  = &  3 \pi^2 \left( \frac{4}{\pi^2}-\frac{1}{8}\right) \int_0^{2 \pi} \dt \mu(\theta_1, t) \, d \theta_1 \\
\end{eqnarray*}


Since the volume is constant to second order, we have the following equality.

\begin{flalign*}
0  = \twodt V |_{t=0} = \twodt \left( \int_0^{2 \pi} r(\theta, t)^2 \, d \theta \right)_{t=0}
= \twodt \left(\int_0^{2 \pi} \left( 1+\mu(\theta, 0) t+ \frac{d \mu(\theta, 0)}{dt} \frac{t^2}{2} \right)^2 \, d \theta \right)_{t=0}
\end{flalign*}
\begin{flalign*}
= \left(\int_0^{2 \pi} 2\frac{d \mu}{dt} + 2 \mu^2 + 6 \mu \frac{d \mu}{dt} t + 3 \left(\frac{d \mu}{dt} \right)^2  t^2 \, d \theta\right)_{t=0}
=  \int_0^{2 \pi} \left(2\frac{d \mu}{dt} + 2 \mu^2 \, d \theta \right). & &
\end{flalign*}

Thus, we can rewrite the previous identity.

\begin{equation}\label{eq:1st term}
3\pi^2 \left( \frac{4}{\pi^2}-\frac{1}{8}\right) \int_0^{2 \pi} \dt \mu(\theta_1, 0) \, d \theta_1  =  -3\pi^2 \left( \frac{4}{\pi^2}-\frac{1}{8}\right) \int_0^{2 \pi} \mu(\theta,0)^2 \, d \theta_1	
\end{equation}

However, this exactly cancels out the first term in the second variation formula.

\begin{eqnarray*}
 3 \int_0^{2 \pi} \int_{S(0)} \int_{S(0)}   f(e^{i \theta_1},y,z) \mu(\theta_1,0)^2 \, dy \,d z \, d \theta_1 &=& 3 \int_0^{2 \pi} \int_S \int_S   f(e^{i \theta_1},y,z)  \, dy \,d z \, \mu(\theta_1,0)^2 d \theta_1 \\
 \end{eqnarray*}
 The inner integral is independent of $\theta_1$, and so we find the following:
\begin{eqnarray*}
  3 \int_0^{2 \pi} \int_{S(0)} \int_{S(0)}   f(e^{i \theta_1},y,z) \mu(\theta_1,0)^2 \, dy \,d z \, d \theta_1 & = &  3\pi^2 \left( \frac{4}{\pi^2}-\frac{1}{8}\right) \int_0^{2 \pi}  \mu(\theta_1,0)^2 d \theta_1
\end{eqnarray*}

Therefore, the first and fourth terms exactly cancel when $t=0$.

\subsection{The second term}

We now consider the second term:
\begin{equation}\label{eq:2nd term}
6 \int_0^{2 \pi} \int_0^{2 \pi} \int_S f(r(\theta_1, t) e^{i \theta_1},r(\theta_2, t) e^{i \theta_2},z) \mu(\theta_1, t) \mu(\theta_2, t) r(\theta_1, t) r(\theta_2, t) \, dz \,  d \theta_1\, d \theta_2
\end{equation}

This term is non-trivial in that it depends on the particular choice of $\mu(\theta, 0)$. For the rest of this calculation, we restrict our calculation to $t=0$, when the region is a disk. 
This term corresponds to the case in which we pick two points $X$ and $Y$ on the boundary i.i.d with respect to the density $ \mu$ and look at the mass of convex triangles when we do this. By symmetry, this mass only depends on the angle $\theta$ between $X$ and $Y$. Note that in Figure 1, we will have an acute triangle if the third point, chosen uniformly at random within the larger circle, falls in the shaded area between the two parallel lines and outside of the smaller circle. The mass of acute triangles is simply the area of this region.

\begin{center}
\includegraphics[width=150mm,scale=0.5]{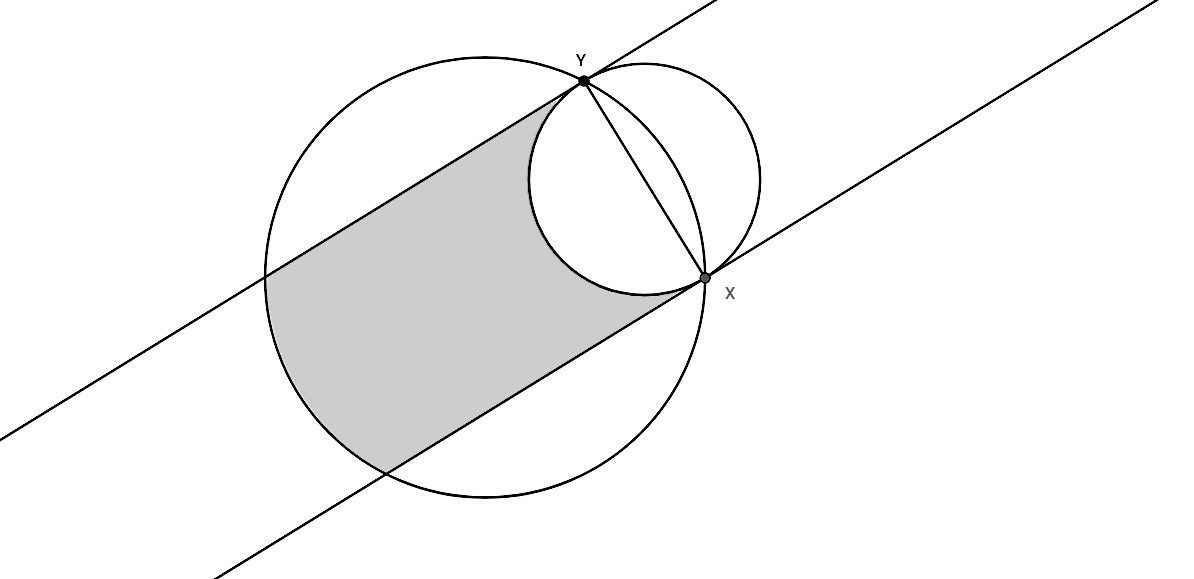}
Figure 1. 
\end{center}

We let $A_2(\theta)$ be the mass of acute triangles with angle $\theta$ between $X$ and $Y$. An exercise in geometry (done in \cite{GH}) shows that:

\[ A_2(\theta) =   \left\{
\begin{array}{ll}
     \frac{ \theta}{2}+\frac{3 \sin( \theta)}{2}- \frac{\pi}{4}+ \frac{\pi \cos(\theta)}{4} &  0<\theta< \pi \\
   A_2(2 \pi - \theta) =  -\frac{ \theta}{2} - \frac{3 \sin( \theta)}{2} + \frac{3 \pi}{4} + \frac{\pi \cos(\theta)}{4} &  \pi<\theta< 2 \pi \\
\end{array} 
\right. \]

Thus, we have that ~\eqref{eq:2nd term} is proportional to 
$$\int_0^{2\pi} \int_0^{2 \pi} \mu(u) \mu(u+ \theta) du~ A_2(\theta) d\theta.$$

We consider the Fourier series for $A_2(\theta)$. As $A_2(\theta)$ is even, this only involves $\cos(n\theta)$ terms. 

\begin{lemma}[The Fourier coefficients of $A_2(\theta)$]

$$ \frac{1}{2 \pi} \int_0^{2 \pi} A_2(\theta) d\theta = \frac{3}{\pi}$$
$$ \frac{1}{2 \pi} \int_0^{2 \pi} A_2(\theta) \cos(\theta) d\theta = \frac{1}{8 \pi} (-8 + \pi^2)$$
For $n \geq 2$, we have
$$ \frac{1}{2 \pi} \int_0^{2 \pi} A_2(\theta) \cos(n \theta) d\theta =\frac{1}{2 \pi} \frac{(-1)^n (-2 - 4 n^2 + (2 - 8 n^2) (-1)^n )}{2 n^2 (-1 + n^2)}$$

In particular, zero-th and first Fourier coefficients of $A_2(\theta)$ are positive and the rest are negative.

\end{lemma}

To prove this lemma, we can directly compute the integrals. 
Using this observation, we now compute $\int_0^{2\pi} \int_0^{2 \pi} \mu(u) \mu(u+ \theta) du~ A_2(\theta) d\theta.$
 We let $R_\mu( \theta ) = \int_0^{2 \pi} \mu(u) \mu(u+ \theta) du $ be the non-normalized autocorrelation of $\mu$. Note that $R_\mu$ is $L^\infty$ (hence also $L^1$ and $L^2$) since $\mu$ is $L^2$. Therefore, we can use Fubini's theorem, Fourier analysis and our previous work to show the following.

\begin{flalign*}
\int_0^{2\pi} \int_0^{2 \pi} \mu(u) \mu(u+ \theta) du~ A_2(\theta) \, d\theta = \int_0^{2\pi}  R_\mu(\theta) A_2(\theta)\, d\theta & &
\end{flalign*}
\begin{flalign*}
=  \int_0^{2\pi}  R_\mu(\theta)  \sum_{n=0}^\infty a_n \cos(n \theta)\, d\theta =
 \sum_{n=0}^\infty a_n  \int_0^{2\pi} R_\mu(\theta) \cos(n \theta)\, d\theta 
= 2 \pi \sum_{n=0}^\infty a_n  \mathcal{F}[ R_\mu](n). & &
\end{flalign*}

In order to write the right hand term more explicitly in terms of $\mu$, we use the following consequence of the autocorrelation theorem.

\begin{obs*}
If the Fourier series of $\mu(\theta,0)$ is given by $\mu(\theta, 0) = \sum_{k=0}^\infty c_k  \cos (k \theta) + d_k \sin(k \theta)$, then the Fourier transform of $R_\mu$ is the following:
\[ \mathcal{F}[R_\mu] (n) = c_n^2 + d_n^2 \]
\end{obs*}

Substituting this identity into the autocorrelation term, we find the following:

\begin{eqnarray*}
6 \int_0^{2 \pi} \int_0^{2 \pi} \int_S f(e^{i \theta_1},e^{i \theta_2},z) \mu(\theta_1, 0) \mu(\theta_2, 0) \, dz \,  d \theta_1\, d \theta_2 \\
\propto \sum_{n=0}^\infty a_n (c_n^2+d_n^2)
\end{eqnarray*}

To give some intuition, note that since $\mathcal{F}[R_\mu] (n) \geq 0$ for all $n$, then we have $ \sum_{n=0}^\infty a_n \mathcal{F}[R_\mu] (n) >0$ implies that either $\mathcal{F}[R_\mu] (0)$ or $\mathcal{F}[R_\mu] (1)$ are non-zero. We will later show that if our variation preserves the volume, $\mathcal{F}[R_\mu ] (0) = 0$ and if our variation does not translate the region, $\mathcal{F}[R_\mu] (1) = 0$. 
The fact that $a_0$ is positive corresponds to the fact that the mass of acute triangles increases as one scales up the region. Later, there will be a negative term that exactly cancels out positive contribution from $a_1$, which corresponds to the fact that translations do not change the mass of acute triangles. The fact that $a_n$ is negative otherwise shows that other deformations decrease the mass of acute triangles.

\subsubsection{A quick calculation}

For the next step, it is useful to calculate this term precisely when $\mu = \cos(\theta) - \sin^2(\theta) t $ when $t=0$, as this allows us to skip several integrals.

\begin{eqnarray*}
6 \int_0^{2\pi} \int_0^{2 \pi} \mu(u) \mu(u+ \theta)\, du~ A_2(\theta)\, d\theta & = & 6 *2 \pi \sum_{n=0}^\infty a_n  \mathcal{F}[ R_{\cos(\theta)}](n) \\
& = & \frac{3}{4 \pi} (-8 + \pi^2) \pi^2 \\
\end{eqnarray*}

\subsection{The third term}

We now calculate the third term, which we write formally as the following.

\begin{equation}\label{eq:3rd term}
3 \int_0^{2 \pi} \int_S \int_S \frac{\partial}{\partial r} f(r( \theta_1, t) e^{i \theta_1},y,z) (\mu(\theta_1))^2 ~dy \,d z\, r( \theta_1, t) d \theta_1  
\end{equation}

 This term corresponds to times where we pick one point on the boundary and two in the interior that form a right triangle. In this case, moving the boundary changes this triangle from obtuse to acute or vice-versa.

 \begin{center}
\includegraphics[width=125mm,scale=0.5]{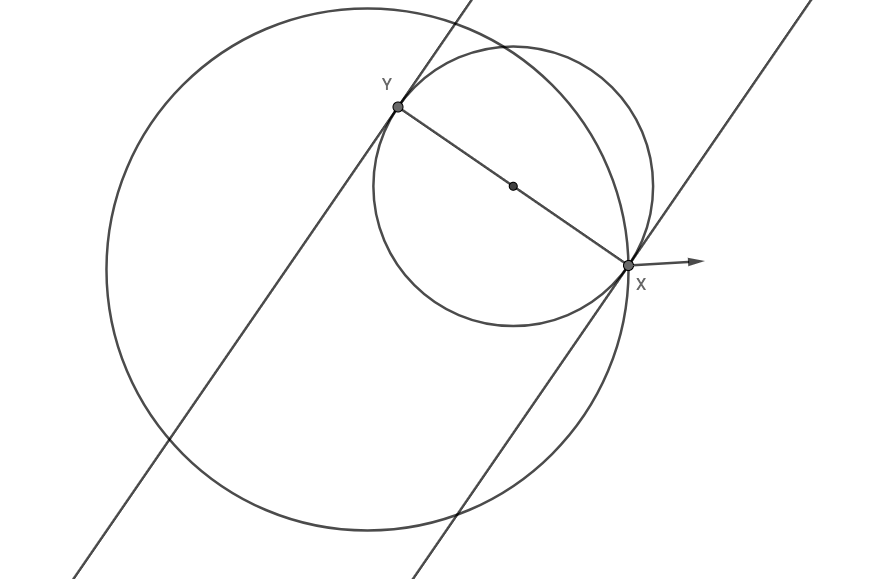}
Figure 2. 
\end{center}

 In Figure 2, $X$ is chosen on the boundary with respect to $\mu$ and the variation moves $X$ outward (as indicated by the vector). We then pick $Y$ inside the larger disk and a point $Z$ (not shown) which lies on the smaller circle or one of the two parallel lines. Due to the movement of $X$, the triangle $\triangle(XYZ)$ will change from obtuse to acute or vice-versa.
 
 To write this down classically, we consider the curve $X(t)=X_0 + Vt$ where $X_0$ is a base point and $V$ is a chosen vector. Then, we interpret $ \frac{\partial}{\partial V} f(X(t),y,z)$ distributionally to satisfy
 \begin{eqnarray*}
\int_S \int_S \frac{\partial}{\partial V} f(X(t),y,z) ~dy \,d z  = \dt \int_S \int_S f(X(t),y,z) ~dy \,d z.
\end{eqnarray*}
 
  If we wanted to avoid distributional derivatives, we can evaluate the second integral classically. 
   This would result in 3 separate quadruple integral over a complicated geometric set. 
    For our purposes, it is much more elegant and less tedious to leave it as a distributional derivative, and we will do so.


  We first reduce this integral to the following:

$$   3 \int_0^{2 \pi} \frac{\partial}{\partial r} \int_S \int_S  f(r( \theta_1, t) e^{i \theta_1},y,z)  \,dy\, d z (\mu(\theta_1))^2 \, r( \theta_1, t) d \theta_1  $$

Furthermore, by the symmetry of the circle, we note that the following is independent of $\theta_1$:
 $$ \frac{\partial}{\partial r} \int_S \int_S  f(r( \theta_1, t) e^{i \theta_1},y,z)  \,dy\, d z $$

  We want to compute the above integral. To do so directly would require involved geometric integrals. Therefore, we will use the following observation. We observe that if 
$$\mu = \cos(\theta)-\dfrac{t \sin^2(\theta)}{(1-t^2 \sin^2(\theta))^{1/2}},$$
this corresponds to a shift of the disk. At $t=0$, $\mu \approx \cos(\theta)-t \sin^2(\theta)$ to second order and so we set $\nu = \cos(\theta)-\sin^2(\theta)t$. As a result,
$$\twodt P(S_\nu) = 0.$$
For this to be true,
   $$ 3 \int_0^{2 \pi} \frac{\partial}{\partial r} \int_S \int_S  f(r(\theta_1, t) e^{i \theta_1},y,z)  \,dy\, d z \cos^2(\theta_1) \, d \theta_1$$ must exactly cancel out with the term from ~\ref{eq:2nd term}.
   
Calculating these directly, we find the following
$$
6 \int_0^{2\pi} \int_0^{2 \pi} \cos(u) \cos(u+ \theta) du\, A_2(\theta) \,d\theta = \frac{3}{4 \pi} (-8 + \pi^2) \pi^2. 
$$

   \noindent Therefore, $$ 3 \frac{\partial}{\partial r} \int_S \int_S  f(r( \theta_1, t) e^{i \theta_1},y,z) \, dy\,d z =   -\frac{3}{4 \pi} (-8 + \pi^2) \pi^2  $$
    For convenience, we define $-L=  \frac{3}{4 \pi} (  8 - \pi^2 ) \pi^2$. Using the Plancherel identity, this shows the following identity:

\begin{flalign*}
    3 \int_0^{2 \pi} \frac{\partial}{\partial r} \int_S \int_S  f(r( \theta_1, t) e^{i \theta_1},y,z)\,  dy\, d z \, (\mu(\theta_1))^2 \, d \theta_1 = -L \int_0^{2 \pi}   \mu(\theta_1)^2 \, d \theta_1  = -L  \sum_{k=0}^\infty c_k^2 + d_k^2
    \end{flalign*}

\subsection{Combining all the terms}

To finish the proof, recall that we have decomposed $\mu(\theta,t)$ in terms of its Fourier series as follows:

  \[ \mu(\theta,t)= \sum_{k =0}^\infty c_k \cos(k \theta) + d_k \sin( k \theta) \]

We have that that $c_0(0) = 0$ since the total integral of $\mu(\theta, 0)$ is zero, and that $d_0(0)=0$ as $\sin(0 \cdot \theta) \equiv 0$. Thus, for any measure $\mu$ that preserves the volume, we have: 

\begin{equation*}
 \twodt \int_{S \times S \times S} f(x,y,z)\, dx\, dy\, dz =  \sum_{ k=1}^\infty (a_k -L) (c_k^2 + d_k^2)  \leq  0 \\
    \end{equation*}
    
The above equality only holds for the sum, and cannot be decomposed into a term by term equality. This is because we have that $$\int_0^{2 \pi} \frac{d \mu}{dt} \, d \theta ~|_{t=0} =  \int_0^{2 \pi} \mu^2 \, d \theta$$ but we don't have any information about how $\dfrac{d \mu}{dt}~ \bigl |_{t=0}$ decomposes as a Fourier series.

Equality holds only when $c_1(0)$ and $d_1(0)$ are the only non-zero terms, in which case $\mu(0) =  c \cos(\theta) + d \sin(\theta)$. In this case, the variation translates the disk to second order, and so our assumption that the disk is not translated is equivalent to the fact that $c_1(0)=d_1(0) = 0$.

To finish the argument, suppose that $S(t)$ is a $C^2$ family of convex regions that have constant volume and which are not translated. Suppose further that $\mu(\theta, 0) \not \equiv 0$. Then, since $\mu(\theta, 0)$ is $L^2$ in $\theta$ we can use the Plancherel theorem to show the following:

\begin{equation*}
 \twodt \int_{S \times S \times S} f(x,y,z)\, dx\, dy\, dz = \sum_{k=2}^\infty (a_k -L) (c_k^2 + d_k^2)   
									 < -L \| \mu (\theta, 0) \|^2_{L^2}\, .
 \end{equation*}

  Thus the second variation is strongly negative, and so the disk is a weak local maximum of the probability functional (see Theorem 2 of Chapter 5 in \cite{GF}).


\end{proof}
 
 It is worth noting that the same argument can be used to prove a weak local version of the Strong Hall's Conjecture. We define $A(\theta, \phi)$ as the mass of angles whose largest angle is no larger than $\phi$, given two points on the boundary separated by an angle $\theta$  The key lemma that must be proven is that the zeroth and first Fourier coefficients of $A(\theta, \phi)$ (in terms of $\theta$) are positive while all the rest are negative. Using Mathematica and the results of \cite{ES}, we checked this numerically for $\pi/2 \leq \phi < \pi$. 

 \section{The ball is a weak local maximum}
In $\mathbb{R}^3$, we can prove that the ball is a weak local maximum for the probability functional. The proof is similar to that of Theorem 1 except with additional technical details. The integrals are more demanding and we used Mathematica to evaluate them. Furthermore, we used representation theory to generalize the convolution theorem and Plancherel identity for $SO(3)$. Precisely, we prove the following theorem.

\begin{theorem*}
The ball is a weak local maximum for $p$. More precisely, given a smooth one parameter family of convex regions $S(t)$ where $S(0)$ is the ball, $p(S(t))$ has a local maximum at $0$.
\end{theorem*}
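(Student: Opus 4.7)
The plan is to mirror the variational approach from the proof of Theorem 1, replacing Fourier series on $S^1$ by spherical harmonics on $S^2$ and replacing autocorrelation on $S^1$ by the analogous autocorrelation structure on $SO(3)$. First I would write a $C^2$ family of convex regions near the ball in spherical coordinates as $S(t) = \{(r,\theta,\varphi) : r < r(\theta,\varphi,t)\}$ with $r(\theta,\varphi,0) \equiv 1$, decompose the normal-velocity density $\mu(\theta,\varphi,t)$ into spherical harmonics $\mu(\cdot,t) = \sum_{l \geq 0}\sum_{m=-l}^{l} c_{l,m}(t)\, Y_l^m$, and apply time-dependent similarities and translations so that the enclosed volume and the centroid are fixed throughout the variation. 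The $SO(3)$-symmetry of the ball then forces the first variation of $p$ to vanish by the same averaging argument as Lemma 1.

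Next I would expand the second variation of $\int_{S\times S\times S} f\, dx\, dy\, dz$ into four terms exactly as in~\eqref{eq:2nd var}. The contribution from $\frac{d\mu}{dt}$ pairs with the pure-$\mu^2$ term and is killed by the second variation of the volume constraint, in direct analogy with~\eqref{eq:1st term}. Two substantive contributions remain: an autocorrelation term involving pairs of boundary points, and a distributional-derivative term from moving a single boundary point. Defining $A_3(\theta)$ to be the volume of points $Z$ in the unit ball for which $\triangle(X,Y,Z)$ is acute when $X,Y \in S^2$ are separated by spherical angle $\theta$, the autocorrelation term becomes
\begin{equation*}
6 \int_{S^2}\int_{S^2} \mu(x)\,\mu(y)\, A_3(\angle(x,y))\, d\sigma(x)\, d\sigma(y).
\end{equation*}
Computing $A_3$ in closed form is a finite but tedious geometric integral that I would carry out in Mathematica, following the techniques of~\cite{GH}.

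The structural step is to diagonalize this autocorrelation using the representation theory of $SO(3)$. Because $A_3$ depends only on the spherical angle between its two arguments, it expands in Legendre polynomials as $A_3(\theta) = \sum_{l \geq 0} \alpha_l P_l(\cos\theta)$, and the Funk--Hecke formula together with the Plancherel and convolution theorems for $SO(3)$ converts the double integral above into a decoupled sum
\begin{equation*}
6 \int_{S^2}\int_{S^2} \mu(x)\mu(y) A_3(\angle(x,y))\, d\sigma(x)\, d\sigma(y) = \sum_{l \geq 0} \beta_l \sum_{m=-l}^{l} |c_{l,m}|^2,
\end{equation*}
where the $\beta_l$ are explicit positive multiples of $\alpha_l$. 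The distributional third term, as in the 2D proof, can be pinned down by plugging in the one-parameter family of rigid translations of the ball (whose boundary variation is a pure $l=1$ spherical harmonic) and forcing the total second variation along translations to vanish. This produces an explicit constant $L > 0$ such that the third term equals $-L \|\mu\|_{L^2(S^2)}^2$.

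The main obstacle is establishing the sign pattern of the Legendre coefficients of $A_3$: I expect $\alpha_0, \alpha_1 > 0$ and $\alpha_l < 0$ for all $l \geq 2$, with $\beta_l - L$ uniformly bounded above by a negative constant. Because the acute condition makes $A_3$ piecewise polynomial-trigonometric, each $\alpha_l$ is computable by integrating against $P_l(\cos\theta)\sin\theta$, but proving negativity uniformly in $l$ likely requires combining a closed-form generating-function expression with a Riemann--Lebesgue-style tail estimate, backed up by Mathematica for the finitely many small-$l$ cases. Granted this sign pattern, volume preservation kills the $l=0$ mode, the no-translation assumption kills the three $l=1$ modes, and the surviving sum satisfies
\begin{equation*}
\twodt \int_{S\times S\times S} f\, dx\, dy\, dz \bigg|_{t=0} \leq -c \, \|\mu(\cdot,0)\|_{L^2(S^2)}^2
\end{equation*}
for some $c > 0$. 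The strong negativity of the second variation then yields that the ball is a weak local maximum for $p$, via the same application of \cite{GF} as in the proof of Theorem 1.
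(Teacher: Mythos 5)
Your proposal is correct and follows essentially the same route as the paper: the same four-term second variation, cancellation of the first and fourth terms via the volume constraint, diagonalization of the autocorrelation term over spherical harmonics (your Funk--Hecke framing is equivalent to the paper's use of the $SO(3)$ correlation theorem), the translation trick to extract $-L$, and the sign pattern of the Legendre coefficients of $A_3$ as the key lemma. The one place you hedge --- proving $\alpha_l \leq 0$ uniformly in $l$ --- is resolved in the paper by computing $\int_0^\pi A_3(\theta)P_n(\cos\theta)\sin\theta\,d\theta$ in closed form as an explicit rational function of $n$ times $\pi$, whose sign can be read off directly, so no tail estimate is needed.
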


 \begin{proof}

 We provide some of the main details for the $3$-dimensional case. The rest of the proof is exactly the same as in $\R^2$, so we will only include the parts that are different. 

 \subsection{The second variational formula}

We start by considering a smooth one-parameter family of convex regions which preserves the volume and does not translate the region. The first variation is essentially identical to that for the disk, as is the proof that the ball is a weak critical point. 

  The second variation formula is also nearly identical to the two-dimensional case. However, here the $\theta_i$ correspond to points on the unit sphere, which we denote by $\mathbb{S}^2$.
 
\begin{flalign*} \label{2nd var 3D}
 \twodt \int_{S \times S \times S} f(x,y,z)  \,dx \, dy \, dz  = &   \\
= 3 \int_{\mathbb S^2}  \int_S  \int_S &  f(\theta_1 ,y,z) r(\theta_1,t) \dt \mu(\theta_1, t) \,dy \, dz \, d \theta_1 \\
+  6 \int_{\mathbb S^2} \int_{\mathbb S^2} \int_S &  f( r(\theta_1, t) \theta_1 , r(\theta_1, t) \theta_2,z) \mu(\theta_1, t) \mu(\theta_2, t) r(\theta_1,t) r(\theta_2,t) \, dz \,  d \theta_1 \, d \theta_2 \\
+  3 \int_{\mathbb S^2} \int_S \int_S  \frac{\partial}{\partial r}& f( r(\theta_1, t) \theta_1,y,z) r(\theta_1,t) (\mu(\theta_1))^2 \, dy \,d z \, d \theta_1  \\
+3 \int_{\mathbb S^2}  \int_S  \int_S &  f(\theta_1 ,y,z) \mu(\theta_1, t)^2 \,dy \, dz \, d \theta_1 
\end{flalign*}

 For the second term, we write $\mu = \sum_{i \geq 0, |m| \leq i} c_i^m Y_m^i$ and consider its autocorrelation:
  $$R_\mu(g) = \int_{x \in \mathbb{S}^2} \mu(x) \mu(xg) dS$$
  
  Here, $g$ is an element of $SO(3)$, which acts on $x$ by right rotation. Noting that the stabilizer of each point is $SO(2)$-subgroup of $SO(3)$, we consider the Fourier series of $R_\mu(g)$ in the symmetric space $g \in SO(3)/SO(2) \simeq \mathbb S^2$. Put more simply, we write $R_\mu$ as the sum of spherical harmonics:
 \[ R_\mu = \sum_{i,m} b_i^m Y_m^i \]  
  
  We verify several properties about the decomposition of $R_\mu$.
 
  \begin{lemma}[Properties of the Fourier series of an autocorrelation]
 
Consider the autocorrelation $R_\mu = \sum_{i,m} b_i^m Y_m^i$. Then the coefficients $b_i^m$ have the following properties.
\begin{enumerate}
\item For all $i, m$, $b_i^m \geq 0$ .
\item If $c_i^m \neq 0$, $b_0^m > 0$.
\item The $c_i^m$ terms do not contribute to $b_j^n$ for $n \neq m$. In particular, none of the higher $c_i^m$ terms contribute to $b_0^0$ or $b_0^1$, which are the only positive terms in the Fourier series of $A_3(\theta)$.
\end{enumerate}

\end{lemma}

This lemma follows directly from the convolution theorem on $SO(3)$. We include a more detailed discussion in the appendix.

  \subsection{Calculating the Fourier coefficients in three dimensions}

We now define $A_n(\theta)$ as the total mass of acute triangles when two points are chosen on the boundary of the $n$-ball with $\theta$ the minimal angle between them. From \cite{GH}, this is given by the following formula:
\begin{equation} \label{A 3D}
A_n(\theta)= \frac{\pi ^{\frac{n}{2}-\frac{1}{2}} \left(\int_{\frac{\theta}{2}}^{\frac{\pi }{2}} \sin ^n(t) \, dt+\int_0^{\frac{\theta}{2}} \cos ^n(t) \, dt\right)}{\Gamma \left(\frac{n}{2}+\frac{1}{2}\right)}-\frac{\pi ^{n/2} \left(\sin ^n\left(\frac{\theta}{2}\right)+1\right)}{\Gamma \left(\frac{n}{2}+1\right)}
\end{equation}


This is overusing $\theta$: it is simultaneously the minimal angle between two points on the sphere and the $\theta$ parameter in spherical coordinates. However, if we consider the first point to be the north pole of the sphere, then these two notions coincide exactly and the Fourier analysis is simpler without introducing yet another variable. We write $A_3(\theta)$ as the sum of spherical harmonics. To do this, we integrate $A_3(\theta)$ against spherical harmonics and show the following lemma.

\begin{lemma*}[Spherical harmonic decomposition of $A_3(\theta)$]
Let $Y^i_m$ be spherical harmonics where $m$ is the $\theta$ parameter and $i$ is the $\phi$ parameter.
Then we have the following:

   \begin{eqnarray*}
 \int_{S^2} A_3(\theta) Y_m^i(\theta, \phi ) \sin{\theta} \,d \theta \hspace{.3in}
  \begin{cases}
\, >0 \textrm{ for } m = 0,1 , i = 0  \\
\, = 0 \textrm{ for } i \neq 0 \\
 \, \leq 0 \textrm{ otherwise}  \\
\end{cases} 
   \end{eqnarray*}
\end{lemma*}

  Note that $A_3(\theta)$ does not depend on $\phi$. Due to this $\phi$-invariance, we have that \[ \int_{\mathbb{S}^2} A_3(\theta) Y^i_m \,dS  =0 \textrm{ when  }i \neq 0. \]
  Therefore, the lemma reduces to a computation in Legendre polynomials. Ignoring some scaling factors, we must show the following lemma.
 
 \begin{lemma}
 
 Let $P_n(\cos (\theta ))$ be the $n$-th Legendre polynomial evaluated at $\cos (\theta )$. Then the following inequalities hold:
 
   \begin{eqnarray*} \int_0^\pi A_3(\theta) P_n(\cos (\theta )) \sin{\theta} \,d \theta
  \begin{array}{ll}
>0 \textrm{ for } n = 0,1  \\
  \leq 0 \textrm{ otherwise}  \\
\end{array} 
   \end{eqnarray*}

\end{lemma}

We postpone the proof of this lemma to the appendix.

 
   

 \subsection{Finishing the proof}
 
 We are now able to replicate the argument from $\R^2$. We expand $\mu(\theta, t)$ in terms of spherical harmonics:
  $$\mu(\theta, t) = \sum_{i \geq 2, |k| \leq i} c^i_k(t)  Y_i^k.$$
  
  The assumption that our variation does not translate the ball and preserves volume implies that the $Y_0^0$, $Y_1^0$, $Y_1^1$ and $Y_1^{-1}$ components of $\mu(\theta, 0)$ are zero.


The rest of the proof is exactly the same in the 2-dimensional case. The first term relates the $L^2$ norm of $\mu$ to $\frac{\partial \mu}{\partial t}$ and exactly cancels out the fourth term. The two lemmas proven in this section control the second term, and show that it is negative whenever $c_0^0$, $c_0^1$, $c_1^1$ and $c_{-1}^{1}$ are zero. When we integrate $R_\mu(g)$ against $A_3(\theta)$, only the zonal terms contribute and the previous lemma shows that they have the necessary signs. 
To calculate the third term, the trick to compute $-L$ for the third term is exactly the same as before (we can set $\mu(\theta, \phi, t) = \cos (\theta) - \frac{t \sin^2(\theta)}{\sqrt{1-t^2\sin^2(\theta)}}$ to obtain a translation). However, we must use a modified Plancherel identity for the last term (see ~\cite{AT}) to show that the decomposition in terms of spherical harmonics preserves $L^2$ norm. Putting this together, the second variation is once again strongly negative so long as the variation maintains the volume and does not translate the ball. This then shows that the ball is a weak local maximum.

 \end{proof}

 We believe that this analysis can be reproduced in higher dimensions as well. Spherical harmonics have been calculated for the $n$-sphere \cite{FE} and Hall's computation provides a formula in arbitrary dimensions. We have not done so due to a lack of computational resources. 

\section{The disk is a strong local maximum}

We now prove that the disk is a local maximum in the Gromov-Hausdorff topology. The general strategy is to obtain a type of $C^{2,1/2}$-estimate on the probability functional $p(S)$ for regions close to the disk. We can then use this estimate to find a barrier function for the probability functional. Because the proofs of some of the lemmas involve lengthy estimates which obfuscate the main strategy, we will give the main overview in this section, and postpone the proofs of these lemmas to the appendix. 

\begin{theorem*}
Let $S$ be a convex subset of $\R^2$ and pick three points uniformly i.i.d. from $S$. We define $p(S)$ as the probability that these three points form an acute triangle. The disk is a local maximum for this function. More precisely, there exists $\epsilon$ such that given a region $S$ with $d_{H.}(D,S) < \epsilon$, then $p(S) < p(D)$. 
\end{theorem*}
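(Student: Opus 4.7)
The plan is to reduce the theorem to the infinitesimal information of Theorem~1. Given a convex $S$ with $d_{H.}(D, S) < \epsilon$, I would construct a canonical homotopy $S(t)$, $t \in [0,1]$, from $S(0)=D$ to $S(1)=S$, and then bound $\phi(t) := p(S(t))$ via a second-order Taylor expansion at $t=0$. For $\epsilon$ small, the origin lies strictly inside $S$ and $\partial S$ is the graph of a continuous radial function $\rho(\theta)$ with $\|\rho-1\|_{L^\infty} \lesssim \epsilon$. After a time-dependent similarity normalizing area and centroid (so that translations and dilations of $D$ are eliminated as trivial directions), I set $r(\theta, t) := 1 + t\mu(\theta)$ with $\mu := \rho - 1$; the resulting family is convex for every $t \in [0,1]$, and its single-valued parametrization is confirmed by the homotopy invariance of the degree of the radial-projection map $\partial S(t) \to S^1$. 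The regularity hypothesis stated at the start of Section~2 is met since $\partial_t r = \mu \in L^2$ and $\partial_t^2 r \equiv 0$, so Theorem~1 applies to this family at $t=0$.

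The technical heart of the argument is an \emph{a priori} uniform H\"older-$1/2$ estimate of the form
\[
\bigl| \phi''(t_1) - \phi''(t_2) \bigr| \;\leq\; C\, \epsilon^{\alpha}\, \sqrt{|t_1 - t_2|}\, \|\mu\|_{L^2}^2 \qquad (t_1, t_2 \in [0,1])
\]
for some $\alpha > 0$, with $C$ independent of $S$. The four-term decomposition \eqref{eq:2nd var} splits into three pieces depending on $t$ only through smooth radial integrals of $f$ against $\mu$ (easily Lipschitz, indeed smoother, in $t$) and one distributional piece that records the rate at which the right-triangle locus is swept out by the moving boundary in $S(t)^3$. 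A careful slicing of this locus by the angle at the boundary vertex shows that the relevant Jacobian degenerates like a square root near the right-angle threshold; this is precisely what forces a H\"older-$1/2$ modulus rather than a Lipschitz one and, as remarked in the introduction, appears to preclude an honest $C^3$ estimate. The crucial technical point is to extract the factor $\|\mu\|_{L^2}^2$ in the modulus so that the estimate absorbs into the strongly negative bound coming from Theorem~1, rather than competing with it in scale.

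With the H\"older bound in hand, the conclusion is immediate. Theorem~1 yields $\phi''(0) \leq -L\|\mu\|_{L^2}^2$ for some $L > 0$ once the normalization has killed the $c_0, c_1, d_1$ Fourier modes. Combining with the H\"older estimate gives $\phi''(t) \leq -\tfrac{L}{2}\|\mu\|_{L^2}^2$ uniformly on $[0,1]$ whenever $C\epsilon^{\alpha} < L/2$. Since $\phi'(0) = 0$ by Lemma~1, integrating twice produces the super-solution bound
\[
\phi(1) - \phi(0) \;\leq\; -\tfrac{L}{4}\|\mu\|_{L^2}^2 \;<\; 0,
\]
which is exactly $p(S) < p(D)$ whenever $S \neq D$ modulo the normalizations. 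The main obstacle is therefore the H\"older estimate of the second paragraph, and specifically the securing of the $\|\mu\|_{L^2}^2$ factor in the modulus of the distributional third term using only $C^0$-closeness of $\partial S$ to $\partial D$; delivering that uniform estimate is what upgrades Theorem~1 from a statement in the $C^1$ topology to one in the Hausdorff topology.
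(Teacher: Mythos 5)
Your overall architecture coincides with the paper's: normalize $S$ by a similarity so that the zeroth and first Fourier modes of the radial graph function vanish, run a radial linear homotopy from $D$ to $S$, establish a uniform H\"older-$1/2$ estimate on the second derivative of the probability along that homotopy, and combine it with the strict negativity from Theorem~1 to integrate twice. The difficulty is that the two load-bearing steps are asserted rather than proved. First, the normalization: a ``time-dependent similarity normalizing area and centroid'' does not by itself kill the modes you need --- unit area does not give $\int_0^{2\pi} g\,d\theta = 0$ and a centered centroid does not give $\int_0^{2\pi} g(\theta) e^{i\theta}\,d\theta = 0$, because these are integrals of the radial graph function rather than moments of the region; they agree only to first order in $g$. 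The paper must prove the existence of such a similarity (its Lemma~10) by a degree argument on the $3$-parameter family of translations and dilations. You do invoke degree theory, but for single-valuedness of the radial parametrization, which already follows from convexity together with the origin lying in the interior, so the topological input lands in the wrong place.

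Second, and more seriously, the uniform H\"older-$1/2$ estimate on $\phi''$ --- which you correctly identify as ``the main obstacle'' --- is exactly where the content of the theorem lives, and a one-sentence heuristic about a square-root degeneration of a Jacobian does not substitute for it. The paper's proof of this estimate occupies essentially the entire appendix: it splits $\tfrac{d^2M}{dt^2}$ into the autocorrelation term, the distributional $\partial f/\partial r$ term, and the arc-length term, and controls each by excising $O(\epsilon^{1/2})$-size bad sets of configurations (near-tangencies, $|X-Y|$ small, $X$ and $Y$ near-antipodal) together with a quantitative transversality lemma for the right-angle loci crossing $\partial D$; only $C^0$-closeness and convexity are available, so none of this is routine. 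A smaller bookkeeping gap: your homotopy does not preserve volume, so $\phi''$ is not the second variation of $M$ alone; the paper derives the full differential identity \eqref{eq:Prob DE} with the $\tfrac{dV}{dt}$ and $\tfrac{d^2V}{dt^2}$ terms and shows those are $O(t)$ via a separate Lipschitz estimate on $p$ itself. As a blueprint your proposal is faithful to the paper; as a proof it postpones precisely the parts that require the work.
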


\begin{proof}

Let $S \subset \R^2$ be a bounded convex region with $d_{H.}(D,S) < \epsilon$. We assume that $\epsilon < \frac{1}{8 \sqrt{2 \pi}}$, to simplify later estimates.  As a result, $S$ necessarily contains the origin and we can write $S$ in terms of polar coordinates: $$S = \{ re^{i \theta} ~ | ~ r \leq 1 + g(\theta) \}.$$
Note that since $d_{H.}(D,S) < \epsilon$, this implies that $|g(\theta)| < \epsilon$. 

We refine this map by choosing a nice initial embedding of $S$. In particular, we want to choose an embedding for which the zeroth and first Fourier coefficients of $g(\theta)$ are zero. If the zeroth and first Fourier coefficients of $g(\theta)$ are non-zero and all of the others vanish, the second variation of the probability at $t=0$ is $0$, which complicates our estimates. To avoid this problem, we translate and dilate $S$ (doing so implicitly redefines $g(\theta)$), as shown in the following lemma. 


\begin{lemma}

There is a convex set $\bar S  = \{ re^{i \theta}~ |~ r \leq 1 + \bar g(\theta) \}$ with the following properties.

\begin{enumerate}
\item $\bar S$ is similar to $S$. 
\item $\bar S$ can be obtain by translating $S$ by no more than $3\epsilon$ and dilating $S$ by a factor between $1-3\epsilon$ and $1+3\epsilon$. Furthermore, $d_{H.}(S, \bar S) < 7 \epsilon$.
\item The function $\bar g(\theta)$ satisfy the following:
 $$ \int_0^{ 2 \pi} \bar g(\theta) d \theta = 0 \hspace{2cm} \int_0^{ 2 \pi} \bar g(\theta) e^{\ i \theta} d \theta = 0$$
\end{enumerate}
\end{lemma}


We postpone the proof of this lemma to the appendix. For the rest of the proof, we will utilize the following abuse of notation. Instead of writing $\bar S$, we always deal with the case where $S$ is embedded so that $g$ satisfies $ \int_0^{ 2 \pi}  g(\theta) d \theta = 0 $ and $ \int_0^{ 2 \pi}  g(\theta) e^{\ i \theta} d \theta = 0$. We will also assume that $S$ has Hausdorff distance from $D$ at most $8 \epsilon$. 

\subsection{The canonical homotopy}

We now construct a particular homotopy from the disk to $S$. 
 We denote this homotopy as $S(t)$ and refer to it as the {\em canonical homotopy}:

\begin{equation}
 S(t)= \{ re^{i \theta} \in \R^2 ~|~ r \leq 1 + \frac{t}{\|g\|_2} g(\theta) \}
 \end{equation}

Before going further, we note several properties of the canonical homotopy.

\begin{enumerate}
\item We do not make any smoothness assumptions on $S$, other than those that follow from convexity. As such, we do not have a point-wise estimate on $\mu(\theta,t) := \frac{g(\theta)}{\|g\|_2}$, only a $L^2$ estimate. If $S$ is not a disk, then $d_{H.}(S,D) >0$, and we can use the convexity of $S$ to establish a lower bound on $\|g\|_2$. To see this, one can use the convexity of $S$ to obtain a lower bound on the area of $S \Delta D$ in terms of $d_{H.}(S,D)$. This shows that the canonical homotopy is well defined when $S$ is not a disk. 
We also have an upper estimate $\|g \|_2 <  8 \sqrt{2 \pi} \epsilon$. By our assumption on $\epsilon$, this implies that $\|g \|_2 <1$.
\item  Note that whenever the $S$ is not a disk, we have that $S(0) =D$, $S = S(\|g\|_2)$ and $\mu(\theta, t) = \frac{g(\theta)}{\|g\|_2}$. 
\item As $d_{H.}(S(t),D)$ is increasing in $t$, we have that $d_{H.}(S(t),D)< 8 \epsilon$ for all $t < \|g\|_2$. 
\item Because $S$ is convex, $S(t)$ is convex for $0\leq t \leq \|g\|_2$. However, $S(t)$ may be non-convex for all $t > \|g\|_2$.
\item Observe that $\mu(\theta,t)= \frac{g(\theta)}{\|g\|_2}$ is independent of $t$, so we will often denote it as $\mu(\theta)$ for conciseness.
\end{enumerate}


\subsection{The variations of the probability}

For Theorem 1, we did not compute the second variation of the probability directly, as we assumed that the variation preserved area. The canonical homotopy does not preserve area and so we calculate the variations of the probability.


\[ p(S) :=  \frac{\int_S \int_S \int_S f(x,y,z) \,  dx \,  dy \,  dz}{Vol(S)^3} \]

For convenience, denote $M := \int_{S(t)} \int_{S(t)} \int_{S(t)} f(x,y,z) \,  dx \,  dy \,  dz$ and $V := Vol(S(t))$.
Using this notation, the first two variations of $p$ are the following:

\begin{flalign*}
 \frac{d}{dt} p = & \frac{dM}{dt} V^{-3} - 3 M V^{-4} \frac{dV}{dt}   
\end{flalign*}

\begin{flalign*}
 \twodt p = & \frac{d^2M}{dt^2} V^{-3} - 6 V^{-4} \frac{dM}{dt}  \frac{dV}{dt}  +12  M V^{-5} \left( \frac{dV}{dt} \right)^2 - 3 M V^{-4} \frac{d^2 V}{dt^2}  
\end{flalign*}

The formula for $ \twodt p$ can be simplified using the equation for $\dt p$.

\begin{flalign*}
 \twodt p = & \frac{1}{V^3} \left( \frac{d^2M}{dt^2} -  6 \frac{dV}{dt} \left( V^2  \frac{dp}{dt} + pV \frac{dV}{dt}  \right) - 3 pV^2 \frac{d^2 V}{dt^2} \right) \\
\end{flalign*}

We omit the proof that the disk is a critical point of the probability functional along the canonical homotopy, as it is exactly the same as the earlier proof of this fact.
We now calculate the first two variations of $V$. Recalling that we chose our embedding of $S$ to satisfy $ \int_0^{ 2 \pi} g(\theta) d \theta = 0$, we can calculate $V(t)$ explicitly. Simplifying and using the normalization $\| \mu \|_2 =1$, we obtain the following.

\begin{equation}\label{eq:Prob DE}
V^{3} \twodt p  =  \frac{d^2M}{dt^2} - 3p \pi - 6 \pi t  \frac{dp}{dt} -9 \pi  t^2 p \\
  -6 \pi t^3  \frac{dp}{dt} - \frac{15}{4} t^4  p - \frac{3}{2} t^5 \frac{dp}{dt} \\
\end{equation}

\subsection{Analyzing the differential inequality}

To understand solutions to \eqref{eq:Prob DE}, we use the following ansatz. For small $t$, all of the terms on the right hand side other than $\frac{d^2M}{dt^2} - 3p \pi$ are uniformly small. We will show that this term is negative, and so the entire equation is negative. This shows that $p(S(t))$ is strictly concave in a neighborhood of zero, which we use to prove the theorem.

 We will prove all of the estimates in terms of $S$. However, since the estimates depend only on the Hausdorff distance from the disk and the convexity of the region, they apply uniformly to $S(t)$ for all $t < \|g\|_2$.


To start, note that $\frac{d^2M}{dt^2}  - 3 p \pi <0$ for the disk (i.e., when $t = 0$). This follows from Theorem 1 and the fact that $\int_0^{2 \pi} g(\theta) e^{i \theta} \,d\theta = 0$, which shows the following: $$\left( \frac{d^2M}{dt^2}  - 3 p \pi \right) \Bigg |_{t=0} < -L=  \frac{3}{4 \pi} (  8 - \pi^2 ) \pi^2$$
 To show that this inequality remains true near the disk, we use the following lemmas, which show that $p$ is uniformly Lipschitz and $\frac{d^2M}{dt^2}$ is uniformly H\"older-$1/2$ continuous.

\begin{lemma}[The Lipschitz estimate on $p(S)$]
There exist uniform constants $ e,~ C > 0$ so that whenever $\epsilon < e$ and $d_{H.}(S,D) \leq \epsilon$, then $ \bigl\lvert  p(S) - p(D)  \bigl\rvert < C \epsilon $.
\end{lemma}

\begin{lemma}[The H\"older-$1/2$ estimate on  on $\frac{d^2M}{dt^2}$] 
There exist uniform constants $ e,~ C >0$ so that whenever $\epsilon < e$ and $d_{H.}(S,D) \leq \epsilon$, then $ \bigl\lvert \frac{d^2}{dt^2} M (S) - \frac{d^2}{dt^2} M (D) \bigl\rvert < C \epsilon^{1/2} $.
\end{lemma}
 
 Together, these immediately imply the following corollary.
 
 \begin{cor}
There exist uniform constants $C,~ t_0, \epsilon >0$, so that whenever $t < t_0$ and $d_{H.}(S,D) \leq \epsilon$, then $ | (\frac{d^2M}{dt^2} - 3p \pi) + L |< C t^{1/2} $ along the canonical homotopy.
\end{cor}

We now use this corollary to complete the proof. Note that the Lipschitz estimate controls $\frac{dp}{dt}$, which confirms our ansatz that the terms with a factor of $t$ are uniformly small for small $t$. The proofs of these lemmas are composed of careful, tedious estimates on the Holder constant of each the terms in the second variation. We postpone the proofs of the two lemmas to the appendix.

From the preceding H\"older estimate, we can find a uniform $\epsilon_0$ so that whenever $d_{H.}(S,D) \leq \epsilon_0$, we have that $\frac{d^2M}{dt^2} - 3p \pi \leq - \frac{L}{2}$ for all $t$ satisfying $0 \leq t \leq \|g\|_2$ along the canonical homotopy $S(t)$.
Note that we can choose $\epsilon_0$ so that this inequality holds for all $t$ up to $ \|g\|_2$ because $t_0$ in the corollary is uniform, and $\sqrt{2 \pi} d_{H.}(S,D) \geq \|g\|_2$. In other words, by taking $\epsilon_0$ sufficiently small, we can force $t_0 \geq \|g\|_2$. For the rest of the proof, we suppose that $\epsilon_0$ is sufficiently small in order to do so.

Along the canonical homotopy, the probability satisfies the following differential equation:
\begin{flalign*}
(\pi +t^2)^3 \twodt p  = & \frac{d^2M}{dt^2} - 3p \pi - 6 \pi t  \frac{dp}{dt} -9 \pi  t^2 p \\
 &  -6 \pi t^3  \frac{dp}{dt} - \frac{15}{4} t^4  p - \frac{3}{2} t^5 \frac{dp}{dt} \\
\end{flalign*}

If $d_{H.}(S,D)< \epsilon_0$, $p(S(t))$ satisfies the following differential inequality for $0 \leq t \leq \|g\|_2$ along the canonical homotopy:

\begin{flalign*}
(\pi +\frac{t^2}{2})^3 \twodt p  \leq & -\frac{L}{2} - 6 \pi t  \frac{dp}{dt} -9 \pi  t^2 p \\
 &  -6 \pi t^3  \frac{dp}{dt} - \frac{15}{4} t^4  p - \frac{3}{2} t^5 \frac{dp}{dt} \\
\end{flalign*}

Furthermore $p(0) = \frac{4}{\pi^2} - \frac18$  and $p^\prime(0) = 0$, as the disk is a critical point for $p$. For $t$ small, consider the barrier function $\bar p(t) = \frac{4}{\pi^2} - \frac18 - \frac{L}{12 \pi^3}t^2$. This function has the same initial conditions as $p$, and satisfies the following:
 
\begin{flalign*}
\left( \pi +\frac{t^2}{2} \right)^3 \twodt \bar p  \geq & -\frac{L}{2} - 6 \pi t  \frac{d \bar p}{dt} -9 \pi  t^2 \bar p \\
 &  -6 \pi t^3  \frac{d \bar p}{dt} - \frac{15}{4} t^4 \bar p - \frac{3}{2} t^5 \frac{d \bar p}{dt} \\
\end{flalign*}

 Therefore, $\bar p$ is a super-solution to the preceding initial value problem, so is an upper bound of $p$. Since $\bar p (t) < \bar p(0)$ whenever $t \neq 0$, this implies that $p(S)< p(D)$.

\end{proof}

 We expect this theorem to hold in $\R^n$ for $n>2$ as well, with a very similar proof. However, the technical lemmas will be incredibly tedious to prove in higher dimensions.

\section{Regions with large isoperimetric ratio have small probability}

We now consider the case when the convex region $S$ is far from being a disk. Heuristically, we expect $p(S)$ to be very small in this case.  If we pick three points from a line segment, they have zero probability of forming a convex triangle. Furthermore, convex regions with large isoperimetric ratio are close to line segments in the Gromov-Hausdorff sense \cite{MG}, so we expect the probability to be small in such a region. This qualitative result is well known in the literature, but we want an effective quantitative version of this, as it may be useful for a full proof of Hall's conjecture. In particular, we prove the following. 

\begin{theorem*}
Let $S$ be a convex subset of $\R^2$ whose isoperimetric ratio is greater than $\frac{7688}{15}$. Then $p(S) < p(D)$. Furthermore, if we denote the isoperimetric ratio of $S$ by $R$, then $p(S) \lesssim R^{-1}$.
\end{theorem*}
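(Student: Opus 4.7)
The overall strategy is to exploit the heuristic that a convex region with large isoperimetric ratio is long and thin, so that three uniform samples are nearly collinear and rarely form an acute triangle. Let $S$ be a convex region with perimeter $L$, area $A$, diameter $d$, and isoperimetric ratio $R = L^2/A$. I would first orient coordinates so that the diameter is realized by the points $(0,0)$ and $(d,0)$, and let $H$ denote the $y$-extent of $S$. Two elementary geometric inequalities then control $H$: the bounding rectangle gives $A \leq dH$, while $S$ contains the two triangles formed by the diameter endpoints and the extremal points in $\pm y$, yielding $A \geq dH/2$. Combined with the standard bound $L \geq 2d$ (each of the two boundary arcs between the diameter endpoints has length at least $d$), the first inequality gives
\[
R = \frac{L^2}{A} \geq \frac{4d^2}{dH} = \frac{4d}{H},
\]
so $H/d \leq 4/R$.

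The next step is to unpack the acuteness condition geometrically. If three random points $P_1, P_2, P_3$ are relabeled so that $x_1 \leq x_2 \leq x_3$, the angle at the middle vertex $P_2$ is acute if and only if $(P_1 - P_2) \cdot (P_3 - P_2) > 0$, which expands to
\[
(y_1 - y_2)(y_3 - y_2) > (x_2 - x_1)(x_3 - x_2).
\]
Since the left-hand side is bounded by $H^2$, acuteness forces $(x_2 - x_1)(x_3 - x_2) < H^2$, and in particular $\min(x_2 - x_1,\, x_3 - x_2) < H$. Thus the acute event is contained in the event that some pair of the three points has $x$-coordinates within distance $H$.

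To bound the probability of the latter, let $\rho(x) = \ell(x)/A$ be the $x$-marginal density of the uniform measure on $S$, where $\ell(x)$ is the length of the vertical cross-section. Since $\ell(x) \leq H$ we have $\|\rho\|_\infty \leq H/A$, hence
\[
P(|x_i - x_j| \leq H) \;\leq\; 2H \, \|\rho\|_\infty \;\leq\; 2H^2/A.
\]
A union bound over the three pairs gives $p(S) \leq 6H^2/A$. Combining with $H^2/A \leq 2H/d \leq 8/R$ (from the first paragraph) yields
\[
p(S) \;\leq\; \frac{48}{R},
\]
which immediately establishes the asymptotic statement $p(S) \lesssim R^{-1}$.

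For the explicit threshold, the remaining task is to compare $48/R$ to $p(D) = 4/\pi^2 - 1/8$ and solve for $R$. The estimate above gives $p(S) < p(D)$ for $R$ exceeding roughly $384\pi^2/(32 - \pi^2) \approx 171$, which already suffices for the first conclusion with a somewhat weaker constant. The main hurdle in reaching precisely $7688/15$ will be tightening the intermediate inequalities: the union bound (which double-counts configurations where several pairs are close); the $L^\infty$ bound on $\rho$ (which can be refined, since $\ell(x)\to 0$ near the tips of $S$ and so most of the marginal mass is concentrated far from the endpoints); and the geometric bound $H/d \leq 4/R$ (which treats $L$ as only $2d$ and ignores the ``side'' contribution $\lesssim H$ to the perimeter). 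I expect that keeping careful track of these refinements, so that the resulting rational constants combine into the clean number $7688/15$, is where the bulk of the work lies---but this is bookkeeping rather than a conceptual obstacle.
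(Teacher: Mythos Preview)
Your argument is correct and in fact sharper than the paper's; the only defect is the final paragraph, where you have the direction of refinement backwards. Your bound $p(S)\le 48/R$ already yields $p(S)<p(D)$ once $R>384\pi^{2}/(32-\pi^{2})\approx 171$. Since $7688/15\approx 512.5$ is \emph{larger} than this threshold, the theorem as stated follows immediately from what you have already established---no tightening is required. The constant $7688/15$ is not a target to be hit exactly; it is simply the cruder number that drops out of the paper's method.

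The paper takes a genuinely different route. After normalizing the longest chord to length $1$ and letting $\bar h$ denote the height, it partitions $S$ into $N$ vertical strips of equal area and observes that when $N\le 1/(2\bar h)$ each strip is wider than it is tall; three points lying in pairwise non-adjacent strips then form an obtuse triangle by elementary geometry. The probability that three uniform indices from $\{1,\dots,N\}$ are pairwise non-adjacent is $\binom{N-2}{3}/N^{3}$, and one needs $N\ge 30$ to push the complementary probability below $p(D)$. Translating $N=30$ back through the perimeter--area inequality $R\le 8(1+2\bar h)^{2}/\bar h$ at $\bar h=1/60$ produces $7688/15$. Your approach replaces this discretization by a direct continuous argument: the observation that acuteness of the angle at the $x$-middle vertex forces $(x_{2}-x_{1})(x_{3}-x_{2})<H^{2}$ reduces the acute event to a pairwise $x$-proximity event, which you control via the $L^{\infty}$ bound on the $x$-marginal. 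This is both cleaner and quantitatively stronger, improving the threshold by roughly a factor of three.
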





\begin{proof}

To begin, we reposition $S$ so as to define its ``height." To do this, consider $L$ the longest segment contained in $S$. We perform a rotation and dilation of $S$ so that $ L= \{ (t,0) ~|~ 0 \leq t \leq 1 \}$. 

This then allows us to define the corresponding ``height" of the region S. To do so, we define the height as the following: \[h(x) = \sup_{(x,t) \in S} t - \inf_{(x,t) \in S} t .\]
We then define the supremum of the heights as $\bar h = \sup_{x \in [0,1]} h(x)$.

 Using the fact $S$ is convex and the definition of the height, it follows that $S$ contains the quadrilateral that has vertices at the origin, $(1,0)$, and the points $(x_0,y_0)$, $(x_0,y_0+\bar h)$ for some $(x_0,y_0) \in \R^2$. This quadrilateral has area at least $\frac{\bar h}{2}$. Similarly, if we consider the functions $g(x) = \sup_{(x,t) \in S} \,t$ and  $f(x) =\inf_{(x,t) \in S} \,t$, we see that both $g(x)$ and $-f(x)$ are convex and bounded above by $\bar h$. Thus, the arc-length of $f$ and $g$ are both bounded by $1+ 2 \bar h$ and so the perimeter is bounded by $2+4 \bar h$. 
  Thus, the isoperimetric ratio of $S$ is at most $\frac{8(1+2 \bar h)^2}{\bar h}$. Hence, if the isoperimetric ratio is very large, $\bar h$ must be very small. This provides an effective estimate that convex regions with large isoperimetric ratio are very long and thin.

In order to translate this estimate into a form that is useful for Hall's problem, we partition $S$ into $N$ subsets in the following way. We let $S_i = \{ (t,y) \in S |~ l_i \leq t \leq l_{i+1} \}$ and pick each cut-off $l_i$ so that that all of the subsets $S_i$ have equal area. Then, we have \[ S = {\bigcup}_{i=1}^N S_i. \]  Strictly speaking, this collection is not a partition of $S$ as the intersection $S_i \cap S_{i+1} \neq \varnothing$, but the intersection of $S_i$ and $S_{i+1}$ has zero measure so can be ignored in this context. Therefore, when we pick three points uniformly at random from $S$, that corresponds to picking three subsets $S_i$ uniformly at random.

 If $N$ is small enough so that each $S_i$ has area greater than $\bar h^2$ ( i.e. $N \leq \frac{1}{2 \bar h}$), then each $S_i$ is wider than it is tall (that is to say, $l_{i+1}-l_i > \bar h$). If we then have three points in distinct non-adjacent regions $S_i$, $S_j$, and $S_k$, it can be seen using Euclidean geometry that the triangle they form is obtuse.
 Therefore, we have that $ 1- p(S)$ is greater than the probability of picking 3 elements uniformly at random from $1, \ldots, N$ such that no two are adjacent or are the same. Using a counting argument, we find that this probability is given by 
\[ \dfrac{ \binom{N-3+1}{3}}{N^3} = \frac{(N-4) (N-3) (N-2)}{N^3}. \] 

 As such, it immediately follows that $ p(S) < 1- \frac{(N-4) (N-3) (N-2)}{N^3}$.


Therefore, if the isoperimetric ratio of $S$ is sufficiently large, we can choose $N$ large enough (while still satisfying $N \leq \frac{1}{2 \bar h}$) so that $\frac{(N-4) (N-3) (N-2)}{N^3}> 9/8-4/\pi^2$. In this case, a random triangle in $S$ is less likely to be acute than a random triangle in the disk. This inequality is satisfied whenever $N \geq 30$. If there are $30$ regions of equal area that are wider than they are tall, then the probability of choosing an acute triangle is smaller than with a disk.

Therefore, if the isoperimetric ratio of $S$ is larger than $7688/15 \approx 512.533$, then we must have $p(S) < p(D)$, so these regions cannot achieve the supremum of the probability. From this, we also see that as the isoperimetric ratio $R$ goes to infinity, the probability of choosing an acute triangle is at most $O(R^{-1})$.

\end{proof}

 The main use of this estimate is to show the supremum of the probability is achieved in a compact set of isoperimetric ratios. Since the only non-compact parts of the moduli space of convex figures (in the $C^0$-norm) correspond to blow-ups of the isoperimetric ratio, the supremum is achieved on a compact part of the moduli space in the $C^0$ topology.

This argument can be generalized somewhat to higher dimensions. We can use similar analysis to show that a unit volume convex region in $\R^3$ with large surface area is close to a two dimensional domain in Hausdorff distance. A priori, however, random triangles in planar regions do not have smaller probability of being acute than random triangles in the ball. In order to show this, the most natural way to do it would be to first prove the two-dimensional Hall's conjecture. 

\section{A potential route to proving Hall's conjecture in two dimensions}

Using the results of Theorem 3 and 4, it is possible to prove Hall's conjecture using a finite calculation. To do this, one uses Theorem 3 to pick $\epsilon_1>0$ and $\delta>0$ so that $\epsilon_1<  d_{H.}(S,D) <2 \epsilon_1$ implies that $p(S) < p(D)- \delta$. One then considers the set $K:= \{ S~ |~ Vol(S)=1, \, d_{H.}(S,D) > 2 \epsilon_1 \textrm{ and }\frac{ L(\partial S)^2}{Vol(S)} < 7688/15 \}$. Since the probability varies continuously, there is an open cover of $K$ in which the probability varies by less than $\delta/4$. Since $K$ is compact, there is a finite subcover $U_i$. Therefore it suffices to check that $p(x_i) < p(S) - \delta$ where $x_i$ is an arbitrary convex region in $U_i$. 

 
Although there are only a finite number of cases to check, the fact that the space of convex regions is infinite dimensional makes the number of cases extremely large. It is possible to put some crude bounds on the number of open sets of radius $d$ needed to cover the set of unit volume convex regions with a given bound on the isoperimetric ratio. It is possible to derive a naive estimate of $O(C^{1/d^2})$, with $C$ growing exponentially in the bounds of the isoperimetric ratio. Using the convexity condition carefully we believe that this can be sharpened to $O(C^{1/d})$, but this is still very fast growing. 
 Our Lipschitz estimates on $p$ can be used to give a lower bound for $d$, but this will be quite small. With the current estimates, the number of calculations necessary is extremely large and so the computation is completely intractable. However, there is a fair amount of optimization that can be done. We should also note that computing $p(S)$ for any region is very computationally difficult. If we can estimate the probability effectively, then this would be helpful in making this program realistic.

  \section{Acknowledgements and some resources}

This work was partially supported by DARPA/ARO Grant W911NF-16-1-0383 (PI: Jun Zhang, University of Michigan).

While working on this problem, I relied on the help of many different people. I am especially thankful to Kori Khan for her support and contributions. In particular, she observed that the distribution of the angle between two points on the circle could be expressed as an autocorrelation integral. This insight was crucial to finding the right approach for the problem.

I am grateful to the following people; Bob Stanton and Mike Belfanti for their help with the representation theoretic elements of the proof; Tom Needham for his helpful suggestions; Mizan Khan for help editing this paper.

 On a personal note, Glen Hall was my advisor through my undergraduate studies and I am extremely grateful for his mentorship. He first introduced me to this problem as a suggested project for an undergraduate thesis. I was unable to make any progress at the time, but over the years I regularly revisited the problem. 
 
 While working on this project, two extremely useful resources were the papers of Eisenberg and Sullivan ~\cite{ES}~\cite{ES2}. The former gives a good background on random triangles, with some very useful computations. The latter gives an excellent exposition about Crofton's equation and its generalizations and was extremely valuable to my understanding of the problem. Another resource that was indispensable was Geogebra, which could be used to generate manipulable figures to help understand the geometry. For the interested reader, I have created some notebooks in GeoGebra that might be helpful in visualizing the terms in the calculation and would be willing to provide them.

 \bibliography{Hallbib.bib}

\begin{thebibliography}{CNSS17}

\bibitem[Ala77]{VA}
VS~Alagar.
\newblock On the distribution of a random triangle.
\newblock {\em Journal of Applied Probability}, 14(2):284--297, 1977.

\bibitem[Bad77]{AB}
Adrian Baddeley.
\newblock Integrals on a moving manifold and geometrical probability.
\newblock {\em Advances in Applied Probability}, 9(3):588--603, 1977.

\bibitem[Bla23]{WB}
Wilhelm Blaschke.
\newblock Vorlesungen uber differential geometrie ii.
\newblock {\em Affine Differentialgeometrie}, 1923.

\bibitem[Bro80]{SB}
Simon Broadbent.
\newblock Simulating the ley hunter.
\newblock {\em Journal of the Royal Statistical Society. Series A (General)},
  pages 109--140, 1980.

\bibitem[BS95]{CBRS}
Christina Bauer and Rolf Schneider.
\newblock Extremal problems for geometric probabilities involving convex
  bodies.
\newblock {\em Advances in applied probability}, 27(1):20--34, 1995.

\bibitem[Car58]{LC}
Lewis Carroll.
\newblock {\em Mathematical recreations of Lewis Carroll: Pillow problems and a
  tangled tale}, volume 492.
\newblock Courier Corporation, 1958.

\bibitem[CNSS17]{CNSS}
Jason Cantarella, Tom Needham, Clayton Shonkwiler, and Gavin Stewart.
\newblock Random triangles and polygons in the plane.
\newblock {\em arXiv preprint arXiv:1702.01027}, 2017.

\bibitem[ES96]{ES}
Bennett Eisenberg and Rosemary Sullivan.
\newblock Random triangles in $n$ dimensions.
\newblock {\em The American mathematical monthly}, 103(4):308--318, 1996.

\bibitem[ES00]{ES2}
Bennett Eisenberg and Rosemary Sullivan.
\newblock Crofton's differential equation.
\newblock {\em The American Mathematical Monthly}, 107(2):129--139, 2000.

\bibitem[FE12]{FE}
Christopher Frye and Costas~J Efthimiou.
\newblock Spherical harmonics in p dimensions.
\newblock {\em arXiv preprint arXiv:1205.3548}, 2012.

\bibitem[GF63]{GF}
IM~Gelfand and SV~Fomin.
\newblock Calculus of variations. revised english edition translated and edited
  by richard a. silverman.
\newblock {\em Prentice Hall, Englewood Cli s, NJ}, 7:10--11, 1963.

\bibitem[GK17]{GK}
Olga Goulko and Adrian Kent.
\newblock The grasshopper problem.
\newblock {\em Proc. R. Soc. A}, 473(2207):20170494, 2017.

\bibitem[Gro81]{MG}
Mikhael Gromov.
\newblock Structures m{\'e}triques pour les vari{\'e}t{\'e}s riemanniennes.
\newblock {\em Textes Math{\'e}matiques [Mathematical Texts]}, 1, 1981.

\bibitem[Guy93]{GR}
Richard~K Guy.
\newblock There are three times as many obtuse-angled triangles as there are
  acute-angled ones.
\newblock {\em Mathematics Magazine}, 66(3):175--179, 1993.

\bibitem[Hal82]{GH}
Glen~Richard Hall.
\newblock Acute triangles in the n-ball.
\newblock {\em Journal of Applied Probability}, 19(3):712--715, 1982.

\bibitem[Ken85]{DK}
David~G Kendall.
\newblock Exact distributions for shapes of random triangles in convex sets.
\newblock {\em Advances in Applied Probability}, 17(2):308--329, 1985.

\bibitem[Kha18]{Website}
Gabriel Khan.
\newblock {Hall's Problem} gabekhan.
\newblock \url{sites.google.com/a/umich.edu/gabekhan/code/halls_problem}, 2018.
\newblock [Online; accessed: 2018-12-05].

\bibitem[KM63]{KM}
M.G. Kendall and P.A.P. Moran.
\newblock {\em Geometrical probability}.
\newblock Griffin's statistical monographs \& courses. C. Griffin, 1963.

\bibitem[Lan69]{EL}
Eric Langford.
\newblock The probability that a random triangle is obtuse.
\newblock {\em Biometrika}, 56(3):689--690, 1969.

\bibitem[Pay67]{EP}
Lawrence~E Payne.
\newblock Isoperimetric inequalities and their applications.
\newblock {\em SIAM review}, 9(3):453--488, 1967.

\bibitem[Pey97]{NP}
Norbert Peyerimhoff.
\newblock Areas and intersections in convex domains.
\newblock {\em The American mathematical monthly}, 104(8):697--704, 1997.

\bibitem[Por94]{SP}
Stephen Portnoy.
\newblock A lewis carroll pillow problem: Probability of an obtuse triangle.
\newblock {\em Statistical Science}, 9(2):279--284, 1994.

\bibitem[PP12]{PP}
Grigoris Paouris and Peter Pivovarov.
\newblock A probabilistic take on isoperimetric-type inequalities.
\newblock {\em Advances in Mathematics}, 230(3):1402--1422, 2012.

\bibitem[Roe18]{538}
Oliver Roedel.
\newblock Will the grasshopper land in your yard?--{FiveThirtyEight}.
\newblock
  \url{https://fivethirtyeight.com/features/will-the-grasshopper-land-in-your-yard/},
  2018.
\newblock [Online; accessed 12-06-18].

\bibitem[RT08]{RT}
M~Ruzhansky and V~Turunen.
\newblock Representation theory of compact groups --lecture notes, 2008.

\bibitem[Ter12]{AT}
A.~Terras.
\newblock {\em Harmonic Analysis on Symmetric Spaces and Applications I}.
\newblock Springer New York, 2012.

\bibitem[Woo86]{WW}
WSB Woolhouse.
\newblock Solution by the proposer to problem 1350.
\newblock {\em Mathematical Questions, with their Solutions from the
  Educational Times}, 1:49--51, 1886.

\end{thebibliography}
\bibliographystyle{alpha}

\appendix

\section{}

Until now, we have tried to avoid too many technical details so as to focus on the overall structure of the proof. However, for completeness we must prove the lemmas used in our theorems. For convenience, we will restate each of the lemmas at the beginning of each section. For the lemmas in Theorem 3, we found Geogebra to be extremely useful for visualizing the geometry. We strongly recommend using this or some similar software to help gain familiarity with the approach, especially for the final parts. With a good picture, it is not too difficult to see what terms should be controlled and what general strategies should be used. The Geogebra files that we used are available online \cite{Website}.

\section{The proof of the lemmas used in Theorem 2}

\begin{lemma*}[Properties of the Fourier series of an autocorrelation]
 
Suppose that $\mu: \mathbb{S}^2 \to \mathbb{R}$ is a smooth function which decomposes as $\mu = \sum_{i,m} c_i^m Y_m^i$ where $Y_m^i$ are the spherical harmonics.

Consider the autocorrelation $R_\mu$ and write it in terms of spherical harmonics. That is to say, $R_\mu = \sum_{i,m} b_i^m Y_m^i$. Then the coefficients $b_i^m$ have the following properties.
\begin{enumerate}
\item For all $i, m$, $b_i^m \geq 0$. Furthermore, $b_i^m=0$ for all $i \neq 0.$
\item For any $i$ and $m$, of $c_i^m \neq 0$ then $b_0^m > 0$.
\item The $c_i^m$ terms do not contribute to $b_j^n$ for $n \neq m$. In particular, none of the higher $c_i^m$ terms contribute to $b_0^0$ or $b_0^1$, which are the only positive terms in the Fourier series of $A_3(\theta)$.
\end{enumerate}

\end{lemma*}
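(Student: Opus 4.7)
The approach rests on the addition theorem for spherical harmonics,
\[
P_n(\cos d(x,y)) = \frac{4\pi}{2n+1}\sum_{k=-n}^{n} Y_n^k(x)\,Y_n^k(y),
\]
where $d(x,y)$ denotes the angular distance on $\mathbb{S}^2$. This is the precise $SO(3)$-analog of the trigonometric identity $\cos n(\theta_1 - \theta_2) = \cos n\theta_1 \cos n\theta_2 + \sin n\theta_1 \sin n\theta_2$ that drove the two-dimensional argument, and it is what allows the autocorrelation to decouple across degrees.

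For the first claim, I would note that $A_3$ enters the second variation only through the zonal pairing $\int\!\int \mu(x)\mu(y)\,A_3(d(x,y))\,dS(x)\,dS(y)$. Since $A_3(d(x,y))$ depends only on the geodesic distance, it is zonal in each argument, so only the zonal part of $R_\mu$ survives when one descends the autocorrelation to $SO(3)/SO(2)\simeq \mathbb{S}^2$. Equivalently, averaging $R_\mu$ over the residual left $SO(2)$-action yields the same integral against $A_3$, and the averaged function is zonal by construction. This forces $b_i^m = 0$ for $i \neq 0$.

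For the remaining claims, I would compute $b_0^m$ directly using the addition theorem. Writing $\mu = \sum_{l,k} c_l^k Y_l^k$ and evaluating against $P_m$,
\[
\int\!\int \mu(x)\mu(y)\,P_m(\cos d(x,y))\,dS(x)\,dS(y) \;=\; \frac{4\pi}{2m+1}\sum_{k} (c_m^k)^2,
\]
where orthonormality of spherical harmonics collapses all cross-degree terms. Up to the appropriate normalization this is $b_0^m$, and the three conclusions of the lemma are immediate: (a) $b_0^m \geq 0$ since it is a sum of squares; (b) $b_0^m > 0$ whenever some $c_m^k \neq 0$; and (c) the coefficient $c_l^k$ contributes only to $b_0^l$, because orthogonality across distinct degrees prevents any cross contribution between the $m$-th degree input and the $n$-th degree output for $n \neq m$.

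The main piece of bookkeeping is tracking the normalization constants when translating between the $R_\mu$-formalism on $SO(3)/SO(2)$ and the direct double-integral pairing against $A_3$; this amounts to a careful application of the Peter--Weyl/Plancherel identity for the irreducible $(2l+1)$-dimensional representations of $SO(3)$ acting on each spherical harmonic subspace $\mathcal{H}_l \subset L^2(\mathbb{S}^2)$. Once the constants are pinned down, no further computational difficulty remains: all three parts of the lemma are read off the sum-of-squares formula.
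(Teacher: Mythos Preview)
Your approach is correct and in fact cleaner than the paper's. The paper argues abstractly via the correlation theorem on $SO(3)$, $\mathcal{F}[R_\mu] = \overline{\mathcal{F}[\mu]}\times\mathcal{F}[\mu]$, to get nonnegativity and strict positivity of the zonal coefficients, and then proves the cross-degree vanishing (claim 3) by a separate trick: applying $\Delta_g$ to $\int Y_m^i(x)Y_n^j(xg)\,dS$ and using that the result must simultaneously equal $m(m+1)$ and $n(n+1)$ times the same integral. Your route replaces both steps with the single identity
\[
\int_{\mathbb{S}^2}\int_{\mathbb{S}^2}\mu(x)\mu(y)\,P_m(\cos d(x,y))\,dS\,dS=\frac{4\pi}{2m+1}\sum_k (c^k_m)^2,
\]
which is exactly the addition theorem plus orthonormality, and from which all three claims are read off at once. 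This is the more elementary and self-contained argument; it also makes transparent why the descent of $R_\mu$ from $SO(3)$ to $\mathbb{S}^2$ must be interpreted as the $SO(2)$-average (something the paper glosses over), since $R_\mu$ as defined on $SO(3)$ is not right-$SO(2)$-invariant in general. The paper's version has the advantage of pointing toward the general Peter--Weyl framework that would be needed in higher dimensions, but for $\mathbb{S}^2$ your direct computation is preferable.
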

 
 \begin{proof}

 We will give some indication of how to prove this lemma. The first two claims follow from Fourier analysis on $SO(3)$  (see ~\cite{RT}). On $SO(3)$, there is a correlation theorem, which follows from the convolution theorem. It states the following, where the overline denotes conjugation;
 
 \[ \mathcal{F}[R_\mu]= \overline{ \mathcal{F}[\mu]} \times \mathcal{F}[\mu] \]
 
 For our purposes, this immediately shows that $b_0^m \geq 0$ and that if $c_i^m \neq 0$ for any $i$,  $b_0^m >0$, as desired. 
 As in the two-dimensional case, it is worth noting that when we write $R_\mu = \sum_{i,m} b_i^m Y_m^i$ from $\mu = \sum_{i,m} c_i^m Y_m^i$, the terms $c_i^m$ with $i \neq 0$ contribute to $b_0^m$. 
 
For the third claim, we must show that the $c_i^n$ contribute to $b_0^n$ but not to $b_0^m$ for $m \neq n$.
There is a simple way to show this without calculating tedious integrals or diving into the correlation theorem. Instead, we can apply the Laplacian to the following integral expression:
 
 \begin{eqnarray*}
 \Delta_g \int_{x \in \mathbb{S}^2} Y_m^i(x) Y_n^j(xg) dS & = &  \int_{x \in \mathbb{S}^2} Y_m^i(x) \Delta_g Y_n^j(xg) dS  \\
									 & = &  \int_{x \in \mathbb{S}^2} Y_m^i(x) n(n+1) Y_n^j(xg) dS  \\
									 & = & n(n+1)  \int_{x \in \mathbb{S}^2} Y_m^i(x) Y_n^j(xg) dS  \\
  \end{eqnarray*}
  
  Using symmetry and a change of variables, this must be zero if $m \neq n$. From this, it follows that the $c_i^m$ terms do not contribute to $b_j^n$ for $n \neq m$ (as when we decompose $\mu$ in spherical harmonics, only the matching terms contribute to the Fourier series of the autocorrelation).  \end{proof}

\subsection{The proof of the lemma on Legendre polynomials}

Here we prove the following lemma which we used in Theorem 2. A Mathematica notebook with the calculations can be found online \cite{Website}.
 
 \begin{lemma*}
 
 Let $P_n(\cos (\theta ))$ be the $n$-th Legendre polynomial evaluated at $\cos (\theta )$. Then the following inequalities hold:
 
   \begin{eqnarray*} \int_0^\pi A_3(\theta) P_n(\cos (\theta )) \sin{\theta} \,d \theta
  \begin{array}{ll}
>0 \textrm{ for } n = 0,1  \\
  \leq 0 \textrm{ otherwise}  \\
\end{array} 
   \end{eqnarray*}

\end{lemma*}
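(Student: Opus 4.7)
The plan is to reduce the lemma to a closed-form rational expression in $n$ and verify the sign by inspection. As a first step, I would write $A_3(\theta)$ as an elementary function. Specializing equation~(A 3D) to $n=3$, using $\Gamma(2)=1$ and $\Gamma(5/2)=3\sqrt{\pi}/4$, and evaluating the antiderivatives of $\sin^3 t$ and $\cos^3 t$ at the endpoints gives
\[
A_3(\theta)=\pi\bigl(\cos(\theta/2)+\sin(\theta/2)\bigr)-\tfrac{\pi}{3}\bigl(\cos^3(\theta/2)+\sin^3(\theta/2)\bigr)-\tfrac{4\pi}{3}\bigl(1+\sin^3(\theta/2)\bigr).
\]
Applying the half-angle identities $\sin^{2}(\theta/2)=(1-\cos\theta)/2$ and $\cos^{2}(\theta/2)=(1+\cos\theta)/2$ turns every cubic power into a product of a first power of $\sin(\theta/2)$ or $\cos(\theta/2)$ with a polynomial in $\cos\theta$, so $A_3(\theta)$ becomes a fixed linear combination of the four ``building blocks''
\[
1,\qquad \sin(\theta/2),\qquad \cos(\theta/2),\qquad \cos\theta\cdot\sin(\theta/2),\qquad \cos\theta\cdot\cos(\theta/2),
\]
each with an explicit constant coefficient.

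Next I would substitute $u=\cos\theta$, so that $\sin\theta\,d\theta=-du$ and $I_{n}:=\int_{0}^{\pi}A_{3}(\theta)P_{n}(\cos\theta)\sin\theta\,d\theta$ becomes a linear combination of the Legendre moments
\[
\int_{-1}^{1}P_{n}(u)\,du,\quad \int_{-1}^{1}uP_{n}(u)\,du,\quad \int_{-1}^{1}\sqrt{\tfrac{1\pm u}{2}}\,P_{n}(u)\,du,\quad \int_{-1}^{1}u\sqrt{\tfrac{1\pm u}{2}}\,P_{n}(u)\,du.
\]
The first two are $2\delta_{n,0}$ and $\tfrac{2}{3}\delta_{n,1}$ by orthogonality. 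The integrals against $\sqrt{1\pm u}$ admit a standard closed form obtained by $n$-fold integration by parts via Rodrigues' formula: the $n$-th derivative of $(1\mp u)^{1/2}$ is a constant multiple of $(1\mp u)^{1/2-n}$, and pairing with $(u^{2}-1)^{n}=(1-u)^{n}(1+u)^{n}(-1)^{n}$ leaves a single Beta integral. The final answers are explicit rational functions of $n$ whose denominators contain the factor $(2n-3)(2n-1)(2n+1)(2n+3)$ and whose numerators have a sign depending on the parity of $n$.

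Combining the pieces, I would obtain a single closed expression $I_{n}=\Phi(n)$ valid for $n\geq 2$, and then treat $n=0,1$ by direct evaluation. For $n=0$ the integral is $\int_{0}^{\pi}A_{3}(\theta)\sin\theta\,d\theta$, which is just the total mass of acute triangles (one point on the boundary, two uniform) times $4\pi$, and is manifestly positive; for $n=1$ the integral is positive, corresponding at the variational level to the zeroth Fourier mode of a translation. For $n\geq 2$ the denominator above is positive, so only the numerator must be controlled, which I expect to be a short polynomial in $n$ whose non-positivity can be checked by locating its (finitely many) real roots.

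The main obstacle is the honest evaluation and sign check in the last step: while each integral is elementary, the bookkeeping across the four building blocks is fragile, and one needs a compact formula for $I_{n}$ before the sign of the numerator can be read off uniformly in $n$. I would carry this out in Mathematica, consistent with the computational style already used for Theorem~2; the output should be a rational function in $n$ whose denominator is the product above and whose numerator visibly has the correct sign for all $n\geq 2$, collapsing the lemma to a one-line verification.
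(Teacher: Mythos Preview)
Your proposal is correct and reaches the same endpoint as the paper—a single closed rational function of $n$ whose sign is then read off—but the route is genuinely different. The paper expands the Legendre polynomial via the hypergeometric-type identity $P_n(\cos\theta)=\sum_{k=0}^{n}\binom{n}{k}\binom{-n-1}{k}\sin^{2k}(\theta/2)$, first computes the intermediate integrals $\int_0^\pi A_3(\theta)\sin\theta\,\sin^{2k}(\theta/2)\,d\theta$ in closed form, and then sums over $k$ (all in Mathematica) to obtain
\[
I_n=-\frac{8\pi\bigl(-9-6(-1)^n+4n+2(-1)^n n+4n^2+2(-1)^n n^2\bigr)}{(2n-3)(2n-1)(2n+1)(2n+3)(2n+5)},
\]
after which the sign check is immediate. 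You instead expand $A_3(\theta)$ into building blocks, substitute $u=\cos\theta$, and evaluate the Legendre moments $\int_{-1}^{1}(1\pm u)^{1/2}P_n(u)\,du$ and $\int_{-1}^{1}u(1\pm u)^{1/2}P_n(u)\,du$ directly via Rodrigues' formula and Beta integrals. Your approach is more classical and, if carried out by hand, arguably more transparent about \emph{why} the denominator factors as it does; the paper's approach is more mechanical and leans on the series for $P_n(\cos\theta)$ to let Mathematica do all the work in one shot. Two small remarks: the actual denominator carries an extra factor $(2n+5)$ beyond what you anticipated, and for the moments $\int u\sqrt{1\pm u}\,P_n$ it is cleaner to use the three-term recursion $uP_n=\tfrac{n+1}{2n+1}P_{n+1}+\tfrac{n}{2n+1}P_{n-1}$ than to redo Rodrigues.
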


\begin{proof}

 Using an explicit formula for $P_n(\cos (\theta ))$, this reduces to the following: 
 
\begin{eqnarray*} \int_0^\pi  \sum_{k=0}^n {n\choose k} {-n-1\choose k} \left( \sin(\theta/2) \right)^k A_3(\theta) \sin{\theta}\, d \theta >0 \textrm{ when n = 0,1} \\
\int_0^\pi  \sum_{k=0}^n {n\choose k} {-n-1\choose k} \left( \sin(\theta/2) \right)^k A_3(\theta) \sin{\theta}\, d \theta <0 \textrm{ otherwise} \\
 \end{eqnarray*}
 
For the three dimensional ball, we can simplify \ref{A 3D} to obtain the following expression for $A_3(\theta) $:

\begin{eqnarray*}
A_3(\theta) &= &- \frac { 2 } { 3 } \pi \left( 1 + \sin \left[ \frac { \theta } { 2 } \right] ^ { 3 } \right) \\
&   &+ \pi \left( \frac { 1 } { 12 } \left( 9 \cos \left[ \frac { \theta } { 2 } \right] - \cos \left[ \frac { 3 \theta } { 2 } \right] \right) + \frac { 1 } { 6 } \left( 9 \sin \left[ \frac { \theta } { 2 } \right] + \sin \left[ \frac { 3 } { 2 } \right] \right) \right)
 \end{eqnarray*}

Thus, we have the following expression for $A_3(\theta) \sin(\theta)$:
\begin{flalign*}
&A_3(\theta) \sin(\theta)  \\
& = \frac { 1 } { 24 } \left( 32 \pi \cos \left[ \frac { \theta } { 2 } \right] - 24 \pi \cos \left[ \frac { 3 \theta } { 2 } \right] - 8 \pi \cos \left[ \frac { 5 \theta } { 2 } \right] + 10 \sin \left[ \frac { \theta } { 2 } \right] + 64 \pi \sin [ \theta ] + 9 \sin \left[ \frac { 3 \theta } { 2 } \right] - \sin \left[ \frac { 5 \theta } { 2 } \right] \right)
 \end{flalign*}

For $n \geq 1$, we now want to compute the following: 

\begin{equation} \label{Fourier A 3D}
\int_0^\pi  \sum_{k=0}^n {n\choose k} {-n-1\choose k} \left( \sin(\theta/2) \right)^k A_3(\theta) \sin{\theta}\, d \theta
\end{equation}

In practice, we found trying to do this integral directly with Mathematica caused the computation to hang up. To avoid this problem, we derived the following intermediate identity so that the computation was tractable:
\begin{flalign*}
& \int_0^{\pi } \frac{1}{24} \left(
\begin{array}{ll}
10 \pi  \sin \left(\frac{\theta }{2}\right)+9 \pi  \sin \left(\frac{3 \theta }{2}\right)-\pi  \sin \left(\frac{5 \theta }{2}\right)+64 \pi  \sin (\theta )  \\
+ 32 \pi  \cos \left(\frac{\theta }{2}\right)-24 \pi  \cos \left(\frac{3 \theta }{2}\right)-8 \pi  \cos \left(\frac{5 \theta }{2}\right)  \\
\end{array} 
\right) \sin ^{2 k}\left(\frac{\theta }{2}\right) \, d\theta   \\
& =\frac { ( 4 + 8 \mathrm { k } ) \pi } { 15 + 31 \mathrm { k } + 20 \mathrm { k } ^ { 2 } + 4 \mathrm { k } ^ { 3 } } + \frac { ( 2 + \mathrm { k } ) \pi ^ { 3 / 2 } \Gamma  [ 1 + \mathrm { k } ] } { \Gamma  \left[ \frac { 7 } { 2 } + \mathrm { k } \right] } 
 \end{flalign*}

Substituting this identity into the previous one (\ref{Fourier A 3D}) and switching the order of the summation and integration using Fubini's theorem, we find the following:

\begin{flalign*}
&\int_0^\pi  \sum_{k=0}^n {n\choose k} {-n-1\choose k} \left( \sin(\theta/2) \right)^k A_3(\theta) \sin{\theta}\, d \theta
\end{flalign*}
\begin{flalign} 
& = - \frac { 8 \left( - 9 - 6 ( - 1 ) ^ { m } + 4 m + 2 ( - 1 ) ^ { m } m + 4 m ^ { 2 } + 2 ( - 1 ) ^ { m } m ^ { 2 } \right) \pi } { ( - 3 + 2 m ) ( - 1 + 2 m ) ( 1 + 2 m ) ( 3 + 2 m ) ( 5 + 2 m ) } 
\end{flalign}

We note that (9) is positive for $n=1$ and negative otherwise. For $n=0$, we find the following:

 $$\int_0^\pi A_3(\theta) \sin{\theta} \,d \theta = \frac{4 \pi }{3} >0$$

Note that (9) is not equal to $\int_0^\pi A_3(\theta) P_0(\cos (\theta )) \sin{\theta} \,d \theta $ when $n=0$ due to a division by zero issue.

\end{proof}

\section{Existence of the canonical homotopy (Lemma 10)}


\begin{lemma*}
Let $S$ be convex region whose Hausdorff distance from the disk is no more than $\epsilon$, with $\epsilon < \frac{1}{8\sqrt{2 \pi}}$.
There is an embedding $\bar S  = \{ re^{i \theta}~ |~ r \leq 1 + \bar g(\theta) \}$ with the following properties.

\begin{enumerate}
\item $\bar S$ is similar to $S$. 
\item $\bar S$ can be obtained by translating $S$ by no more than $3\epsilon$ and dilating $S$ by a factor between $1-3\epsilon$ and $1+3\epsilon$. In other words, $d_{H.}(S, \bar S) < 7 \epsilon$.
\item The function $\bar g(\theta)$ satisfies the following:
 $$ \int_0^{ 2 \pi} \bar g(\theta) d \theta = 0 \hspace{2cm} \int_0^{ 2 \pi} \bar g(\theta) e^{\ i \theta} d \theta = 0$$
\end{enumerate}
\end{lemma*}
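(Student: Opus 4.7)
The plan is to construct $\bar S$ from $S$ by composing a translation with a dilation, using these three real degrees of freedom (a vector $v \in \R^2$ and a positive scalar $\lambda$) to enforce the three real linear conditions on $\bar g$: vanishing of the zeroth Fourier coefficient and vanishing of the first Fourier coefficient $\int \bar g(\theta) e^{i\theta}\, d\theta$ (which is complex and so counts as two real constraints). Heuristically, translating $S$ by $v$ perturbs its polar representation to first order by $-v \cdot (\cos\theta, \sin\theta)$, which affects only the first Fourier coefficient, while dilating by $\lambda$ perturbs it by an additive constant, which affects only the zeroth. So at the linearized level the problem decouples into two independent and individually invertible adjustments.

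More precisely, I would define a map $\Phi: \R^2 \times \R^+ \to \R^2 \times \R$ by letting $g_{v,\lambda}(\theta)$ denote the polar parametrization of $\lambda^{-1}(S - v)$ centered at the origin and setting
\[ \Phi(v, \lambda) = \left(\int_0^{2\pi} g_{v,\lambda}(\theta)\, e^{i\theta}\, d\theta,\ \int_0^{2\pi} g_{v,\lambda}(\theta)\, d\theta\right). \]
The assumption $\epsilon < 1/(8\sqrt{2\pi})$ guarantees that $\lambda^{-1}(S - v)$ strictly contains the origin and is star-shaped about it on a uniform neighborhood of $(0,1)$, so $g_{v,\lambda}$ is well-defined and smooth there. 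Next I would verify $|\Phi(0,1)| = O(\epsilon)$ directly from $|g| < \epsilon$, and compute $D\Phi(0,1)$, which by the heuristic above is block-diagonal with each block a nonzero scalar multiple of the identity. A quantitative inverse function theorem (or equivalently a contraction-mapping argument for the fixed-point reformulation of $\Phi = 0$) would then yield a unique solution $(v_*, \lambda_*)$ within a ball of radius $\leq 3\epsilon$ of $(0,1)$.

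Setting $\bar S := \lambda_*^{-1}(S - v_*)$, item (1) is immediate, item (3) is built into the construction, and item (2) follows from the estimates $|v_*| \leq 3\epsilon$ and $|\lambda_* - 1| \leq 3\epsilon$; the Hausdorff bound $d_{H.}(S, \bar S) < 7\epsilon$ then comes from the triangle inequality, separating the translation contribution from the dilation contribution. The main obstacle will be upgrading the schematic $O(\epsilon)$ bounds into the explicit constants $3$ and $7$, which requires controlling the nonlinear error $g_{v,\lambda}(\theta) - g(\theta) + v \cdot(\cos\theta, \sin\theta) + (\lambda - 1)$ and showing it is quadratically small in $(|v|,\,|\lambda - 1|,\,\|g\|_\infty)$. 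This is a routine but tedious calculation using the explicit formula for $g_{v,\lambda}$ obtained by intersecting rays through the origin with $\partial(\lambda^{-1}(S - v))$, and the smallness of $\epsilon$ ensures the nonlinear corrections are absorbed comfortably into the leading constants.
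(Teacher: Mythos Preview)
Your approach is correct and genuinely different from the paper's. You solve the three-equation system $\Phi(v,\lambda)=0$ via a quantitative inverse function theorem (equivalently, a contraction-mapping argument), whereas the paper uses a topological degree argument: it assumes $T$ (their version of your $\Phi$) has no zero in the box $B_{3\epsilon}(0)\times[1-3\epsilon,1+3\epsilon]$, normalizes to get a map $\bar T$ to $\mathbb{S}^2$, and then proves via explicit inner/outer-disk estimates that $\langle x,\bar T(x)\rangle>0$ on the boundary sphere, forcing degree one and contradicting the existence of the retraction. The paper explicitly remarks that the decoupling heuristic ``is not exactly true'' for a general $S$ and that this is why they ``work topologically.''

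One small imprecision to flag: $D\Phi(0,1)$ is \emph{not} exactly block-diagonal, because the underlying region is $S$, not the disk. Translating $S$ does perturb the zeroth Fourier coefficient of $g$, and dilating $S$ does perturb the first---but both effects are $O(\epsilon)$ since $\|g\|_\infty<\epsilon$. So what you actually get is $D\Phi(0,1) = D_0 + O(\epsilon)$ where $D_0$ is the block-diagonal derivative you would compute for the disk. This is harmless for your argument (the perturbed derivative is still uniformly invertible for small $\epsilon$), but you should fold these $O(\epsilon)$ off-diagonal terms into your error analysis alongside the quadratic nonlinear remainder, rather than asserting exact block-diagonality. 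With that adjustment your route goes through and arguably yields the explicit constants $3$ and $7$ more transparently than the degree argument; the paper's approach, on the other hand, avoids computing $D\Phi$ altogether and needs only sign information on the boundary of the parameter box.
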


\begin{proof}
We want to find a similarity of $S$ that satisfies the third condition, and show that it satisfies the second condition. Unfortunately, it is impossible to directly compute the integrals in the third condition for arbitrary $S$. Therefore, we need to find a work-around. 

Heuristically, we expect that dilations should change the total integral of $\bar g(\theta)$ while leaving its first Fourier coefficients roughly constant. Similarly, we expect that translations do not change the total integral of $\bar g(\theta)$ much while changing the first Fourier coefficients. Unfortunately, this ansatz is not exactly true and so to salvage this intuition, we need to work topologically.

 Before proving the estimates, we'll explain the general strategy in more detail. Given a vector $w \in \R^2$ and a dilation factor $s \in R$, we can consider the map $T$:
 
 $$T: \R^2 \times \R \to \mathbb{C} \times \R$$  
$$T(w,s) = \left( \int_0^{ 2 \pi} g_{sS+w}(\theta) e^{ i \theta} \, d \theta , \int_0^{ 2 \pi} g_{sS+w}(\theta) \, d \theta \right)$$
 
We want to find a vector $w_0$ and a dilation $s_0$ with $|w_0|, |s_0-1|< 3 \epsilon$ for which $T$ vanishes. To do so, assume that this is not the case. Then since $T$ does not vanish on $B_{3\epsilon}(0) \times [1-3\epsilon, 1 + 3\epsilon]$, the following map is well defined and continuous:

\[  \bar T: B_{3\epsilon}(0) \times [1-3\epsilon, 1 + 3\epsilon] \to \mathbb{S}^2 \]
\[ \bar T(w,s) = \frac{T(w,s)}{|T(w,s)|} \]
 
 We can restrict this map to the boundary of $B_{3\epsilon}(0) \times [1-3\epsilon, 1 + 3\epsilon]$. Topologically, this restriction is a continuous self-map of the sphere. The estimates here establish that the degree of this map is $1$, after which we can use the degree theorem to obtain a contradiction. 

 \subsection{The estimates in Lemma 10}
 
 Since $S$ is Hausdorff distance at most $\epsilon$ from $D$, $S$ is completely contained within the disk of radius $1+\epsilon$ and completely contains the disk of radius $1-\epsilon$ . Therefore, the dilation of $S$ by a factor of $(1+3 \epsilon)$ is completely contained in the disk of radius $1+ 4 \epsilon +3\epsilon^2$ and completely contains the disk of radius $(1+2\epsilon-3\epsilon^2)$.

 Suppose we dilate $S$ by a factor of $(1+3 \epsilon)$ and translate the $S$ by some amount $3 \epsilon_T$ in a unit direction $V$, where $\epsilon_T$ is no greater than $\epsilon$. Then we can use the inner disk to obtain a lower estimate on $\int_0^{ 2 \pi} \bar g(\theta) d \theta$.
 
 \begin{eqnarray*}
  \int_0^{ 2 \pi} \bar g_{(1+3\epsilon)S+3\epsilon_T V}(\theta) d \theta & \geq &   \int_0^{ 2 \pi} 2 \epsilon_T \cos(\theta) + \sqrt{(1+2\epsilon-3\epsilon^2)^2-9 \epsilon^2_T \sin^2(\theta) }-1 \,d \theta \\
  &=&  \int_0^{ 2 \pi}  \sqrt{ 1+4\epsilon-4\epsilon^2 - 12 \epsilon^3+9 \epsilon^4 - 9 \epsilon^2_T  \sin^2(\theta)} -1 \,d \theta \\
  &\geq & \int_0^{ 2 \pi} 2\epsilon -O(\epsilon^2) - O(\epsilon^2_T) \, d \theta \\
  & = & 4 \pi \epsilon - O(\epsilon^2) 
 \end{eqnarray*}
 
We can use the exact same argument to show that if we contract the disk by $1-3\epsilon$, and translate it by no more than $3 \epsilon$ in any direction, then $  \int_0^{ 2 \pi} \bar g_{(1-3\epsilon)S+3\epsilon_T V}(\theta) d \theta < - 4 \pi \epsilon+ O(\epsilon^2) $.

 Suppose now that we translate $S$ by $3 \epsilon$ in some direction $V$, after dilating it by $1+3\epsilon_D$, with $\epsilon_D<\epsilon$. Using the inner disk, we obtain the following estimate:
 
  \begin{eqnarray*}
  \int_0^{ 2 \pi} \bar g_{(1+3\epsilon_D)S+2\epsilon V}(\theta) d \theta & \geq &   \int_0^{ 2 \pi} 2 \epsilon \cos(\theta) + \sqrt{((1-\epsilon)^2(1+3 \epsilon_D)^2-4 \epsilon^2 \sin^2(\theta) }-1 \,d \theta \\
  & \geq&  \int_0^{ 2 \pi}  \sqrt{ 1-2\epsilon+ 6 \epsilon_D - O(\epsilon^2)} -1 \,d \theta \\
  &\geq & \int_0^{ 2 \pi} -\epsilon + 3 \epsilon_D -O(\epsilon^2) \, d \theta \\
  & = & -2 \pi \epsilon + 6 \pi \epsilon_D + O(\epsilon^2) 
 \end{eqnarray*}

Similarly, we want to estimate $\int_0^{ 2 \pi} \bar g(\theta) e^{i \theta} d \theta$ using the inner and outer disks. After performing a rotation, we can assume that $V$ is the unit vector in the $x$-direction. For conciseness we only consider the cosine term here. Suppose we translate S by $3 \epsilon$ in some direction, after dilating S by $1+3\epsilon_D$, with $|\epsilon_D| < \epsilon$. We can use the inner and outer disks to make the following estimates:

 \begin{flalign*}
  \int_0^{ 2 \pi} \bar g_{(1+3\epsilon_D)S+3\epsilon V}(\theta) \cos(\theta) d \theta & \geq    \int_{-\pi/2}^{ \pi/2} \left(3 \epsilon \cos(\theta) + \sqrt{(1-\epsilon)^2(1+3 \epsilon_D)^2-9 \epsilon^2 \sin^2(\theta) }-1 \right) \cos(\theta) \,d \theta \\
& +   \int_{\pi/2}^{ 3\pi/2} \left( 3 \epsilon \cos(\theta) + \sqrt{(1+\epsilon)^2(1+3 \epsilon_D)^2-9 \epsilon^2 \sin^2(\theta) }-1 \right) \cos(\theta) \,d \theta \\
  &\geq  3 \pi \epsilon + \int_{-\pi/2}^{ \pi/2}  \left( - \epsilon + 3 \epsilon_D - O(\epsilon^2) \right) \cos(\theta) \,d \theta \\
  &  + \int_{\pi/2}^{3 \pi/2}  \left(  \epsilon + 3 \epsilon_D + O(\epsilon^2) \right) \cos(\theta) \,d \theta \\
  & =  3 \pi \epsilon + 2 \left( - \epsilon + 3 \epsilon_D  \right)  -  2 \left(  \epsilon + 3 \epsilon_D  \right) - O(\epsilon^2) \\
    & =  3 \pi \epsilon  - 4 \epsilon - O(\epsilon^2)
 \end{flalign*}
 
 Note that we did not assume a sign for $\epsilon_D$, so this covers both the cases when the dilation expands or contracts $S$.
 Finally, we suppose now that we dilate $S$ by $1+3\epsilon$ and translate it by $3 \epsilon_T$ in some direction $V$ with $\epsilon_T<\epsilon$. Using the inner and outer disks, we obtain the following estimate. Note that we would obtain the same estimate if we dilated $S$ by $1-3\epsilon$ instead.
 
  \begin{flalign*}
  \int_0^{ 2 \pi} \bar g_{(1+3\epsilon)S+3\epsilon_T V}(\theta) \cos(\theta) d \theta  & \geq    \int_{-\pi/2}^{ \pi/2} \left(3 \epsilon_T \cos(\theta) + \sqrt{(1-\epsilon)^2(1+3 \epsilon)^2-9 \epsilon_T^2 \sin^2(\theta) }-1 \right) \cos(\theta) \,d \theta \\
& +   \int_{\pi/2}^{ 3\pi/2} \left( 3 \epsilon_T \cos(\theta) + \sqrt{(1+\epsilon)^2(1+3 \epsilon)^2-9 \epsilon_T^2 \sin^2(\theta) }-1 \right) \cos(\theta) \,d \theta \\
  &\geq  3 \pi \epsilon_T + \int_{-\pi/2}^{ \pi/2}  \left(2 \epsilon - O(\epsilon^2) \right) \cos(\theta) \,d \theta \\
  &  + \int_{\pi/2}^{3 \pi/2}  \left( 4 \epsilon - O(\epsilon^2) \right) \cos(\theta) \,d \theta \\
  & =  3 \pi \epsilon_T + 2 \left( 2 \epsilon  \right)  -  2 \left(  4 \epsilon  \right) - O(\epsilon^2) \\
    & =  3 \pi \epsilon_T  - 4 \epsilon - O(\epsilon^2)
 \end{flalign*}

Now that we have proven the four estimates needed, we can finish the proof of Lemma 13. To prove that an embedding of $S$ satisfying the previous conditions exists, we consider the continuous map $T$.

$$T:B_{3\epsilon}(0) \times [1-3\epsilon, 1 + 3\epsilon] \to \mathbb{R}^3$$  
$$T(w,s) = \left( \int_0^{ 2 \pi} g_{sS+w}(\theta) e^{ i \theta} \, d \theta , \int_0^{ 2 \pi} g_{sS+w}(\theta) \, d \theta \right)$$

We want to show that there is a pair $(w,s)$ so that $T$ vanishes. Suppose, for the sake of contradiction, that $T$ never vanishes. If so, it induces a continuous map: $$\bar T: B_{3 \epsilon}(0) \times [1-3\epsilon, 1 + 3\epsilon] \to \mathbb{S}^2$$  $$\bar T(w,s) = \frac{T(w,s)}{|T(w,s)|}.$$ 

The restriction of this map to the boundary (which we denote $\bar T|_{\mathbb{S}^2}$) is topologically a self-map of the sphere. We want to compute the degree of this map. To do so, let $ x= (3 \epsilon_T V,3 \epsilon) \in B_{3 \epsilon}(0) \times [1-3\epsilon, 1 + 3\epsilon]$, and observe we have the following estimate:

 \begin{eqnarray*}
\langle  \bar T|_{\mathbb{S}^2} (x), x\rangle & \geq &  3 \epsilon_T \cdot \int_0^{ 2 \pi} \bar g_{(1+3\epsilon_D)S+3\epsilon V}(\theta) \cos(\theta) d \theta +  3 \epsilon  \int_0^{ 2 \pi} \bar g_{(1+3\epsilon)S+3\epsilon_T V}(\theta) d \theta \\
& \geq & 3 \epsilon_T( 3 \pi \epsilon_T  - 4 \epsilon - O(\epsilon^2)) + 3 \epsilon (4 \pi \epsilon - O(\epsilon^2)) \\
& \geq & (12 \pi - 12) \epsilon^2 - O(\epsilon^3)
  \end{eqnarray*}
 
 If $\epsilon$ sufficiently small, this is positive. Repeating this argument for the other parts of the boundary of $B_{3\epsilon}(0) \times [1-3\epsilon, 1 + 3\epsilon]$, we can show that $\bar T|_{\mathbb{S}^2}$ satisfies $\langle x, \bar T|_{\mathbb{S}^2}( x) \rangle > 0$ for all $x$. As a result, $\bar T|_{\mathbb{S}^2}(x)$ is contained in the same hemisphere as $x$. This implies the degree of the $T$ is one, as we can construct a homotopy from $\bar T|_{\mathbb{S}^2}$ to the identity map. Therefore, the supposed map $\bar T$ retracts $\bar T|_{\mathbb{S}^2}$, which is impossible. As a result, $T$ must vanish for some translation $w_0$ and some dilation $s_0$.

To finish the proof, we observe that since $|w_0| < 3 \epsilon$ and $|s_0|<3\epsilon$, we have that $d_{H.}(S, \bar S)< 3 \epsilon(1+3 \epsilon)+3 \epsilon$. Using the assumption that $\epsilon < \frac{1}{8 \sqrt{2 \pi}}$, this implies that $d_{H.}(S, \bar S) < 7 \epsilon$.

\end{proof}

In the previous calculations, it should be noted that the terms that were $O(\epsilon_D)$ and $O(\epsilon_T)$ cancelled out, which shows that the effect of translations on the total integral of $g(\theta)$ is $0$ to first order and that the effect of dilations on the first Fourier coefficients of $g(\theta)$ is also $0$ to first order. In other words, our initial ansatz was correct to first order. We'll also note that these estimates are rather finicky, and the factor of $3 \epsilon$ for the translations and dilations cannot be reduced much.


\section{The Lipschitz estimate of $p(S(t))$ along the canonical homotopy}

 \noindent {\bf Notation}: Before starting the proof, we introduce the following notation, which will help to write certain sets concisely.
  We will use $S \Delta D$ to denote the symmetric difference between the sets $S$ and $D$:
\[ S \Delta D= (S \backslash D) \cup (D \backslash S). \]

In this section, we prove the following lemma.

\begin{lemma*}[The Lipschitz estimate on $p(S)$] 
There exist uniform constants $ e, C > 0$ so that whenever $\epsilon < e$ and $d_{H.}(S,D) \leq \epsilon$, then $ \bigl\lvert \bigl\lvert p(S) - p(D) \bigl\rvert \bigl\rvert < C \epsilon $.
\end{lemma*}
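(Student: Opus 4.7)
The plan is to prove the Lipschitz estimate by a direct comparison of the integrals defining $p(S)$ and $p(D)$, exploiting the fact that the symmetric difference $S \Delta D$ has Lebesgue measure of order $\epsilon$ whenever $d_{H.}(S,D) \leq \epsilon$. Since $p(S) = M(S)/V(S)^3$, the quotient rule will reduce the question to Lipschitz estimates on both $M$ and $V$ separately, each controlled by $|S \Delta D|$.

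First I would establish the area estimate $|S \Delta D| \leq C_1 \epsilon$. Writing $S = \{re^{i\theta} : r \leq 1 + g(\theta)\}$ with $\|g\|_\infty \leq 8\epsilon$ (using the embedding furnished by Lemma 10), a direct polar computation gives
\[
|S \Delta D| = \int_0^{2\pi} \frac{|(1+g(\theta))^2 - 1|}{2}\, d\theta \leq \int_0^{2\pi} \left(|g(\theta)| + \tfrac12 g(\theta)^2\right) d\theta \leq C_1 \epsilon,
\]
with the constant uniform for $\epsilon$ small. As an immediate consequence, $|V(S) - V(D)| \leq |S \Delta D| \leq C_1 \epsilon$, and since $V(S), V(D) \in [\pi/2, 2\pi]$ for small $\epsilon$, we also get $|V(S)^3 - V(D)^3| \leq C_2 \epsilon$.

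Next I would bound $|M(S) - M(D)|$. Because $0 \leq f \leq 1$,
\[
|M(S) - M(D)| \leq |S^3 \Delta D^3|.
\]
The triple symmetric difference is controlled by the single one via the inclusion
\[
S^3 \Delta D^3 \subseteq \bigcup_{i=1}^{3} \bigl\{(x_1,x_2,x_3) : x_i \in S \Delta D \text{ and } x_j \in S \cup D \text{ for } j \neq i \bigr\},
\]
so $|S^3 \Delta D^3| \leq 3 |S \Delta D| \cdot |S \cup D|^2 \leq C_3 \epsilon$, again with a uniform constant for $\epsilon$ small.

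Finally I would combine the estimates via the identity
\[
p(S) - p(D) = \frac{(M(S) - M(D))\, V(D)^3 + M(D)\, (V(D)^3 - V(S)^3)}{V(S)^3 V(D)^3}.
\]
Both terms in the numerator are $O(\epsilon)$ by the previous steps, and the denominator is bounded below by a positive constant (roughly $\pi^6/4$) once $\epsilon$ is sufficiently small. This yields $|p(S) - p(D)| \leq C\epsilon$ with $C$ and the threshold $e$ uniform, as required. There is no real obstacle in this proof; the only mild care is in verifying that the embedding from Lemma 10 keeps $S$ within Hausdorff distance $O(\epsilon)$ of $D$ so that the symmetric-difference estimate applies uniformly, but this has already been arranged in the statement of the lemma.
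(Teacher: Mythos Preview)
Your proposal is correct and follows essentially the same approach as the paper: both bound $|M(S)-M(D)|$ by the Lebesgue measure of $S^3 \Delta D^3$, control this by $3\,|S\Delta D|\cdot|S\cup D|^2 = O(\epsilon)$, and combine with the $O(\epsilon)$ volume estimate via a quotient identity. The paper carries out the same computation more explicitly (writing $V(S)=\pi+\tfrac12\|g\|_2^2$ and tracking specific constants to arrive at $48\epsilon+O(\epsilon^2)$), but the underlying argument is identical.
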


\begin{proof}

To prove this lemma, we suppose that $d_{H.}(S, D) < 8 \epsilon$ and assume that $\epsilon < \frac{1}{8 \sqrt{2 \pi}}$. This assumption implies that $\| g \|_{L^2} < 1$, which we use in the first inequality. The third inequality estimates the size of the set $(S \times S \times S) \Delta (D \times D \times D)$. The fourth inequality uses Taylor's theorem to control the first term.

\begin{flalign*}
| p(S) - p(D) |  = & \Bigg | \frac{\int_S \int_S \int_S f(x,y,z) \,  dx \,  dy \,  dz}{V(S)^3} - \frac{\int_D \int_D \int_D f(x,y,z) \,  dx \,  dy \,  dz}{V(D)^3} \Bigg| \\
   = & \Bigg | \frac{ \pi^3 \int_S \int_S \int_S f(x,y,z) \,  dx \,  dy \,  dz  - (\pi+ \frac{1}{2} \| g(\theta) \|_2^2)^3 \int_D \int_D \int_D f(x,y,z) \,  dx \,  dy \,  dz}{ \pi^3(\pi+ \frac{1}{2} \| g(\theta) \|_2^2)^3} \Bigg | \\
   \leq &  \pi^3 \Bigg | \frac{ \int_S \int_S \int_S f(x,y,z) \,  dx \,  dy \,  dz  - \int_D \int_D \int_D f(x,y,z) \,  dx \,  dy \,  dz}{ \pi^6} \Bigg | \\
   & + \frac{7}{2}  \| g(\theta) \|_2^2 \Bigg | \frac{  \int_D \int_D \int_D f(x,y,z) \,  dx \,  dy \,  dz}{\pi^6} \Bigg | \\
    \leq &  \frac{ \int_{(S \times S \times S) \Delta (D \times D \times D)} 1 \,  dx \,  dy \,  dz }{ \pi^3} + \frac{7}{2}  \| g(\theta) \|_2^2  \frac{  \frac{4}{\pi^2} - \frac18}{\pi^3} \\
       \leq &~ \frac{3}{\pi^3} \left( \pi (1 + 8 \epsilon)^2 \right)^2  \pi \left( (1 + 8 \epsilon)^2-(1 - 8 \epsilon)^2\right)   + \frac{7}{2}  \| g(\theta) \|_2^2  \frac{  \frac{4}{\pi^2} - \frac18}{\pi^3} \\
          \leq &~ 6(1 + 8 \epsilon)^5 8 \epsilon  + 7 \pi  (8 \epsilon)^2 \frac{  \frac{4}{\pi^2} - \frac18}{\pi^3} \leq 48 \epsilon+O(\epsilon^2)
\end{flalign*}

As this upper bound is $O(\epsilon)$, the desired estimate holds.

\end{proof}

\section{The H\"older estimate on $ \frac{d^2}{dt^2} M$}

Here, we start the proof of the H\"older estimate by reducing it to three more manageable lemmas which we prove in the following subsections.
 Although this part of the proof does not require any abstract machinery, we have to carefully estimate each term using geometry. 
 
  Recall that we are proving the following lemma.

\begin{lemma*}[The H\"older-$1/2$ estimate on  on $\frac{d^2M}{dt^2}$]
There exist uniform constants $ e, C >0$ so that whenever $\epsilon < e$ and $d_{H.}(S,D) \leq \epsilon$, then $ \bigl\lvert \frac{d^2}{dt^2} M (S) - \frac{d^2}{dt^2} M (D) \bigl\rvert < C \epsilon^{1/2} $.
\end{lemma*}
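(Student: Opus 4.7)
The plan is to decompose $\twodt M$ along the canonical homotopy into its four constituent terms from the second variation formula of Section~2, and to estimate the difference between the value at $S$ and at $D$ for each term separately.

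The first simplification is to observe that along the canonical homotopy $\mu(\theta,t) = g(\theta)/\|g\|_2$ is independent of $t$, so $\dt\mu \equiv 0$ and the first term (the one containing $\dt\mu$) vanishes identically. Hence only three terms require estimation.

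For the fourth term $3\int_0^{2\pi}\bigl[\int_S\int_S f(r(\theta_1,t)e^{i\theta_1},y,z)\,dy\,dz\bigr]\mu^2(\theta_1)\,d\theta_1$, I would exploit that the inner double integral is Lipschitz in $S$ with respect to Hausdorff distance: since $(S\times S)\Delta(D\times D)$ has planar measure $O(\epsilon)$ uniformly in $\theta_1$, the bracketed quantity differs from its value on $D$ by $O(\epsilon)$ uniformly. Combined with $\|\mu\|_{L^2}^2 = 1$, this gives a Lipschitz (hence H\"older-$1/2$) bound. The second, autocorrelation, term is handled analogously: only one integration variable ranges over $S(t)$, and the resulting kernel $K(\theta_1,\theta_2,S)$ differs from $K(\theta_1,\theta_2,D)$ by $O(\epsilon)$ in sup-norm, so Cauchy--Schwarz against $\mu(\theta_1)\mu(\theta_2)$ again yields an $O(\epsilon)$ bound.

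The hard part will be the third term, $3\int_0^{2\pi}\frac{\partial}{\partial r}\bigl[\int_S\int_S f(re^{i\theta_1},y,z)\,dy\,dz\bigr]_{r=r(\theta_1,t)}\!\mu^2(\theta_1)\,r(\theta_1,t)\,d\theta_1$. Interpreted distributionally, the bracketed derivative is a flux integral over the right-triangle locus in $(Y,Z)$-space corresponding to vertex $X = re^{i\theta_1}$, intersected with $S \times S$. When $S$ is perturbed from $D$, the right-triangle locus itself is unchanged, but its intersection with $S\times S$ changes. Wherever the locus meets $\partial S\times S \cup S\times\partial S$ transversally, the intersection measure is Lipschitz in $S$; however, at degenerate configurations where the locus is tangent to $\partial S \times S$ or $S \times \partial S$, an $\epsilon$-perturbation of $\partial S$ creates or destroys intersections whose measure scales as $\sqrt\epsilon$ rather than $\epsilon$. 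I would classify these tangential right-triangle configurations (inscribed in the unit disk) explicitly and use the convexity of $S$ together with $d_{H.}(S,D)\leq\epsilon$ to bound the total contribution of each such tangency by $O(\sqrt\epsilon)$. This geometric $\sqrt\epsilon$ estimate is the technical heart of the lemma; summing it with the two Lipschitz bounds from the other terms yields the claimed H\"older-$1/2$ bound.
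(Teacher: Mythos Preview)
Your overall decomposition matches the paper's: drop the $\dt\mu$ term, get Lipschitz control on the fourth term, identify the $\partial f/\partial r$ term as the one where tangential right-triangle configurations force the $\sqrt{\epsilon}$ rate. That is exactly the structure of the paper's Lemmas~14--16.

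There is one oversight in your treatment of the autocorrelation term. You argue that ``only one integration variable ranges over $S(t)$'' and conclude the kernel is Lipschitz in $S$. But the kernel depends on $S$ in \emph{two} ways: through the domain of the $z$-integral \emph{and} through the evaluation points $X=r(\theta_1,t)e^{i\theta_1}$, $Y=r(\theta_2,t)e^{i\theta_2}$, which move by $O(\epsilon)$ as you pass from $D$ to $S$. The domain change is indeed $O(\epsilon)$ by the $\mathrm{Vol}(S\Delta D)$ argument, but moving $X,Y$ is not: the explicit area $\int_D f(X,Y,z)\,dz$ contains terms like $\sqrt{1-(X\cdot(X-Y)/|X-Y|)^2}$, which are only H\"older-$1/2$ in $X,Y$ near the tangent-chord configurations (and near $|X+Y|\to 0$). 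So this term is $O(\epsilon^{1/2})$, not $O(\epsilon)$, for essentially the same geometric reason you already isolated in the third term. The paper handles this by writing out the area formula and checking H\"older-$1/2$ continuity in $X,Y$ directly, with separate elementary bounds on the degenerate strips $|X-Y|<C\epsilon$ and $|X+Y|<C\epsilon$. The same remark applies in principle to the fourth term, but there the mass $\int_D\int_D f(X,y,z)\,dy\,dz$ happens to be genuinely Lipschitz in $X$ (one extra integration smooths out the square-root), so your $O(\epsilon)$ claim survives. None of this changes the final H\"older-$1/2$ conclusion, but your Lipschitz claim for the second term is not justified as stated.
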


  The H\"older $1/2$ estimate is natural, but it is likely possible to strengthen it with more effort. It seems that a full $C^3$ estimate does not hold for general convex regions near the disk, so the $C^{2,\alpha}$ estimate is likely optimal. The general strategy to prove this lemma is to expand $\frac{d^2}{dt^2} M (S)$ explicitly and to carefully control each term. 

  \begin{proof}
  
   Recall that the second variation of $M$ is the following:

\begin{flalign*}
\twodt M = &  3 \int_0^{2 \pi}  \int_S  \int_S   f(r( \theta_1, t) e^{i \theta_1},y,z) \dt  \mu(\theta_1, t) r(\theta_1, t) \,dy \, dz \, d \theta_1  \\
&+  6 \int_0^{2 \pi} \int_0^{2 \pi} \int_S  f(r(\theta_1, t) e^{i \theta_1},r(\theta_2, t) e^{i \theta_2},z) \mu(\theta_1, t) \mu(\theta_2, t) r(\theta_1, t) r(\theta_2, t)  \, dz \,  d \theta_1 \, d \theta_2  \\
&+  3 \int_0^{2 \pi} \int_S \int_S  \frac{\partial}{\partial r} f(r(\theta_1, t) e^{i \theta_1},y,z) \mu(\theta_1)^2 r(\theta_1, t) \, dy \,d z \, d \theta_1  \\
&+  3 \int_0^{2 \pi} \int_S \int_S  f(r(\theta_1, t) e^{i \theta_1},y,z) \mu(\theta_1)^2 \, dy \,d z \, d \theta_1 
\end{flalign*}

Since $\mu(\theta,t)$ is time-independent along the canonical homotopy, we write it as $\mu(\theta)$ and observe that the first term vanishes.

\begin{flalign*}
 \twodt M  = &  6 \int_0^{2 \pi} \int_0^{2 \pi} \int_S  f(r(\theta_1, t) e^{i \theta_1},r(\theta_2, t) e^{i \theta_2},z) \mu(\theta_1) \mu(\theta_2) r(\theta_1, t) r(\theta_2, t)  \, dz \,  d \theta_1 \, d \theta_2  \\
&+  3 \int_0^{2 \pi} \int_S \int_S  \frac{\partial}{\partial r} f(r(\theta_1, t) e^{i \theta_1},y,z) \mu(\theta_1)^2 r(\theta_1, t) \, dy \,d z \, d \theta_1  \\
&+  3 \int_0^{2 \pi} \int_S \int_S  f(r(\theta_1, t) e^{i \theta_1},y,z) \mu(\theta_1)^2 \, dy \,d z \, d \theta_1 
\end{flalign*}

The rest of the proof is showing that each of these terms are uniformly H\"older continuous in $d_{H.}(D,S)$. So as to be more manageable, we split this into three lemmas for each of these terms.

\begin{lemma}[H\"older continuity of the autocorrelation terms]
There exist uniform constants $ e ,C >0$ so that whenever $d_{H.}(S,D) \leq \epsilon < e$, then 

\begin{eqnarray*} \Bigg | \int_0^{2 \pi} \int_0^{2 \pi} \int_S   f(r(\theta_1,t) e^{i \theta_1},r(\theta_2,t) e^{i \theta_2},z) \mu(\theta_1) \mu(\theta_2) r(\theta_1, t) r(\theta_2, t) \, dz \,  d \theta_1 \, d \theta_2  & \\
-   \int_0^{2 \pi} \int_0^{2 \pi} \int_D   f(e^{i \theta_1}, e^{i \theta_2},z) \mu(\theta_1) \mu(\theta_2) \, dz \,  d \theta_1 \, d \theta_2 \Bigg |  & < C \epsilon^{1/2} 
\end{eqnarray*}
\end{lemma}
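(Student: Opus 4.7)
The plan is to control the difference in Lemma 16 by decomposing it so that each piece reduces to either a movement of the two boundary points $r_i e^{i\theta_i}$ away from $e^{i\theta_i}$, or a change in the domain of the innermost integral from $S$ to $D$. Let $X = r(\theta_1,t)e^{i\theta_1}$, $Y = r(\theta_2,t)e^{i\theta_2}$, $X_0 = e^{i\theta_1}$, $Y_0 = e^{i\theta_2}$, and define
\[ \mathcal{A}_\Omega(X',Y') = \int_\Omega f(X', Y', z) \, dz, \]
the area of the ``acute region'' for the two boundary points $X', Y'$ intersected with $\Omega$. The difference we must bound is then
\[ \int_0^{2\pi}\!\!\int_0^{2\pi} \mu(\theta_1)\mu(\theta_2) \Bigl[ r(\theta_1,t) r(\theta_2,t) \mathcal{A}_S(X,Y) - \mathcal{A}_D(X_0, Y_0) \Bigr]\, d\theta_1 d\theta_2. \]

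First I would add and subtract to split the bracket as
\[ (r_1 r_2 - 1)\mathcal{A}_S(X,Y) \;+\; \bigl(\mathcal{A}_S(X,Y) - \mathcal{A}_S(X_0,Y_0)\bigr) \;+\; \bigl(\mathcal{A}_S(X_0,Y_0) - \mathcal{A}_D(X_0,Y_0)\bigr). \]
The first piece is trivially estimated using $|r_1 r_2 - 1| \leq t|\mu_1| + t|\mu_2| + t^2|\mu_1\mu_2|$ and the uniform bound $\mathcal{A}_S \leq \mathrm{Vol}(S) \leq (1+8\epsilon)^2 \pi$. The third piece is controlled by the area of $S \Delta D$, which the Hausdorff bound forces to be at most $\pi[(1+8\epsilon)^2 - (1-8\epsilon)^2] = O(\epsilon)$, giving a pointwise estimate $O(\epsilon)$ independent of $\theta_1,\theta_2$. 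The middle piece is the geometric heart of the argument: moving a single corner of a triangle by distance $\delta$ perturbs the Thales circle and the two supporting half-planes that bound the acute region by $O(\delta)$ in Hausdorff distance (as curves in $D$ of bounded length), so the symmetric difference of acute regions satisfies
\[ \bigl|\mathrm{Acute}(X,Y) \,\Delta\, \mathrm{Acute}(X_0, Y_0)\bigr| \leq C\bigl(|X-X_0| + |Y-Y_0|\bigr) = C\bigl(t|\mu_1| + t|\mu_2|\bigr). \]

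Combining all three pieces yields a pointwise estimate
\[ \bigl| r_1 r_2 \mathcal{A}_S(X,Y) - \mathcal{A}_D(X_0, Y_0) \bigr| \;\leq\; C\bigl(\epsilon + t|\mu(\theta_1)| + t|\mu(\theta_2)| + t^2 |\mu(\theta_1)\mu(\theta_2)|\bigr). \]
Integrating against $|\mu(\theta_1)\mu(\theta_2)|$ and applying Cauchy--Schwarz in the form $\|\mu\|_1 \leq \sqrt{2\pi}\|\mu\|_2 = \sqrt{2\pi}$, together with $t \leq \|g\|_2 \leq \sqrt{2\pi}\,\epsilon$ (from the discussion of the canonical homotopy), controls each contribution uniformly by $C\epsilon$, which is stronger than the $C\epsilon^{1/2}$ claimed.

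The principal obstacle is the middle piece. Because $\mu$ is only in $L^2$ and not $L^\infty$, the displacement $t\mu(\theta_i)$ of a single corner can be arbitrarily large pointwise, so any naive attempt to linearize $\mathcal{A}$ in the corner position fails. The estimate must instead be genuinely uniform in the corner positions: we cannot assume $X$ is near $X_0$. A careful geometric lemma establishing $|\mathrm{Acute}(X,Y) \,\Delta\, \mathrm{Acute}(X',Y')| \leq C(|X-X'| + |Y-Y'|)$ for all admissible $X,Y,X',Y'$ inside $(1+8\epsilon) D$, with the constant $C$ depending only on an upper bound on the size of the enveloping disk, is what makes the argument robust. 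Once this Lipschitz-in-corners estimate is in hand, the remaining bookkeeping of integrating against $\mu_1\mu_2$ and summing over the three pieces is routine.
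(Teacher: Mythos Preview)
Your three–piece decomposition via the triangle inequality is exactly the paper's, and the first and third pieces are handled the same way. The trouble is entirely in the middle piece, and there are two issues.

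First, the Lipschitz-in-corners claim
\[
\bigl|\mathrm{Acute}(X,Y)\,\Delta\,\mathrm{Acute}(X',Y')\bigr|\le C\bigl(|X-X'|+|Y-Y'|\bigr)
\]
is false as stated. When $|X-Y|$ is small, moving $X$ by $\delta$ rotates the direction $X-Y$ by roughly $\delta/|X-Y|$, and the two perpendicular lines bounding the acute strip rotate by the same amount. Within the unit disk these lines have length of order $1$, so the symmetric difference of the half-strips has area of order $\delta/|X-Y|$, not $\delta$. Thus the Lipschitz constant blows up along the diagonal $\theta_1=\theta_2$, and your subsequent integration against $|\mu_1\mu_2|$ produces a non-integrable $1/|\theta_1-\theta_2|$ factor. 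The paper only claims H\"older-$1/2$ continuity of $\int_D f(X,Y,z)\,dz$ in the corners, and it handles the degenerate regimes $|X-Y|\lesssim\epsilon$ and $|X+Y|\lesssim\epsilon$ separately by observing that the acute area is itself $O(|X-Y|)$ (resp.\ $O(\epsilon)$) there.

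Second, and more importantly, your stated ``principal obstacle'' is not an obstacle at all: the displacement $t\mu(\theta_i)$ is \emph{not} pointwise large. Along the canonical homotopy $t\le\|g\|_2$ and $\mu=g/\|g\|_2$, so
\[
|X-X_0|=t|\mu(\theta_1)|=\frac{t}{\|g\|_2}\,|g(\theta_1)|\le |g(\theta_1)|<8\epsilon
\]
uniformly in $\theta_1$, directly from $d_{H.}(S,D)<8\epsilon$. The paper exploits exactly this pointwise bound: it proves a \emph{pointwise} estimate $\bigl|\int_S f(X,Y,z)|X||Y|\,dz-\int_D f(X_0,Y_0,z)\,dz\bigr|<C\epsilon^{1/2}$ uniform in $(\theta_1,\theta_2)$, then integrates against $|\mu_1\mu_2|$ using only $\|\mu\|_1\le\sqrt{2\pi}\|\mu\|_2=\sqrt{2\pi}$. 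With this observation the elaborate bookkeeping of $t|\mu_1|$, $t|\mu_2|$, $t^2|\mu_1\mu_2|$ is unnecessary, and the failure of global Lipschitz continuity becomes irrelevant: one only needs H\"older-$1/2$ for corner displacements of size $8\epsilon$ near $\partial D$, plus the separate treatment of the degenerate strips.
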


\begin{lemma}[Holder continuity of the terms involving $\frac{\partial f}{\partial r}$]
There exist uniform constants $ e, C >0$ so that whenever $d_{H.}(S,D) \leq \epsilon < e$, then 

\begin{eqnarray*} \Bigg |  \int_0^{2 \pi} \int_S \int_S  \frac{\partial}{\partial r} f(r(\theta_1,t)e^{i \theta_1},y,z) r(\theta_1,t) (\mu(\theta_1))^2 \, dy \,d z \, d \theta & \\
-    \int_0^{2 \pi} \int_D \int_D  \frac{\partial}{\partial r} f(e^{i \theta_1},y,z) (\mu(\theta_1))^2 \, dy \,d z \, d \theta_1 \Bigg |  & < C \epsilon^{1/2} 
\end{eqnarray*}

\end{lemma}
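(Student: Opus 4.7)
The plan is to reduce the integrated estimate to a uniform pointwise Hölder-$1/2$ estimate on the inner integral
$$J(\theta_1,S) \;:=\; \int_S\int_S \frac{\partial}{\partial r}f(r(\theta_1,t)e^{i\theta_1},y,z)\,dy\,dz,$$
and then pull the $\theta_1$-integral through using $\|\mu\|_{L^2}=1$. For the disk, Section~2 has already established that $J(\theta_1,D)$ is independent of $\theta_1$ and equals the explicit constant $-L/3$; so it suffices to prove $|J(\theta_1,S)-J(\theta_1,D)|\le C\epsilon^{1/2}$ uniformly in $\theta_1$, together with the trivial estimate $|r(\theta_1,t)-1|\le 8\epsilon$.

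First I would unwind the distributional derivative in the way already indicated in Section~2: because $\int_S\int_S f(re^{i\theta_1},y,z)\,dy\,dz$ is the two-dimensional area of the set of pairs $(y,z)\in S\times S$ making $\triangle(re^{i\theta_1},y,z)$ acute, $J(\theta_1,S)$ is a sum of three surface integrals over the ``right-angle loci'' inside $S\times S$: the locus $L_X$ where the right angle sits at the moving vertex $X=r(\theta_1,t)e^{i\theta_1}$, and the two loci $L_Y,L_Z$ where it sits at one of the interior vertices. Each locus is a $3$-dimensional piecewise-smooth hypersurface in $\R^4$, and the integrand over it is a bounded, explicit geometric factor (essentially the component of $e^{i\theta_1}$ along the unit normal to the locus), so the problem reduces to comparing, locus by locus, the surface measure they carry inside $S\times S$ with the surface measure they carry inside $D\times D$.

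Next I would split this comparison into two contributions. (a) A \emph{base-point shift}, $r(\theta_1,t)e^{i\theta_1}$ versus $e^{i\theta_1}$: the loci translate by at most $8\epsilon$ in Hausdorff distance, and since their integrand is Lipschitz in the base point this contributes at most $O(\epsilon)$. (b) A \emph{domain change}, integrating a given locus over $S\times S$ instead of $D\times D$: the difference is supported on the tubular shell $(S\times S)\Delta(D\times D)$, which sits in an $O(\epsilon)$-neighborhood of $\partial D\times D\cup D\times\partial D$. The heart of the estimate is showing that each right-angle locus meets this shell in an area of size $O(\epsilon^{1/2})$: where the locus is transverse to $\partial D$ the intersection is $O(\epsilon)$, but where it is tangent to $\partial D$ the intersection length is only $\lesssim\sqrt{\epsilon}$. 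The circle-with-diameter-$XY$ locus $L_Z$ is internally tangent to $\partial D$ precisely when $X$ and $Y$ lie antipodally on $\partial D$, and the parallel-line pieces of $L_Y$ (the vertical lines through $X$ in Figure~2) are tangent to $\partial D$ when $Y$ approaches $\partial D$; I would catalog these tangencies, parameterize the locus by arc length near each one, and apply the elementary estimate that a smooth curve tangent to $\partial D$ meets a shell of width $\epsilon$ in an arc of length $\lesssim\sqrt\epsilon$.

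Finally, with the pointwise bound $|J(\theta_1,S)-J(\theta_1,D)|\le C\epsilon^{1/2}$ in hand, the triangle inequality and the $L^2$ normalization $\int_0^{2\pi}\mu^2\,d\theta_1=1$ convert this into the claimed integrated Hölder-$1/2$ bound, after absorbing the benign factor $r(\theta_1,t)=1+O(\epsilon)$. The main obstacle throughout is the tangency analysis in step~(b); it is exactly these tangential intersections that preclude a full Lipschitz bound on $d^2M/dt^2$ and that pin the regularity at $C^{2,1/2}$.
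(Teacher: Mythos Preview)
Your overall architecture matches the paper's proof closely: both write the distributional derivative as a sum of three right-angle-locus integrals, split the comparison into a base-point shift and a domain change, and identify the tangential intersections of the loci with $\partial D$ as the mechanism that limits the regularity to $C^{2,1/2}$. The paper's transversality lemma plays exactly the role of your ``a smooth curve tangent to $\partial D$ meets a shell of width $\epsilon$ in an arc of length $\lesssim\sqrt{\epsilon}$'' estimate, and the paper carries out the tangency catalog you sketch (the three bad configurations are $Y$ near the line through $\pm X$, $Y$ near $-X$, and $|X-Y|$ small).

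There is, however, a genuine gap in your step (a). You claim the base-point shift contributes $O(\epsilon)$ because ``the loci translate by at most $8\epsilon$ in Hausdorff distance, and \dots\ their integrand is Lipschitz in the base point.'' This is not enough: even if the locus moves by $O(\epsilon)$ in Hausdorff distance and the integrand is Lipschitz, the \emph{length of the locus restricted to $D$} need not vary Lipschitz in $X$. Precisely the same tangency phenomenon you isolate in (b) bites here: when the semicircle $\{Z:\measuredangle XZY=\pi/2\}$ or the perpendicular line $\{Z:\measuredangle YXZ=\pi/2\}$ is nearly tangent to $\partial D$, moving $X$ by $\epsilon$ can change the arclength of the locus inside $D$ by order $\sqrt{\epsilon}$. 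The paper treats this explicitly and gets only $O(\epsilon^{1/2})$ for the base-point shift, via the same bad-set-of-$Y$ mechanism (excise $Y$ with $|(X-Y)\cdot X|\le\epsilon^{1/2}$ or $|X-Y|\le\epsilon^{1/2}$, bound the excised set's measure by $O(\epsilon^{1/2})$, and use the uniform bound on the inner integral there). This does not damage your final $C\epsilon^{1/2}$ conclusion, but your justification of (a) needs to be replaced by this argument.
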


\begin{lemma} [Lipschitz continuity of the arc-length term]
There exist uniform constants $ e, C >0$ so that whenever $\epsilon < e$ and $d_{H.}(S,D) \leq \epsilon$, 
\begin{eqnarray*}
 \Bigg | \int_0^{2 \pi} \int_S \int_S  f(r(\theta_1, t) e^{i \theta_1},y,z) \mu(\theta_1)^2 \, dy \,d z \, d \theta_1 -
  \int_0^{2 \pi} \int_D \int_D  f(e^{i \theta_1},y,z) \mu(\theta_1)^2 \, dy \,d z \, d \theta_1 \Bigg | < C \epsilon
  \end{eqnarray*}
\end{lemma}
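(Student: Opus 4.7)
The plan is to prove a uniform pointwise (in $\theta_1$) estimate on the inner double integral and then integrate against $\mu^2$, exploiting the normalization $\|\mu\|_{L^2([0,2\pi])} = 1$ built into the canonical homotopy. Define
$$h(p,\Omega) := \int_\Omega \int_\Omega f(p,y,z)\,dy\,dz,$$
the mass of acute triangles in $\Omega$ with one vertex pinned at $p$. It suffices to establish the pointwise bound
$$\bigl|\,h(r(\theta_1,t)e^{i\theta_1},S) - h(e^{i\theta_1},D)\,\bigr| \leq C\epsilon$$
uniformly in $\theta_1$ and in $t \in [0,\|g\|_2]$, since multiplying by $\mu(\theta_1)^2$ and integrating over $[0,2\pi]$ then produces the factor $\|\mu\|_{L^2}^2 = 1$ and delivers the lemma.

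Write $p_0 := e^{i\theta_1}$ and $p_1 := r(\theta_1,t)e^{i\theta_1}$; along the canonical homotopy $|p_1 - p_0| = |r(\theta_1,t) - 1| \leq \|g\|_\infty \leq \epsilon$. I would split
$$h(p_1,S) - h(p_0,D) = \bigl[h(p_1,S) - h(p_1,D)\bigr] + \bigl[h(p_1,D) - h(p_0,D)\bigr].$$
For the first bracket, the containment
$$(S\times S)\triangle(D\times D) \subseteq (S\triangle D)\times(S\cup D)\,\cup\,(S\cup D)\times(S\triangle D),$$
combined with $|S\triangle D| = O(\epsilon)$ (from Hausdorff closeness and convexity of $S$) and $0\le f\le 1$, yields a bound of $O(\epsilon)$.

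For the second bracket, the integrand $f(p_1,y,z) - f(p_0,y,z)$ is supported on the set of $(y,z) \in D\times D$ where exactly one of $\triangle(p_0,y,z)$, $\triangle(p_1,y,z)$ is acute. For fixed $y$, the locus in $z$ on which $\triangle(p,y,z)$ is right-angled is the union of an arc of the Thales circle with diameter $py$, the line through $y$ perpendicular to $py$, and the line through $p$ perpendicular to $py$; each component depends in a Lipschitz manner on $p$. As $p$ slides from $p_0$ to $p_1$, these three curves sweep out a planar region of area $O(|p_1 - p_0|)$; integrating over $y \in D$ gives $|h(p_1,D) - h(p_0,D)| = O(\epsilon)$.

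The main obstacle is making the slicewise sweep estimate for the second bracket rigorous near the intersections of the Thales circle with the two perpendicular lines, and uniformly as $y$ ranges over $D$ (including $y$ near $p$, where the construction degenerates). Each component is smooth in $(p,y,z)$ away from a controlled singular set, so a slicewise coarea/tubular-neighborhood argument handles the generic case, while the degeneracies contribute only a set of small measure to the final integral. Combining the two bracket estimates yields the pointwise Lipschitz bound, and integrating against $\mu^2$ completes the proof.
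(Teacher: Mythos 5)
Your proposal is correct and follows essentially the same route as the paper: the identical triangle-inequality split into a domain-change term (bounded by the measure of $(S\times S)\,\Delta\,(D\times D)$) and a vertex-motion term handled by the sweep of the three right-angle loci. The paper makes the sweep rigorous simply by bounding $\bigl|\frac{d}{dt}\int_D\int_D f(\bar X + tV,y,z)\,dy\,dz\bigr|$ by the total length of those loci (at most $2\pi+4$ for each $Y$, hence $\pi(2\pi+4)$ after integrating over $Y\in D$), which makes the degeneracy worries you raise moot since the integrand of the derivative is bounded by $1$ and the length bound is uniform.
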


\subsection{H\"older continuity of the autocorrelation term}

In this section, we prove Lemma 14, which we restate here.

\begin{lemma*}
There exist uniform constants $ e ,C >0$ so that whenever $d_{H.}(S,D) \leq \epsilon < e$, then 

\begin{eqnarray*} \Bigg | \int_0^{2 \pi} \int_0^{2 \pi} \int_S   f(r(\theta_1) e^{i \theta_1},r(\theta_2) e^{i \theta_2},z) \mu(\theta_1) \mu(\theta_2) r(\theta_1, t) r(\theta_2, t) \, dz \,  d \theta_1 \, d \theta_2  & \\
-   \int_0^{2 \pi} \int_0^{2 \pi} \int_D   f(e^{i \theta_1}, e^{i \theta_2},z) \mu(\theta_1) \mu(\theta_2) \, dz \,  d \theta_1 \, d \theta_2 \Bigg |  & < C \epsilon^{1/2} 
\end{eqnarray*}
\end{lemma*}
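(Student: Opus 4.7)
The plan is to write the difference as a telescoping sum and bound each piece. Abbreviating $r_i := r(\theta_i, t)$, I decompose the quantity inside the absolute value, call it $\Delta$, as $\Delta_{\rm dom} + \Delta_{\rm rad} + \Delta_f$, where $\Delta_{\rm dom}$ accounts for changing the inner integration domain from $S$ to $D$, $\Delta_{\rm rad}$ accounts for replacing the Jacobian factor $r_1 r_2$ with $1$, and $\Delta_f$ accounts for moving the two boundary vertices from $r_i e^{i\theta_i}$ to $e^{i\theta_i}$.

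For $\Delta_{\rm dom}$, the symmetric difference $S \triangle D$ has area at most $C\epsilon$ (by $d_{H.}(S,D) \leq \epsilon$ together with convexity), while the integrand is pointwise dominated by $(1+\epsilon)^2 |\mu(\theta_1)\mu(\theta_2)|$. Fubini together with Cauchy-Schwarz and the normalization $\|\mu\|_2 = 1$ yields $|\Delta_{\rm dom}| \leq C\epsilon$. A nearly identical argument, using $|r_1 r_2 - 1| \leq 3\epsilon$, gives $|\Delta_{\rm rad}| \leq C\epsilon$.

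The main work is in $\Delta_f$, which takes the form
\[ \Delta_f = \int_0^{2\pi}\!\!\int_0^{2\pi} \mu(\theta_1)\mu(\theta_2) \, \Phi(\theta_1,\theta_2) \, d\theta_1 \, d\theta_2, \]
with $\Phi(\theta_1,\theta_2) := \int_D [\, f(r_1 e^{i\theta_1}, r_2 e^{i\theta_2}, z) - f(e^{i\theta_1}, e^{i\theta_2}, z)\, ]\, dz$ measuring the signed $z$-area of the symmetric difference of the acute-triangle regions when the two boundary vertices are perturbed. Acuteness of the triangle with vertices $v_1, v_2, z$ is cut out by three inequalities: two half-planes (from the right-angle conditions at $v_1$ and $v_2$) and the exterior of the disk with diameter $\overline{v_1 v_2}$. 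Away from the degenerate regime $\theta_1 \approx \theta_2$, each of the three bounding curves is displaced by $O(\epsilon)$ when the vertices move by $\epsilon$, and the total arc length inside $D$ is bounded, giving the pointwise estimate $|\Phi(\theta_1,\theta_2)| \leq C\epsilon$.

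The hard part will be the near-degenerate regime $|\theta_1 - \theta_2| \lesssim \epsilon$, where $v_1$ and $v_2$ are nearly coincident: the diameter disk has radius $O(\epsilon)$, and the two perpendicular lines can rotate by an $O(1)$ angle under an $O(\epsilon)$ perturbation of $v_1$, so only the trivial bound $|\Phi(\theta_1,\theta_2)| \leq C$ is available. The contribution of this near-diagonal strip is controlled by applying Cauchy-Schwarz in the inner $\theta_2$-variable: the strip has width $O(\epsilon)$ in $\theta_2$, so $\int_{|\theta_2 - \theta_1| \lesssim \epsilon} |\mu(\theta_2)|\, d\theta_2 \leq O(\epsilon^{1/2})\|\mu\|_2 = O(\epsilon^{1/2})$ uniformly in $\theta_1$. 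Integrating this against $|\mu(\theta_1)|$ and using $\|\mu\|_1 \leq \sqrt{2\pi}\|\mu\|_2$ gives an $O(\epsilon^{1/2})$ contribution from the near-diagonal, which dominates the $O(\epsilon)$ contribution from the generic regime and produces the claimed H\"older-$1/2$ estimate. The sharpness of this balance between a large pointwise bound on a thin diagonal strip and a small pointwise bound elsewhere is what prevents a Lipschitz estimate and explains the regularity asserted in the lemma.
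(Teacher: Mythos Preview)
Your telescoping decomposition matches the paper's, and your treatment of $\Delta_{\rm dom}$ and $\Delta_{\rm rad}$ is correct. The gap is in $\Delta_f$. Your claim that ``each of the three bounding curves is displaced by $O(\epsilon)$'' away from the diagonal is not right: when $v_1$ moves by $\epsilon$, the direction $(v_1-v_2)/|v_1-v_2|$ rotates by $O(\epsilon/|v_1-v_2|)$, and since $v_1$ sits near $\partial D$ (so near one endpoint of the chord $\ell_1\cap D$, with no cancellation from pivoting), the perpendicular line sweeps out area $O(\epsilon/|v_1-v_2|)$ inside $D$, not $O(\epsilon)$. With your cutoff at $|\theta_1-\theta_2|\sim\epsilon$ this gives only the trivial $O(1)$ bound on the ``generic'' set, and even if you optimize the cutoff scale $\eta$, balancing $O(\epsilon/\eta)$ off-diagonal against your Cauchy--Schwarz bound $O(\sqrt\eta)$ on the strip yields at best $O(\epsilon^{1/3})$, not $O(\epsilon^{1/2})$.

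The missing observation, which the paper uses, is that near the diagonal the quantity $\int_D f(X,Y,z)\,dz$ is itself small: the acute region lies in the strip between the two perpendiculars, whose width is exactly $|X-Y|$, so $\int_D f(X,Y,z)\,dz\le 2|X-Y|$. Combining this with the $O(\epsilon/|X-Y|)$ Lipschitz constant gives the \emph{uniform} pointwise bound $|\Phi(\theta_1,\theta_2)|\le C\epsilon^{1/2}$ (split at $|X-Y|\sim\sqrt\epsilon$), after which one simply integrates against $|\mu(\theta_1)\mu(\theta_2)|$---no Cauchy--Schwarz trick is needed. You also omit the anti-diagonal regime $X\approx -Y$, where the perpendiculars are again nearly tangent to $\partial D$; the paper handles this separately by showing that there too both areas are $O(\epsilon)$ (almost none of $D$ lies outside the diameter disk when $X,Y$ are near-antipodal on $\partial D$).
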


\begin{proof}

To prove this, we note that it is sufficient to show that there is a $C$, uniform in $\theta_1$ and $\theta_2$, so that the following estimate holds:
\[ \Bigg | \int_S   f(r(\theta_1,t ) e^{i \theta_1},r(\theta_2,t) e^{i \theta_2},z) r(\theta_1, t) r(\theta_2, t) \, dz - \int_D   f(e^{i \theta_1}, e^{i \theta_2},z) \, dz \Bigg | < C \epsilon^{1/2}
\]

If we can show this estimate, it immediately implies the following
\begin{eqnarray*} \Bigg | \int_0^{2 \pi} \int_0^{2 \pi} \int_S   f(r(\theta_1,t) e^{i \theta_1},r(\theta_2,t) e^{i \theta_2},z) \mu(\theta_1) \mu(\theta_2) r(\theta_1, t) r(\theta_2, t) \, dz \,  d \theta_1 \, d \theta_2  & \\
-   \int_0^{2 \pi} \int_0^{2 \pi} \int_D   f(e^{i \theta_1}, e^{i \theta_2},z) \mu(\theta_1) \mu(\theta_2) \, dz \,  d \theta_1 \, d \theta_2 \Bigg | & \\
 < \int_0^{2 \pi}  \int_0^{2 \pi} C \epsilon^{1/2} \,  d \theta_1 \, d \theta_2 = 4 \pi^2 C \epsilon^{1/2} &\\
\end{eqnarray*}

To do this, it is convenient to let $X=r(\theta_1) e^{i \theta_1}$, $Y=r(\theta_2) e^{i \theta_2}$, and use the triangle inequality to estimate the previous term.

\begin{eqnarray*}
\Bigg | \int_S   f(X,Y,z) |X| |Y| \, dz - \int_D   f\left( \frac{X}{|X|}, \frac{Y}{|Y|},z \right) \, dz \Bigg | &\leq& \Bigg | \int_S   f(X,Y,z) \, dz - \int_D   f(X,Y,z) |X| |Y| \, dz \Bigg | \\
 & & +\Bigg | \int_D   f(X,Y,z) |X| |Y| \, dz - \int_D   f\left( X,Y,z \right) \, dz \Bigg | \\
 & & +\Bigg | \int_D   f(X,Y,z) \, dz - \int_D   f\left( \frac{X}{|X|}, \frac{Y}{|Y|},z \right) \, dz \Bigg | 
\end{eqnarray*}

To prove this lemma, we show that all of these terms are $O(\epsilon^{1/2})$. 

For the first term, note that $f(X,Y,z) \leq 1$ and that $d_{H.}(S,D) < 8 \epsilon$. Therefore, we have the following estimate:

$$\Bigg | \int_S   f(X,Y,z) \, dz - \int_D   f(X,Y,z) \, dz \Bigg | \leq Vol(S \Delta D) < 16 \pi \epsilon + 64 \epsilon^2$$

To control the second term, note $|1-|X||, |1-|Y||  < 8 \epsilon$, and that $\int_D   f(X,Y,z) |X| |Y| \, dz \leq \pi^2$. This yields the following estimate:
\[ \Bigg | \int_D   f(X,Y,z) |X| |Y| \, dz - \int_D   f\left( X,Y,z \right) \, dz \Bigg | < ((1+8 \epsilon)^2-1) \pi^2 = \pi^2 \epsilon+64 \pi^2 \epsilon^2 \]

Therefore, the first two term are $O(\epsilon)$, and so $O(\epsilon^{1/2})$ as well.
To show that the final term is $O(\epsilon^{1/2})$, we show that that  $\int_D   f(X, Y,z) \, dz$ is H\"older $1/2$ continuous in $X$ and $Y$, when $X$ and $Y$ are near the boundary of the disk. Referring back to Figure 1, $ \int_D   f(X, Y,z) \, dz$ is the area between the two parallel lines outside of the smaller disk. 
Using Euclidean geometry, we can write $\int_D   f(X, Y,z) \, dz$ explicitly. 
In most cases, this is given by the following formula.

\begin{eqnarray*}
\int_D   f(X, Y,z) \, dz  = &  \frac{1}{2}( \arccos(\frac{X \cdot(X-Y)}{|X-Y|}) - \arccos(\frac{Y \cdot(X-Y)}{|X-Y|}) \\
& - \sqrt{1 - (\frac{X \cdot(X-Y)}{|X-Y|})^2 } +  \sqrt{1-(\frac{X \cdot(X-Y)}{|X-Y|})^2 } )\\
& - r^2 \arccos(\frac{d^2+r^2 -1}{2dr}) + \arccos(\frac{d^2+1 -r^2}{2d})\\
& - \frac{1}{2} \sqrt{(-d+r+1)(-d+r-1)(d-r+1)(d+r+1)}
\end{eqnarray*}

Here, $d = |X+Y|/2$ and $r = |X-Y|/2$. For a given pair $X, Y$, it is possible that there are fewer terms. For instance, if the small disk is contained entirely in the larger one or one of the lines does not intersect the unit disk, there will be fewer terms. By inspection, this is H\"older-1/2 continuous whenever $|X-Y|$ and $|X+Y|$ are not too small (say both greater than $48 \epsilon$).  

We can use geometry to show that this expression is small whenever $|X-Y| < 64 \epsilon$ or $|X+Y|< 64 \epsilon$. To do so, note that that $\int_D   f(X, Y,z) \, dz < 2 |X-Y|$, as the disk has diameter 2 and the width of the parallel lines in Figure 1 is $|X-Y|$. Therefore, when $|X-Y|$ is small, $\int_D   f(X, Y,z) \, dz$ is Lipschitz in $|X-Y|$.
Furthermore, if $|X+Y|$ is small and $X$ and $Y$ lie on the boundary of $S$ (which is at most $8 \epsilon$ from the boundary of $D$), then $X$ and $Y$ are nearly antipodes of the circle. In this case, very little of the unit disk will lie outside the disk defined by $X$ and $Y$, as shown in Figure 3.

\begin{center}
\includegraphics[width=150mm,scale=0.4]{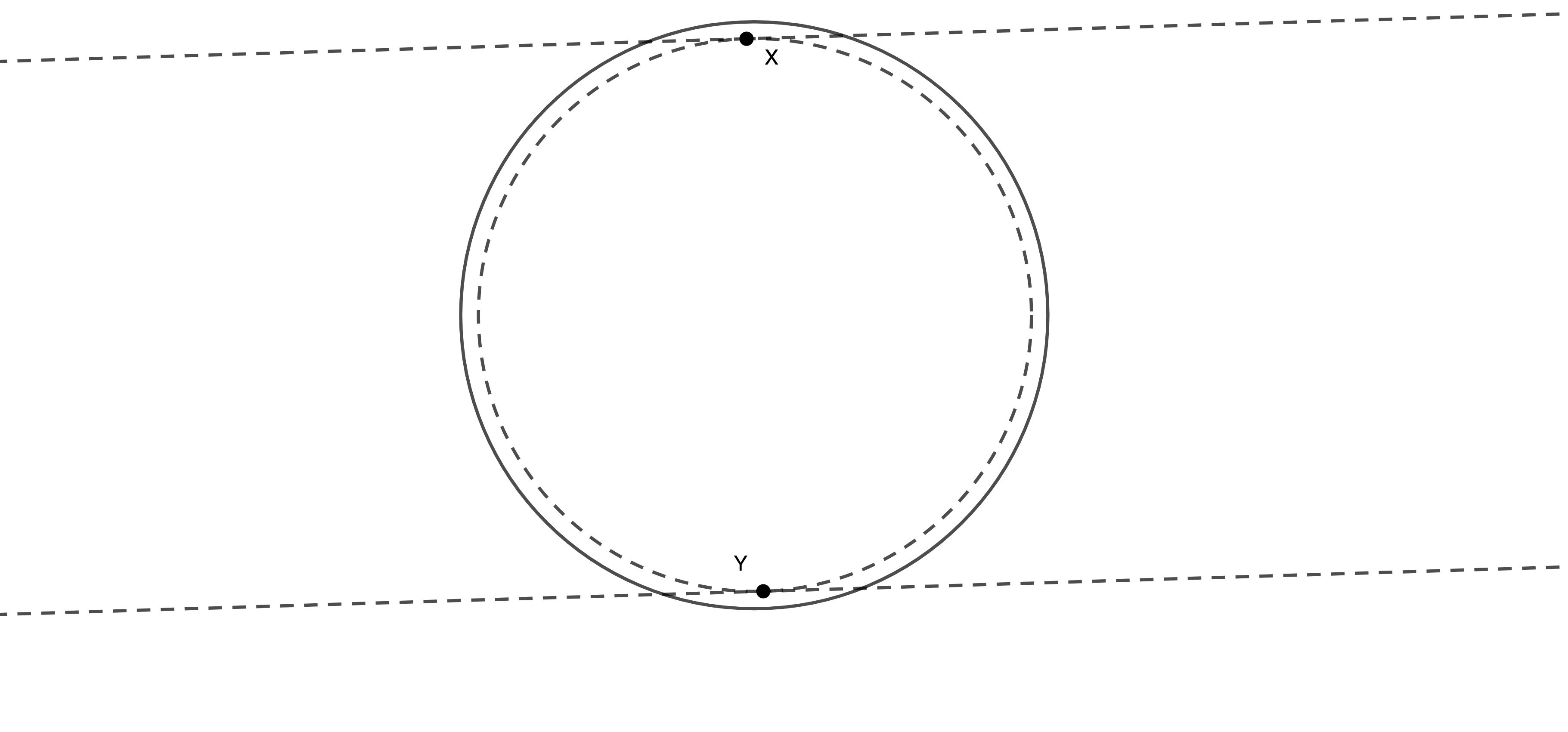}
Figure 3. 
\end{center}

To state this precisely, whenever $|X+Y|< 64 \epsilon$: 
\[ \int_D   f(X, Y,z) \, dz < \pi |1- |X|^2| +  |1- |Y|^2| < 32\pi \epsilon + 128 \pi \epsilon^2\]

\end{proof}


\subsection{Holder continuity of the terms involving $\frac{\partial f}{\partial r}$ }

In this section, we prove Lemma 15, which we restate here.

\begin{lemma*}
There exist uniform constants $ e, C >0$ so that whenever $d_{H.}(S,D) \leq \epsilon < e$, then 

\begin{eqnarray*} \Bigg | \int_0^{2 \pi} \int_S \int_S  \frac{\partial}{\partial r} f((r(\theta_1) e^{i \theta_1},y,z) r(\theta_1) (\mu(\theta_1))^2 \, dy \,d z \, d \theta & \\
-    \int_0^{2 \pi} \int_D \int_D  \frac{\partial}{\partial r} f(e^{i \theta_1},y,z) (\mu(\theta_1))^2 \, dy \,d z \, d \theta_1 \Bigg |  & < C \epsilon^{1/2} 
\end{eqnarray*}

\end{lemma*}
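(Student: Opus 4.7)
The plan is to parallel the strategy of Lemma 14: first reduce to a uniform pointwise estimate in $\theta_1$, then analyze the distributional derivative as a classical surface integral. Define $G_\Omega(X) := \iint_{\Omega\times\Omega} f(X, y, z)\, dy\, dz$ and let $F_\Omega(X) := \partial_r G_\Omega(r\hat{u})\big|_{r\hat u = X}$, which is the classical radial derivative representing the inner integral $\int_\Omega \int_\Omega \partial_r f(X, y, z)\, dy\, dz$. Since $\|\mu\|_{L^2}^2 = 1$ and $F_D(e^{i\theta_1})$ equals the constant $-L/3$ (by the computation in the proof of Theorem 1), the lemma reduces to the uniform pointwise bound
\[ |r(\theta_1) F_S(X_S) - F_D(X_D)| \leq C\sqrt{\epsilon}, \qquad X_S := r(\theta_1)e^{i\theta_1}, \quad X_D := e^{i\theta_1}. \]
Writing $r(\theta_1) F_S(X_S) - F_D(X_D) = (r(\theta_1)-1)F_D + r(\theta_1)[F_S(X_S) - F_D(X_D)]$, the first summand is $O(\epsilon)$ since $|r(\theta_1)-1| \leq 8\epsilon$ and $F_D$ is a fixed constant, so the task becomes $|F_S(X_S) - F_D(X_D)| \leq C\sqrt{\epsilon}$.

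To compute $F_\Omega(X)$ classically, apply the coarea formula to write $F_\Omega(X) = I_1^\Omega(X) + I_2^\Omega(X) + I_3^\Omega(X)$, where $I_k^\Omega(X)$ is the signed flux integral over the codimension-one hypersurface $\Sigma_k(X) \cap (\Omega\times\Omega)$ on which the $k$-th angle of $\triangle(X, y, z)$ equals $\pi/2$. For $\Sigma_1(X)$ (angle at $X$), parametrize by $(s_1, s_2, \alpha) \mapsto (X + s_1 e^{i\alpha}, X + s_2 e^{i(\alpha + \pi/2)})$: the Gram-determinant surface element exactly cancels the denominator in the normal-velocity formula, leaving the clean integrand $\hat u \cdot (s_1 e^{i\alpha} + s_2 e^{i(\alpha+\pi/2)})$ integrated against $ds_1\, ds_2\, d\alpha$, with upper limits on $s_1, s_2$ given by the exit distances of the two rays from $X$ through $\Omega$. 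Analogous Thales-circle parametrizations give explicit formulas for $I_2$ and $I_3$ (the angle at $y$, respectively $z$, being $\pi/2$).

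The comparison $I_k^S(X_S) - I_k^D(X_D)$ then splits into two contributions: (a) the shift of $\Sigma_k(X)$ due to $X_D \mapsto X_S$, bounded by $|X_S - X_D| \leq 8\epsilon$ times the Lipschitz dependence of the integrand on $X$; and (b) the change in the exit-distance functions $s^{\max, \Omega}(X, \alpha)$ due to $\partial D \mapsto \partial S$, bounded pointwise in $\alpha$ by $d_{H.}(S, D) \leq 8\epsilon$. Away from degenerate configurations, both pieces yield an $O(\epsilon)$ bound directly from these Lipschitz estimates.

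The main obstacle, exactly as in Lemma 14, is handling the near-degenerate configurations: rays from $X$ nearly tangent to $\partial\Omega$ (where $s^{\max}(X,\alpha)$ has a $1/2$-order singularity in $\alpha$), pairs $(y, z)$ with $|y - z|$ small (the Thales circle collapsing and the parametrization for $\Sigma_2, \Sigma_3$ becoming singular), and pairs nearly antipodal on the disk. These are the exact analogues of the $|X-Y|\to 0$ and $|X+Y|\to 0$ degeneracies that forced the H\"older-$1/2$ behavior in Lemma 14. In a parameter-space strip of width $O(\sqrt{\epsilon})$ around each such degeneracy, the integrand is bounded by $O(\sqrt{\epsilon})$ while the parameter measure is $O(1)$, contributing $O(\sqrt{\epsilon})$ to the total error; this degenerate-regime term dominates the interior $O(\epsilon)$ estimates and produces the final H\"older-$1/2$ scaling. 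Combining the bounds for $k = 1, 2, 3$ gives $|F_S(X_S) - F_D(X_D)| \leq C\sqrt{\epsilon}$, completing the proof.
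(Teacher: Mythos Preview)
Your approach is essentially the same as the paper's: write the distributional radial derivative as three classical line integrals over the right-angle loci $\Sigma_k$, then compare each piece under the base-point shift $X_D \mapsto X_S$ and the domain change $D \mapsto S$, isolating the degenerate configurations (near-tangent chords, small $|X-Y|$, near-antipodal pairs) that force the $\sqrt{\epsilon}$ rate. The paper organizes the triangle-inequality split slightly differently---its pieces (10), (11), (12) separate the change in the $Z$-domain, the change in $X$, and the change in the $Y$-domain---and it isolates the key geometric input as an explicit transversality lemma bounding the length of a chord inside an $\epsilon$-annulus by $C\epsilon/\sin\phi$. Your exit-distance formulation packages the same content more abstractly. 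One imprecision worth fixing: the change in the exit distance $s^{\max,\Omega}(X,\alpha)$ under $\partial D \mapsto \partial S$ is \emph{not} bounded pointwise by $8\epsilon$ as you write, but by $C\epsilon/\sin\phi$ with $\phi$ the incidence angle at $\partial D$; this is precisely why near-tangent rays must be placed in the degenerate set (not merely because $s^{\max}$ has a square-root singularity in $\alpha$), and once they are, your argument goes through.
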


\begin{proof}

Before, we were content to interpret $\frac{\partial f}{\partial r}$ distributionally, without worrying about the formal definition. However, for this step we write this out explicitly using classical integrals. To make the calculation more intuitive, we write $X=r(\theta_1) e^{i \theta_1}.$

\begin{flalign*}
 \int_0^{2 \pi} \int_S \int_S  \frac{\partial}{\partial r} f(X,y,z) (\mu(\theta_1))^2 & \, dy \,d z \, d \theta_1 \\
 &  =  \int_0^{2 \pi} \int_S \int_{ \{ Z \in S | \measuredangle(YXZ)= \pi/2\} } \frac{(X-Y)}{|X-Y|} \cdot \frac{X}{|X|} |X| dZ  \,d Y \, \mu(X)^2 d \theta_1 \\
   &-   \int_0^{2 \pi} \int_S \int_{ \{ Z \in S | \measuredangle(XYZ)= \pi/2 \} } \frac{(X-Y)}{|X-Y|} \cdot \frac{X}{|X|} |X| dZ  \,d Y \, \mu(X)^2 d \theta_1 \\
   &+   \int_0^{2 \pi} \int_S \int_{ \{Z \in S | \measuredangle(XZY)= \pi/2 \}} \frac{ Z -(X+Y)/2}{|Z -(X+Y)/2|} \cdot \frac{X}{|X|} |X| dZ  \,d Y \, \mu(X)^2 d \theta_1 
\end{flalign*}

We want to show that each of these three terms are H\"older continuous in Hausdorff distance. To do this, we use we use the triangle inequality on the inner two integrals as in the previous step. For conciseness, we will only write this out fully for the first term and we denote $\bar X = \frac{X}{|X|}$.

\begin{eqnarray}
\nonumber & \Bigg | \int_S \int_{ \{ Z \in S | \measuredangle(YXZ)= \pi/2\} } \frac{(X-Y)}{|X-Y|} \cdot X dZ  \,d Y   \\ 
\nonumber &- \int_D \int_{ \{ Z \in D | \measuredangle(YXZ)= \pi/2\} } \frac{(X-Y)}{|X-Y|} \cdot \frac{X}{|X|} dZ  \,d Y  \Bigg | \\
\leq & \Bigg | \int_S \int_{ \{ Z \in S | \measuredangle(YXZ)= \pi/2\} } \frac{(X-Y)}{|X-Y|} \cdot X dZ  \,d Y \\ 
\nonumber&-\int_S \int_{ \{ Z \in D | \measuredangle(YXZ)= \pi/2\} } \frac{(X-Y)}{|X-Y|} \cdot X dZ  \,d Y \Bigg | \hfill  \\
+ & \Bigg | \int_S \int_{ \{ Z \in D | \measuredangle(YXZ)= \pi/2\} } \frac{(X-Y)}{|X-Y|} \cdot X dZ  \,d Y \\ 
\nonumber&-\int_S \int_{ \{ Z \in D | \measuredangle(Y \bar XZ)= \pi/2\} } \frac{(\bar X-Y)}{|\bar X-Y|} \cdot \bar X dZ  \,d Y \Bigg |  \\
+& \Bigg | \int_S \int_{ \{ Z \in D | \measuredangle(Y \bar X Z)= \pi/2\} } \frac{( \bar X-Y)}{| \bar X-Y|} \cdot \bar X dZ  \,d Y \\ 
\nonumber&-\int_D \int_{ \{ Z \in D | \measuredangle(Y \bar X Z)= \pi/2\} } \frac{(\bar X-Y)}{| \bar X-Y|} \cdot \bar X dZ  \,d Y \Bigg | 
\end{eqnarray}

\subsection{Proving that $(11)$ is $O(\epsilon^{1/2})$}

The title of this subsection is a bit of a misnomer. Not only must we bound (13), we need the same estimate on  
\[ \Bigg | \int_S \int_{ \{ Z \in D | \measuredangle(XYZ)= \pi/2\} } \frac{(X-Y)}{|X-Y|} \cdot X dZ  \,d Y -\int_S \int_{ \{ Z \in D | \measuredangle( \bar X YZ)= \pi/2\} } \frac{(\bar X-Y)}{|\bar X-Y|} \cdot \bar X dZ  \,d Y \Bigg | \]
\centerline{ and }
\[   \Bigg | \int_S \int_{ \{ Z \in D | \measuredangle(XZY)= \pi/2\} }  \frac{ Z -(X+Y)/2}{|Z -(X+Y)/2|} \cdot X dZ  \,d Y -\int_S \int_{ \{ Z \in D | \measuredangle( \bar XZY)= \pi/2\} }  \frac{ Z -(\bar X+Y)/2}{|Z -(\bar X+Y)/2|} \cdot \bar X dZ  \,d Y \Bigg | . \]

Since $X$ lies on the boundary of $S$, $|X-\bar X| < 8 \epsilon$, it is sufficient to show that (13) is uniformly Holder continuous in terms of $X$. To do this, we consider the innermost integral (e.g. $ \int_{ \{ Z \in D | \measuredangle(YXZ)= \pi/2\} } \frac{(X-Y)}{|X-Y|} \cdot X \,dZ$), and show the following three estimates:

\begin{enumerate}
\item Away from a small set of $Y$, this term is uniformly H\"older-$1/2$ continuous as a function of $X$.
\item The small set of $Y$'s where the uniform H\"older estimate fails (in terms of $X$) has size $O(\epsilon^{1/2})$.
\item The innermost integrals are uniformly bounded.
\end{enumerate}

 With this overview out of the way, we now do this precisely.

\begin{enumerate}

\item 
We first show that the following terms are H\"older continuous in $X$, so long as $Y$ avoids a small set:
 $$\int_{ \{ Z \in D | \measuredangle(YXZ)= \pi/2\} } \frac{(X-Y)}{|X-Y|} \cdot X dz $$ 
$$\int_{ \{ Z \in D | \measuredangle(XYZ)= \pi/2 \} } \frac{(X-Y)}{|X-Y|} \cdot X  dz$$
$$\int_{ \{Z \in D | \measuredangle(XZY)= \pi/2 \}} \frac{ Z -(X+Y)/2}{|Z -(X+Y)/2|} \cdot X dz$$


To do this, we show that both the integrand and bounds of the integral are uniformly H\"older continuous in $X$. 

For the first two terms, the integrand is H\"older-continuous in $X$, so long as $|X-Y|$ is not too small. 
Furthermore, if $|(X-Y)\cdot X| > \epsilon^{1/2}$ and $|X-Y| > \epsilon^{1/2}$, then the bounds of the first two integrals are H\"older continuous in $X$. To see this, note that the length of a chord through a point is uniformly Lipschitz in the angle that defines the chord, unless the point lies near the boundary on the disk and chord is nearly tangent to the boundary. Furthermore, the angle defining the chord is H\"older $1/2$ continuous in $X$ whenever $|X-Y| > \epsilon^{1/2}$. 

The integrand in the third term is uniformly H\"older continuous, except on the set where $|Z -(X+Y)/2|$ is small. Since $Z$ lies on a semicircle with diameter $\overline{XY}$, this can only happen when $|X-Y|$ is small. It is worth noting that when $Y$ is close to the line $y=0$, as the length of $\{Z \in D | \measuredangle(XZY)= \pi/2 \} $ is only H\"older continuous in $X$, not Lipschitz.

Combining all of this, we have uniform H\"older continuity of $ \int_D  \frac{\partial}{\partial r} f(X,Y,z) dz$ for $X$ near the boundary of $D$, so long as $Y$ is away from where $|(X-Y)\cdot X| \leq \epsilon^{1/2}$ and $|X-Y| \leq \epsilon^{1/2}$.

\item We now show that the measure of the set where the H\"older estimate fails is $O(\epsilon^{1/2})$.
Note that the set $\{ Y |  |X-Y| < \epsilon^{1/2} \}$ has measure at most $\pi \epsilon$. The set $\{ Y \in D | |(X-Y)\cdot X| < \epsilon^{1/2} \}$ is contained in a circular segment near $X$. We can find the area of this segment using Euclidean geometry, which shows that it is $O(\epsilon^{3/4})$.




\item Given $X, Y \in \mathbb{R}^2$ with $|X| < 1+8 \epsilon$, we can make the following three uniform estimates:
\begin{eqnarray*}
\Bigg | \int_{ \{ Z \in D | \measuredangle(YXZ)= \pi/2\} } \frac{(X-Y)}{|X-Y|} \cdot X dz \Bigg | \leq 2 |X|\leq 2(1+8 \epsilon)
 \end{eqnarray*}
 \begin{eqnarray*}
\Bigg | \int_{ \{ Z \in D | \measuredangle(XYZ)= \pi/2 \} } \frac{(X-Y)}{|X-Y|} \cdot X  dz \Bigg |  \leq 2 |X|\leq 2(1+8 \epsilon)
 \end{eqnarray*}
  \begin{eqnarray*}
\Bigg | \int_{ \{Z \in D | \measuredangle(XZY)= \pi/2 \}} \frac{ Z -(X+Y)/2}{|Z -(X+Y)/2|} \cdot X dz \Bigg | \leq 2 \pi |X| \leq 2 \pi (1+8 \epsilon)
 \end{eqnarray*}


  \end{enumerate}


   The preceding estimates imply that $  \int_D \int_D  \frac{\partial}{\partial r} f(X,y,z) d y  \,d z$ is uniformly H\"older continuous in $X$ when $X$ is near the boundary of the disk. To see this, observe that if we move $X$ by no more than $8 \epsilon$, the inner integral is H\"older continuous, except for a set of size $O(\epsilon^{1/2})$. Since the inner integral is uniformly bounded, the integral on the bad set is also $O(\epsilon^{1/2})$. As such, the change in the total integral is $O(\epsilon^{1/2})$. 
   
   \subsection{Proving that $(10)$ is $O(\epsilon^{1/2})$}
   
We now consider $(10)$ and its corresponding terms. We show the following three estimates:
 
 \begin{enumerate}  
 \item \begin{eqnarray*}
  \Bigg | \int_S \int_{ \{ Z \in S | \measuredangle(YXZ)= \pi/2\} } \frac{(X-Y)}{|X-Y|} \cdot X dZ  \,d Y& \\ 
-\int_S \int_{ \{ Z \in D | \measuredangle(YXZ)= \pi/2\} } \frac{(X-Y)}{|X-Y|} \cdot X dZ  \,d Y \Bigg |  &< C \epsilon^{1/2}
  \end{eqnarray*}
 \item \begin{eqnarray*}
  \Bigg | \int_S \int_{ \{ Z \in S | \measuredangle(XYZ)= \pi/2\} } \frac{(X-Y)}{|X-Y|} \cdot X dZ  \,d Y& \\ 
-\int_S \int_{ \{ Z \in D | \measuredangle(XYZ)= \pi/2\} } \frac{(X-Y)}{|X-Y|} \cdot X dZ  \,d Y \Bigg |  &< C \epsilon^{1/2}
  \end{eqnarray*}
  \item  \begin{eqnarray*}
 \Bigg | \int_S \int_{ \{ Z \in S | \measuredangle(XZY)= \pi/2\} }  \frac{ Z -(X+Y)/2}{|Z -(X+Y)/2|} \cdot X  dZ  \,d Y& \\ 
-\int_S \int_{ \{ Z \in D | \measuredangle(XZY)= \pi/2\} }  \frac{ Z -(X+Y)/2}{|Z -(X+Y)/2|} \cdot X dZ  \,d Y \Bigg |  &< C \epsilon^{1/2}
  \end{eqnarray*}
   \end{enumerate}

To do this, we use the triangle inequality again.

    \begin{eqnarray*}
 & \Bigg | \int_S \int_{ \{ Z \in S | \measuredangle(YXZ)= \pi/2\} } \frac{(X-Y)}{|X-Y|} \cdot X dZ  \,d Y \\ 
&-\int_S \int_{ \{ Z \in D | \measuredangle(YXZ)= \pi/2\} } \frac{(X-Y)}{|X-Y|} \cdot X dZ  \,d Y \Bigg |  \\
\leq & \Bigg | \int_S \int_{ \{ Z \in S \backslash D | \measuredangle(YXZ)= \pi/2\} } \frac{(X-Y)}{|X-Y|} \cdot X dZ  \,d Y \Bigg | \\ 
+& \Bigg | \int_S \int_{ \{ Z \in D \backslash S | \measuredangle(YXZ)= \pi/2\} } \frac{(X-Y)}{|X-Y|} \cdot X dZ  \,d Y \Bigg |
  \end{eqnarray*}
   
We now want to show that both of these terms are $O(\epsilon^{1/2})$. To do so, we use the same procedure as before. We first establish a H\"older estimate in terms of $X$ away from a small set of $Y$ values and then bound the size of the small set. Finally, we obtain a uniform estimate on the innermost integral. For the H\"older continuity away from a small set, we use the following transversality estimate. For conciseness, we omit the proof, which is a straightforward estimate using Euclidean geometry.

\begin{lemma}[Transversality estimate]
Suppose $\gamma_1$ and $\gamma_2$ are circles (or lines) which intersect transversally at an angle $\phi$. Suppose further that the curvatures of both $\gamma_1$ and $\gamma_2$ are less than some $K$. Then, given $\epsilon$ satisfying $\epsilon < \frac{1}{4K}$ and $\epsilon < \frac{\sin(\phi)}{4}$, then the length of the set $\{ x \in \gamma_1 | d(x, \gamma_2)< \epsilon \} < 8 \frac{\epsilon}{\sin (\phi)}$.
\end{lemma}


This lemma is useful because it allows us to bound the lengths of the loci of right triangles in $S \Delta D$. For instance, we can bound the length of the set $\{Z \in  S \Delta D | \measuredangle(XZY)= \pi/2 \}$ by the length of the set $\{Z | \measuredangle(XZY)= \pi/2 \textrm{ and } d(Z, D) < 8 \epsilon \}$. 

Explicitly, this lemma shows that if $\{Z | \measuredangle(XZY)= \pi/2 \} $ intersects $\partial D$ at an angle $\phi$, then so long as the inverse curvatures of $D$ and $\{Z \in D | \measuredangle(XZY)= \pi/2 \} $ and $\sin(\phi)$ are much larger than $\epsilon$, then we have the following estimate: $$ l(\{Z | \measuredangle(XZY)= \pi/2 \textrm{ and } d(Z, D) < 8 \epsilon \} ) < 64 \frac{\epsilon}{\sin (\phi)}$$
This implies that $l(\{Z \in  S \Delta D | \measuredangle(XZY)= \pi/2 \}) < 64 \frac{\epsilon}{\sin (\phi)}$.

Similarly, if $\{Z | \measuredangle(XYZ)= \pi/2 \} $ intersects $\partial D$ at an angle $\phi$, then so long as $\sin(\phi)$ is much larger than $\epsilon$, then we have the same estimate, and so $l(\{Z \in  S \Delta D | \measuredangle(XYZ)= \pi/2 \}) < 64 \frac{\epsilon}{\sin (\phi)}$. The argument for the $\{Z \in S \Delta D | \measuredangle(ZXY)= \pi/2 \} $ is then exactly the same.

 So long as the intersection angles are not too small and the curvature of $\{Z \in D | \measuredangle(XZY)= \pi/2 \} $ is not too large, this estimate shows that deforming from $D$ to $S$ does not change the integral  $\int_{S}  \frac{\partial}{\partial r} f(X,Y,z) dz$ much.
 A good strategy for this step is to consider curves which intersect at an angle $\phi> \epsilon^{1/2}$ as sufficiently transverse. This will give strong enough estimates to obtain uniform H\"older continuity. Meanwhile, the compliment of this set has size $O(\epsilon^{1/2})$.



This transversality estimate will encounter problems in three cases. 

\begin{enumerate}
\item The first issue occurs when $Y$ is close to the line $y=0$, in which case the set $\{Z \in S | \measuredangle(XZY)= \pi/2 \} $ is nearly tangent to $\partial D$ at their intersection, so the length of $\{Z \in S | \measuredangle(XZY)= \pi/2 \} $ might differ greatly from $\{Z \in D | \measuredangle(XZY)= \pi/2 \} $. We discard the set of $Y$'s with $y$ coordinate smaller that $\epsilon^{1/2}$, which has measure at most 2$\epsilon^{1/2}.$ Outside of this set, the length of the circle in $S \Delta D$ is $O(\epsilon^{1/2}).$
\item Another problem occurs in a small segment containing the point $-X$, in which case the line $\{ Z \in D | \measuredangle(XYZ)= \pi/2 \}$ is nearly tangent to the disk at $-X$. The strategy to circumvent this is exactly the same as in the previous step, where we discarded the set of $Y$'s so that $|(X-Y)\cdot X| < \epsilon^{1/2}$.
\item The final problem occurs when $|X-Y|$ is small, as the curvature of $\{Z \in D | \measuredangle(XZY)= \pi/2 \} $ blows up. However, the set where $|X-Y|<\epsilon^{1/2}$ has size $O(\epsilon)$, and the set $\{Z \in S | \measuredangle(XZY)= \pi/2 \} $ has length less than $2 \pi \epsilon^{1/2}$. Outside of this set, we can use the transversality estimate to obtain H\"older continuity. 
\end{enumerate}

To finish the proof, we establish the following uniform estimate.
\begin{flalign*} \Bigg | \int_{ \{ Z \in S | \measuredangle(YXZ)= \pi/2\} } \frac{(X-Y)}{|X-Y|} \cdot X dz \Bigg |  \\
   + \Bigg |  \int_{ \{ Z \in S | \measuredangle(XYZ)= \pi/2 \} } \frac{(X-Y)}{|X-Y|} \cdot Xdz  \Bigg | \\
   +  \Bigg |  \int_{ \{Z \in S | \measuredangle(XZY)= \pi/2 \}} \frac{ Z -(X+Y)/2}{|Z -(X+Y)/2|} \cdot X dz \Bigg |  \\
   \leq 2 \pi (1+ 8 \epsilon) + 4 (1+ 8 \epsilon) 
\end{flalign*}

   \subsection{Proving that $(12)$ is $O(\epsilon^{1/2})$}

In fact, we can prove a stronger estimate and show $(12)$ is $O(\epsilon)$. To do so, we must bound the following terms:

$$\int_{S \Delta D} \int_{ \{Z \in D | \measuredangle(\bar XZY)= \pi/2 \}}  \frac{ Z -(\bar X+Y)/2}{|Z -(\bar X+Y)/2|} \cdot \bar X dZ \, dY $$
$$\int_{S \Delta D} \int_{ \{ Z \in D | \measuredangle(Y \bar X Z)= \pi/2\} } \frac{(\bar X-Y)}{|\bar X-Y|} \cdot \bar X dZ  \,d Y $$
$$\int_{S \Delta D} \int_{ \{ Z \in D | \measuredangle(\bar X Y Z)= \pi/2\} } \frac{(\bar X-Y)}{|\bar X-Y|} \cdot \bar X dZ  \,d Y. $$

This term is quite a bit easier to estimate than the previous ones. For a fixed $\bar X \in \partial D$, for any $Y \in S \Delta D$, the length of the set of right triangles in $D$ with vertices $X$ and $Y$ is at most $4+2 \pi$, (twice the diameter plus the circumference). Therefore, the inner integrals have a uniform estimate. Since $S \Delta D$ is contained entirely in the annulus of outer radius $1+8 \epsilon$ and inner radius $1-8 \epsilon$, $Vol(S \Delta D) \leq 32 \pi \epsilon$, and hence $(12)$ is bounded by  $32 \pi (4+2 \pi) \epsilon$.

\subsection{Finishing the proof of Lemma 15}

From the previous three estimates in this lemma, we have shown that $ \frac{\partial}{\partial r} \int_S \int_S  f(X,Y,Z) |X| d Y  \,d Z$ is H\"older continuous in $X$ and $S$. 

Integrating with respect to $\theta$ from $0$ to $2 \pi$, this implies that
$$\int_0^{2 \pi} \int_S \int_S  \frac{\partial}{\partial r} f(r(\theta_1) e^{i \theta_1},y,z) r(\theta_1) (\mu(\theta_1))^2 \, dy \,d z \, d \theta_1$$
 is H\"older continuous in $S$, and so the proof of the lemma is complete.

\end{proof}

\subsection{Lipschitz continuity of the arc-length term}

In this section, we prove Lemma 16, which we restate here. We refer to this as the arc-length term because it corresponds to the change in arc-length as the region evolves along the canonical homotopy.

\begin{lemma*} There exist uniform constants $ e, C >0$ so that whenever $\epsilon < e$ and $d_{H.}(S,D) \leq \epsilon$, 
\begin{eqnarray*}
 \Bigg | \int_0^{2 \pi} \int_S \int_S  f(r(\theta_1, t) e^{i \theta_1},y,z) \mu(\theta_1)^2 \, dy \,d z \, d \theta_1 -
  \int_0^{2 \pi} \int_D \int_D  f(e^{i \theta_1},y,z) \mu(\theta_1)^2 \, dy \,d z \, d \theta_1 \Bigg | < C \epsilon
  \end{eqnarray*}
\end{lemma*}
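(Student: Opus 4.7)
The plan is to extract the common factor $\mu(\theta_1)^2$ and reduce the claim to a uniform pointwise estimate on the inner double integral. Define
$$G(X, \Omega) := \int_\Omega \int_\Omega f(X,y,z) \, dy \, dz,$$
the mass of acute triangles in $\Omega^2$ with first vertex pinned at $X$. Since $\mu$ is normalized along the canonical homotopy so that $\int_0^{2\pi} \mu(\theta_1)^2 \, d\theta_1 = \|\mu\|_{L^2}^2 = 1$, the claimed bound will follow from a uniform pointwise estimate
$$\sup_{\theta_1 \in [0, 2\pi]} \left| G(r(\theta_1,t) e^{i\theta_1}, S) - G(e^{i\theta_1}, D) \right| \leq C \epsilon,$$
by multiplying against the non-negative weight $\mu(\theta_1)^2$ and integrating. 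Note that because $\mu$ is controlled only in $L^2$ (not in $L^\infty$), it is crucial that this inner estimate be pointwise uniform in $\theta_1$; happily, the integrand is uniformly bounded in $\theta_1$ as required.

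For the pointwise bound, I would set $X := r(\theta_1,t)e^{i\theta_1}$ and split via the triangle inequality:
$$|G(X, S) - G(e^{i\theta_1}, D)| \leq |G(X, S) - G(X, D)| + |G(X, D) - G(e^{i\theta_1}, D)|.$$
The first, the change-of-domain piece, is bounded using $f \leq 1$ and Fubini by
$$|G(X, S) - G(X, D)| \leq \mathrm{area}(S \Delta D) \cdot (\mathrm{area}(S) + \mathrm{area}(D)) = O(\epsilon),$$
where $\mathrm{area}(S \Delta D) = O(\epsilon)$ follows from the Hausdorff bound (both $S$ and $D$ fit inside the annulus of width $O(\epsilon)$ around $\partial D$). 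The second, the change-of-basepoint piece, reduces to establishing that the map $X \mapsto G(X, D)$ is Lipschitz on a neighborhood of $\partial D$ with a uniform constant; combined with $|X - e^{i\theta_1}| = |r(\theta_1,t) - 1| \leq \epsilon$, this yields $O(\epsilon)$.

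The main technical step is thus the uniform Lipschitz bound on $X \mapsto G(X, D)$. Fortunately, this is strictly easier than the H\"older-$1/2$ estimate of Lemma 15, since we are bounding the function itself rather than its radial derivative, so no degenerate transversality issues arise. Concretely, moving $X$ by $\delta$ perturbs the boundary of the acute-triangle region $\{(y,z) \in D^2 : \triangle(X,y,z) \text{ acute}\}$ --- a finite union of smooth loci defined by the three right-angle conditions --- by at most $O(\delta)$ in the Hausdorff sense, so the symmetric difference has area $O(\delta)$ uniformly. Alternatively, one may extend the closed-form calculation that produced the formula for $A_2(\theta)$ to allow $|X| \neq 1$, yielding an explicit expression for $G(X,D)$ whose smoothness on $\{X : \bigl\lvert |X| - 1 \bigl\rvert \leq 1/2\}$ can be read off directly.

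Combining the two bounds gives the uniform pointwise estimate, and integrating against $\mu^2$ then yields the claimed global Lipschitz bound with a uniform constant $C$.
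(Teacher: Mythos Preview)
Your proposal is correct and follows essentially the same route as the paper: the same triangle-inequality split into a change-of-domain piece (bounded via $\mathrm{area}((S\times S)\Delta(D\times D))=O(\epsilon)$) and a change-of-basepoint piece (handled by a uniform Lipschitz bound on $X\mapsto G(X,D)$). The only minor difference is that the paper establishes the Lipschitz bound by writing out $\frac{d}{dt}\int_D\int_D f(X+tV,y,z)\,dy\,dz$ explicitly as a sum over the three right-angle loci and bounding it by $\pi(2\pi+4)$, which is cleaner than your Hausdorff-perturbation sketch since the integrand $\frac{(X-Y)}{|X-Y|}\cdot V$ stays bounded even when $Y$ is near $X$ and the locus geometry degenerates.
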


\begin{proof}
To do this, we use the triangle inequality:

\begin{eqnarray*}
 \Bigg | \int_0^{2 \pi} \int_S \int_S  f(r(\theta_1, t) e^{i \theta_1},y,z) \mu(\theta_1)^2 \, dy \,d z \, d \theta_1 -  \int_0^{2 \pi} \int_D \int_D  f(e^{i \theta_1},y,z) \mu(\theta_1)^2 \, dy \,d z \, d \theta_1 \Bigg | \\
   \leq  \Bigg | \int_0^{2 \pi} \int_S \int_S  f(r(\theta_1, t) e^{i \theta_1},y,z) \mu(\theta_1)^2 \, dy \,d z \, d \theta_1 -  \int_0^{2 \pi} \int_D \int_D  f(r(\theta_1, t) e^{i \theta_1},y,z) \mu(\theta_1)^2 \, dy \,d z \, d \theta_1 \Bigg | \\
  +  \, \Bigg | \int_0^{2 \pi} \int_D \int_D  f(r(\theta_1, t) e^{i \theta_1},y,z) \mu(\theta_1)^2 \, dy \,d z \, d \theta_1 -  \int_0^{2 \pi} \int_D \int_D  f(e^{i \theta_1},y,z) \mu(\theta_1)^2 \, dy \,d z \, d \theta_1 \Bigg |
  \end{eqnarray*}
  
We can estimate the first term by estimating it as a single integral over $(S \times S) \Delta (D \times D)$. Using the same estimate as in the Lipschitz estimate of $p(S)$, the volume of $(S \times S) \Delta (D \times D)$ is less than $16 \pi (1+8\epsilon)^3 \epsilon$. Furthermore, $|f(X,Y,Z)| \leq 1$, which shows the following estimate:
   $$|\int_S \int_S  f(r(\theta_1, t) e^{i \theta_1},y,z) \, dy \,d z -\int_D \int_D  f(r(\theta_1, t) e^{i \theta_1},y,z) \, dy \,d z| < C \epsilon$$
  
To estimate the second term, we consider the following: 

$$\int_D \int_D  f(r(\theta_1, t) e^{i \theta_1},y,z) \, dy \,d z $$

Note that this is the total mass of acute triangles with two vertices in a disk given a fixed third vertex (which may or may not be in the disk). We want to show that this mass is uniformly Lipschitz in the choice of fixed vertex. To see this, we can estimate $|\dt \int_D \int_D  f(\bar X+tV,y,z) \, dy \,d z|$ for any point $ \bar X$ and unit vector $V$.

We can use the triangle inequality and some estimates to bound this quantity.
\begin{eqnarray*}
\Bigg | \dt \int_D \int_D  f(\bar X+tV,y,z) \, dy \,d z \Bigg | & \leq& \Bigg | \int_D \int_{ \{Z \in D | \measuredangle(\bar XZY)= \pi/2 \}}  \frac{ Z -(\bar X+Y)/2}{|Z -(\bar X+Y)/2|} \cdot V dZ \, dY \Bigg | \\
& + & \Bigg | \int_D \int_{ \{ Z \in D | \measuredangle(Y \bar X Z)= \pi/2\} } \frac{(\bar X-Y)}{|\bar X-Y|} \cdot V dZ  \,d Y \Bigg | \\
& + & \Bigg | \int_D \int_{ \{ Z \in D | \measuredangle(\bar X Y Z)= \pi/2\} } \frac{(\bar X-Y)}{|\bar X-Y|} \cdot VdZ  \,d Y \Bigg | \\
& \leq& \Bigg | \int_D \int_{ \{Z \in D | \measuredangle(\bar XZY)= \pi/2 \}} 1~ dZ \, dY \Bigg | \\
& + & \Bigg | \int_D \int_{ \{ Z \in D | \measuredangle(Y \bar X Z)= \pi/2\} } 1~ dZ  \,d Y \Bigg | \\
& + & \Bigg | \int_D \int_{ \{ Z \in D | \measuredangle(\bar X Y Z)= \pi/2\} } 1~ dZ  \,d Y \Bigg |
  \end{eqnarray*}
 We can geometrically bound each of these.
\begin{eqnarray*}
\Bigg | \dt \int_D \int_D  f(X+tV,y,z) \, dy \,d z| & \leq& | \int_D 2 \pi \, dY |+ | \int_D 2  \,d Y|  +  |\int_D 2 \,d Y \Bigg | \\
& =& \pi (2\pi+4) \\
  \end{eqnarray*}

This shows that the inner two integrals are uniformly Lipschitz. Integrating with respect to $\theta_1$, this implies that for $\epsilon$ small, we can find a uniform $C$ so that
\[ \Bigg | \int_0^{2 \pi} \int_D \int_D  f(r(\theta_1, t) e^{i \theta_1},y,z) \mu(\theta_1)^2 \, dy \,d z \, d \theta_1 -  \int_0^{2 \pi} \int_D \int_D  f(e^{i \theta_1},y,z) \mu(\theta_1)^2 \, dy \,d z \, d \theta_1 \Bigg | < C \epsilon. \]

This completes the proof of Lemma 16.

\end{proof}

Since we have proven Lemmas 14-16, we have established the H\"older-$1/2$ continuity for each of the terms in $\frac{d^2}{dt^2} M$. This completes the proof of Theorem 3.

\end{proof}

\end{document}